\documentclass[10pt]{amsart}

\usepackage{geometry,enumitem}
\usepackage{hyperref}
\usepackage{graphicx}
\usepackage[usenames, dvipsnames]{color}
\usepackage{amsthm, amsxtra}

\newenvironment{myitemize}
{ \begin{itemize}
    \setlength{\itemsep}{0pt}
    \setlength{\parskip}{0pt}
    \setlength{\parsep}{0pt} 
     \setlength{\itemindent}{-20pt} }     
{ \end{itemize}                  }

\numberwithin{equation}{section}

\newtheorem{theorem}{Theorem}[section]
\newtheorem{theorem*}{Theorem}
\newtheorem{remark}[theorem]{Remark}
\newtheorem{lemma}[theorem]{Lemma}

\newtheorem{proposition}[theorem]{Proposition}
\newtheorem{corollary}[theorem]{Corollary}

\newtheorem*{question*}{Question}

\newtheorem{example}[theorem]{Example}

\newtheorem{definition}[theorem]{Definition}

\newcommand{\nn}{\nonumber}
\newcommand{\R}{\mathbb{R}}
\newcommand{\E}{\mathbb{E}}

\geometry{
 total={165mm,220mm},
 left=25mm,
 top=35mm,
 }

\author{Nassif Ghoussoub, Young-Heon Kim and Aaron Zeff Palmer}

\address{* Nassif Ghoussoub, Young-Heon Kim, and Aaron Zeff Palmer}
\address{Department of Mathematics\\ University of British Columbia\\ Vancouver, V6T 1Z2 Canada}
\email{nassif@math.ubc.ca, yhkim@math.ubc.ca,  azp@math.ubc.ca}

\title[Optimal Stopping of Stochastic Transport]{Optimal Stopping of Stochastic Transport Minimizing Submartingale Costs}

\thanks{The  first two 
authors are partially supported by  the 
Natural Sciences and Engineering Research Council of Canada (NSERC). \\
\copyright 2020 by the author.}

\date{\today}

\begin{document}

\begin{abstract}
	Given a stochastic state process $(X_t)_t$ and a real-valued submartingale cost process $(S_t)_t$, we characterize optimal stopping times $\tau$ that minimize the expectation of $S_\tau$ while realizing given initial and target distributions $\mu$ and $\nu$, i.e., $X_0\sim \mu$ and $X_\tau \sim \nu$. A dual  optimization problem is considered and shown to be attained under suitable conditions.  The optimal solution of the dual problem then provides a contact set, which characterizes the location where optimal stopping can occur. The optimal stopping time is uniquely determined as the first hitting time of this contact set provided we assume a natural structural assumption on the pair $(X_t, S_t)_t$, which generalizes  the {\it twist condition} on the cost in optimal transport theory.  This paper extends the Brownian motion settings studied in \cite{GKPS,GKP-Monge} and deals with more general costs. 
\end{abstract}

\maketitle
\tableofcontents

\section{Introduction} \label{sec:introduction}

	Given a state process $(X_t)_t$ valued in a complete metric space $O$, an initial distribution $\mu$, and a target distribution $\nu$ on $O$, we consider the set $\mathcal{T}(\mu,\nu)$ of -possibly randomized- stopping times $\tau$ that satisfy 
				\begin{align*} 
		X_0\sim \mu\ \ \ {\rm and}\ \ \ X_\tau\sim\nu, 
			\end{align*}
where here, and in the sequel, the notation $Y\sim \lambda$ means that the law of the random variable $Y$ is the probability measure $\lambda$.  The problem of finding such stopping times (i.e., when $\mathcal{T}(\mu,\nu)$ is non-empty) is known as the {\it Skorokhod embedding problem} and has a long history ever since it was initiated by Skorokhod \cite{skorokhod1965studies} in the early 1960s in the case where $(X_t)_t$ is Brownian motion $(W_t)_t$ and $O=\R$, and followed by important contributions from Root \cite{root1969existence}, Rost \cite{rost1971stopping}, Chacon-Walsh \cite{chacon-walsh76} and others. In this case, the set $\mathcal{T}(\mu,\nu)$ of embedding stopping times is empty unless $\mu$ and $\nu$ satisfies a certain order:
	\begin{align*}
 		\mu \prec \nu \quad {\rm 
	that\, is}\quad  
		\int_{\R} \phi(x)\mu(dx) \le \int_{\R} \phi(y) \nu(dy),\ \hbox{\rm for all subharmonic functions } \phi \hbox{ ($\Delta \phi \ge 0$)}. 
	\end{align*}
	This condition is indeed sufficient and illustrates a duality principle of embedding stopping times, which is given here as Corollary \ref{cor:Strassen} to the duality Theorem \ref{thm:weak_duality} (see \cite{beiglboeck2017optimal}   as well as \cite{GKL-radial}). Since then, the problem and its variants were investigated by a large number of researchers, and have  led to several important results in  
probability theory and stochastic processes. We refer to Ob\l oj \cite{obloj2004skorokhod} for an excellent survey of the subject.   
	
For our present purpose, we note that no special role is played here by $O=\R$ or by $(X_t)_t$ being Brownian motion, and the Skorokhod embedding theorem was eventually extended to more general state spaces and Markov processes (see the previously related works of Baxter-Chacon \cite{baxter1976stopping}, Falkner \cite{falkner1980skorohod},  Strassen \cite{strassen1965existence}, Rost \cite{rost1971stopping}, Dellacherie-Meyer \cite{Dellacherie-Meyer-78}, etc). For example, the result holds generally when $\Delta$ is the infinitesimal generator of a diffusion process $(X_t)_t$.  Our analysis here will distinguish between two cases: processes that are absorbed into a `cemetery' state, and those that are ergodic.  We shall detail the absorbing case in the main body of the paper and repeat the results for ergodic processes in Appendix \ref{sec:recurrent}.

Given now a real valued cost process $(S_t)_t$, {\it the optimal Skorokhod embedding problem} is to minimize the expected cost over all such embedding stopping times, cf.\ \cite{beiglboeck2017optimal},
	\begin{align}\label{eqn:primal}
		\mathcal{P}_S(\mu,\nu): = \inf_{\tau}\Big\{\mathbb{E}\big[S_\tau\big];\ X_0\sim \mu,\ X_\tau\sim \nu\Big\}.
	\end{align}

	The optimization problem \eqref{eqn:primal} and its variants have been considered in mathematical finance, for example, with applications to option-pricing by Hobson \cite{hobson2011skorokhod}, Beiglb\"{o}ck and Juillet \cite{beiglboeck-juillet2016}, Ghoussoub-Kim-Lim \cite{ghoussoub2015structure, GKL-radial}, Beiglb\"{o}ck-Cox-Huesmann \cite{beiglboeck2017optimal}.  Many applications consider processes beyond Brownian motion, and the cost processes possessing a variety of structural properties.  A starting point for our work is that the cost should be a submartingale, in other words, a process that increases in conditional expectation for each increment. There are two particular cases of submartingale costs that have already been analyzed when the state process $X_t=W_t$ is a multi-dimensional Brownian motion:
		\begin{myitemize}
			\item ${ S}_t = \int_0^t {  L}(s,W_s)ds$,  where the Lagrangian $L$ is non-negative was analyzed in \cite{GKPS,GKP-Monge};
			\item ${ S}_t = { c}(W_0,W_t)$  where $y\to c(x, y)$ is subharmonic, i.e., $\Delta_yc(x,y) \geq 0$, which was analyzed in \cite{GKP-Monge} 
		\end{myitemize}
Note that the first case is Markovian, while the second is not though it depends only on initial/final position. 

The dual problem to $\mathcal{P}_S(\mu, \nu)$ has been expressed in \cite{beiglboeck2017optimal}, as
	\begin{align}\label{eqn:dual}
		\mathcal{D}_S(\mu,\nu) = \sup_{(\psi,(M_t)_t)\in \mathcal{A}_S}\Big\{\int_{{O}}\psi(y)\nu(dy) - \mathbb{E}^{\mathbb{P}^\mu}\big[M_0\big]\Big\},
	\end{align}
	where $\mathbb{P}^\mu$ is the distribution of $(X_t)_t$ with initial $X_0\sim \mu$, and $\mathcal{A}_S$ consists of the end potential, $\psi:O\rightarrow \R$, and a martingale $(M_t)_t$ that satisfies $M_\sigma\geq \psi(X_\sigma)- S_\sigma$ for any stopping time $\sigma$ almost surely on the probability space.  

	In general, the dual maximization problem can be reduced to a maximization of $\psi$, as $M_0$ can be determined from $\psi$ using the Snell envelope of $(\psi(X_t)-S_t)_t$, which we denote by $(G^\psi_t)_t$.  In other words, $G^\psi_t$ is the conditional expected value of the optimal stopping problem that maximizes $\psi(X_\sigma)-S_\sigma$ over stopping times $\sigma\geq t$.  The martingale $(M^\psi_t)_t$ such that $(\psi,(M^\psi_t)_t)\in \mathcal{A}_S$ can be recovered from $(G^\psi_t)_t$ by the Doob-Meyer decomposition. These results are covered by Lemma \ref{lem:value_process} and Theorem \ref{thm:weak_duality}.

	The optimizer $\psi$ of the dual problem characterizes an optimal embedding stopping time, $\tau\in \mathcal{T}(\mu,\nu)$, by the following `verification' principles of Theorem \ref{thm:verification}:
	\begin{enumerate}[label=\roman*.]
		\item The value process does not change predictably before $\tau$ ($(G_{t\wedge \tau}^\psi)_t$ is a martingale);
		\item The terminal value is given by the end potential minus the cost ($G^\psi_{\tau}=\psi(X_{\tau})-S_{\tau}$).
	\end{enumerate}

	The first focus of this paper is on the attainment of the dual maximization problems, $\mathcal{D}_{S}(\mu, \nu)$, which so far has been elusive. In one-dimension, dual attainment has been achieved for Brownian motion in \cite{beiglbock2017complete} and \cite{HLT2015}, and has been extended to the multi-dimensional case in \cite{GKPS} and \cite{GKP-Monge}.  We also note an alternate approach to attainment has been undertaken by weakening the dual formulation in \cite{beiglbock2019fine}. Our analysis identifies natural structural situations that yield bounds, hence compactness in suitable function spaces, on the end potential $\psi$. Some structure is required, since if the cost is (the supermartingale!) $S_t = -|W_0-W_t|$, then the dual maximizer $\psi$ does not exist \cite{beiglboeck-juillet2016} (see also \cite{ghoussoub2015structure} for related results). 

	In \S \ref{sec:pointwise_bounds} we prove bounds on $\psi$ under the following set of assumptions:
	\begin{myitemize}
		\item The state process $(X_t)_t$ is a stationary Feller process with an absorbing state $\mathfrak{C}\in O$;
		\item The initial and target distribution are in the order prescribed by $(X_t)_t$, $\mu \prec \nu$, i.e.\
		$$
			\int_{O} \phi(x)\mu(dx)\leq \int_O \phi(y)\nu(dy), \mbox{whenever}\ \phi \ \mbox{is $X$-subharmonic (or $\Delta \phi(x)\geq 0$)},
		$$
		where we abuse the notation and let $\Delta$ denote the generator of $X$.
		\item  $\mathbb{E}[\tau_\mathfrak{C}]<\infty$ for $\tau_\mathfrak{C} = \inf\{ t \ | \ X_t = \mathfrak{C} \}$ and $S_{\tau}=S_{\tau_\mathfrak{C}}$ for $\tau\geq \tau_\mathfrak{C}$.  (This is naturally satisfied when the process is either killed exponentially or by an absorbed state).
		\item The cost process $(S_t)_t$ is a submartingale with $S_0=0$ and $S_{\tau_\mathfrak{C}}\leq K<+\infty$.
	\end{myitemize} 
	Proposition \ref{prop:psi_normalization} provides the essential normalization for absorbing processes, which shows that the end potential $\psi$ can be restricted to values in $[-K,0]$.  This procedure also restricts $\psi$ to have $0$ value on $\mathfrak{C}$ and any absorbing state of the Markov process.
These bounds are sufficient for dual attainment if $O$ is discrete (Theorem \ref{thm:finite_attainment}).  To illustrate the generality of our approach, we shall also handle the case where  the time is discrete.

In \S \ref{sec:dual_attainment}, we aim for a more refined bound and make the following assumptions:
	\begin{myitemize}
		\item The state process $(X_t)_t$ is symmetric with Dirichlet form $\mathcal{E}$ that satisfies a Poincar\'{e} inequality, 
		$$ 
			\|f\|_2^2:= \int_O |f(x)|^2m(dx) \leq 
C_p\|f\|_{\mathcal{H}}^2, 
		$$
		where $\|f\|_{\mathcal{H}}^2:=\mathcal{E}(f,f)=\int_O f(x)\big(-\Delta f(x)\big)m(dx)$.
	\item We also assume $\mathcal{E}$ satisfies a regularity property that makes the viscosity and variational formulations of supersolutions equivalent.
		\item The initial distribution $\mu$ lies in the dual space $\mathcal{H}^*$;
		\item The cost process $(S_t)_t$ is such that $(Dt-S_t)_t$ is also a submartingale for some $D>0$. In other words, 
		$$
			\mathbb{E}\big[S_r|\mathcal{F}_t]-S_t\leq D(r-t)\,\hbox{ for $r>t$.}
		$$
			\end{myitemize}

	We then obtain a uniform superharmonic estimate on the end potentials of the form
	$$
		\Delta \psi(x) \leq D \quad \hbox{for all $x\in O$},
	$$
	which translates to a uniform bound of $\|\psi\|_\mathcal{H}$,
	by combining Lemma \ref{lem:psi_maximization} with viscosity and weak solution theory in Proposition \ref{prop:dual_normalization}, and prove semi-continuity of the dual value with respect to the topology of $\mathcal{B}_D$ in Proposition \ref{prop:compact_semicontinuous}.
	 With these results, we prove attainment of optimal $\psi^*$ in the Hilbert space.

	 For ergodic processes the situation is different. The dual potentials are no longer bounded above and are no longer limited to the Hilbert space $\mathcal{H}$.  We still however prove dual attainment in a suitable space $\mathcal{B}_D'$, where the truncated potentials $\min\{\psi(x),M\}$ live in the Hilbert space for each $M>0$; see Theorem \ref{thm:strong_dual_recurrent}.

	 The differences between the absorbing and ergodic processes can be better understood by examining the case of minimizing the expectation of embedding stopping times.  In the absorbing case,  the expectation of the stopping time is determined solely by $\mu$ and $\nu$ as long as it has finite expectation (Proposition \ref{prop:expected_time_transient}). For ergodic processes, the minimum expected time is given by a remarkable duality; see Theorem \ref{thm:ergodic_duality}.  The dual is obtained as the potential of a point mass (which does not belong to the underlying Hilbert space), which characterizes the optimal stopping times by requiring that their local time at that point is zero (Corollary \ref{cor:Necessary_sufficient}).

	Finally, a novel result of \cite{GKP-Monge} was to identify a {\it stochastic twist condition} on cost processes of the form  $S_t=c(X_0,X_t)$, which guarantees that the optimal stopping time is a hitting time of a barrier in the product space of the current and initial position.  We provide in Section~\ref{sec:generalTwist}  a new and more general twist condition on the pair $(S_t, X_t)$ to consolidate these previous results.  We suppose that the cost decomposes as $S_t=\Lambda(A_t,X_t)$, where $(A,X)$ is a $U\times O$-valued stationary Feller process, $U$ being an auxiliary differentiable manifold, and $\Lambda: U\times O \to \R$ is measurable and differentiable in the first variable.  Then, we say that the cost is {\it $(A,X)$-twisted} if a stopping time $\sigma$  is $0$ whenever
	$$
		\mathbb{E}^{a,x}\big[\nabla_a \Lambda(A_\sigma,X_\sigma)\big]=\nabla_a\Lambda (a,x)\quad \hbox{for any $a\in U$ and any $x\in O$}.	$$
We note how the previously known examples fall into this class. For example the Root and Rost embeddings are optimizers if $A_t=t$ and the cost is an increasing, strictly convex, or concave function of time.  The Monge costs considered in \cite{GKP-Monge}, and the stochastic twist condition considered there also fits in this context with $A_t=X_0$.  Further generalizations of the Root embedding of \cite{gassiat2019free} can also be considered here.  We shall prove that a unique optimizer is then given by the hitting time of a barrier in the space $U\times O$, given an additional regularity assumption on the processes and possibly $\mu$ and $\nu$.  Under these assumptions, we have that the stopping time is unique and given by the hitting time of a set in the product space of $U\times O$, which is determined by the dual problem.

\section{Weak Duality and Dynamic Programming}\label{sec:duality_dynamic_programming}

\subsection{Notation and Definitions}
	We mostly follow the general formulation of \cite{Guo-Tan-Touzi-2016monotone} and introduce more detailed assumptions later. 
	We let $\R^+$ be the nonnegative real numbers, $O$ be complete metric space, and  $O_\mathfrak{C}=O\cup \{\mathfrak{C}\}$ be the space with a cemetery state $\mathfrak{C}$ that is distance 1 from all other points. We suppose $(\Omega,\mathcal{F},(\mathcal{F}_t)_{t\in \R^+},\mathbb{P}^\mu)=(\Omega,\mathbb{F},\mathbb{P}^\mu)$ is a filtered probability space with $\Omega$ a Polish space and $\mathbb{P}^\mu$ a Borel measure such that $X:\Omega \rightarrow D(\R^+;O_\mathfrak{C})$ is a continuous map onto the Skorokhod space of c\'{a}dl\'{a}g paths equipped with the Skorokhod path space metric, i.e.\ the paths $(X_t(\omega))_{t\in \R^+}$ are continuous from the right with limits from the left.
	We suppose the filtration $\mathbb{F}$ is right continuous and $\mathcal{F}_0$ contains the null sets of $\mathbb{P}^\mu$.
	  We suppose that $X$ is adapted to $\mathbb{F}$.  

	  For a probability measure $\gamma$ on a Polish space $A$ and $\gamma$-integrable $f$, we use the notation
	$$
		\mathbb{E}^\gamma[f]=\int_A f(a)\gamma(da)
	$$
	and for a $\sigma$-algebra $\Sigma$ contained in the $\sigma$-algebra of $\gamma$-measurable sets, we take the conditional expectation
	$
		\mathbb{E}^\gamma[f|\Sigma]
	$
	to be the unique $\Sigma$-measurable function on $A$ such that
	$$
		\int_B\mathbb{E}^\gamma[f|\Sigma](a)\gamma(da)=\int_Bf(a)\gamma(da)
	$$
	for all sets $B\in \Sigma$.

	We let 
	$$\tau_\mathfrak{C} = \inf\{t; X_t = \mathfrak{C}\}$$
	denote the killing time of the process. We suppose that $\mathbb{E}^{\mathbb{P}^\mu}[\tau_\mathfrak{C}]<\infty$, and for any $\tau\in \R^+$,  $X_{\tau}= \mathfrak{C}$ holds for $\mathbb{P}^\mu$-a.e.\ $\omega$ such that $\tau\geq \tau_\mathfrak{C}(\omega)$, i.e.\ the path remains in the cemetery state after being killed with probability one.  We let	
\begin{align*}
 \hbox{	$\mathcal{S}$ denote the set of (nonrandomized) stopping times }
\end{align*}
and suppose also that for every open set $Q\subset O$ there exists $\sigma\in \mathcal{S}$ such that 
	$$
		\E^{\mathbb{P}^\mu}\big[\mathbf{1}\{X_\sigma\in Q\}\big]>0.
	$$
	We suppose the source distribution, $\mu$ a Borel probability measure on $O$, is the initial marginal of $\mathbb{P}^\mu$, i.e.\ 
	$$\mu = {X_0}_\# \mathbb{P}^\mu.$$
	We consider a target distribution $\nu$, a Borel probability measure on $O_\mathfrak{C}$, and the set of randomized stopping times $\mathcal{T}(\mu,\nu)$ that embed $\nu$ into the process $X$.  These correspond to measures $\bar{\mathbb{P}}$ on $\bar{\Omega}=\R^+\times \Omega$ (we let $\bar{\omega}=(\tau,\omega)$, $T(\bar{\omega})=\tau$, and $X_T(\bar{\omega})=X_{\tau}(\omega)$, and $\bar{\mathbb{F}}$ be the extended filtration with $\bar{\mathcal{F}}_t$ containing $B\times \Omega$ for Borel subsets $B\subset [0,t]$), such that 
	\begin{enumerate}[label=\roman*.]
		\item 
		 ${\pi^\Omega}_\#\bar{\mathbb{P}}=\mathbb{P}^\mu$ for the projection $\pi^\Omega(\bar{\omega})=\omega$; 
		\item    The process $\sigma\mapsto g(X_{\sigma})$  is uniformly integrable over all stopping times $\sigma\in \mathcal{S}$ for all $g\in C_b(O_\mathfrak{C})$.
		\item We assume that $T\leq \tau_\mathfrak{C}$ holds with probability 1.
 		\item ${X_T}_\# \bar{\mathbb{P}} = \nu$, equivalently $X_T\sim_{\bar{\mathbb{P}}} \nu$. 
	\end{enumerate}
	We will always consider the topology given by weak* convergence of the distribution of $T$ on $\R^+$ and the distribution of $X_T$ on $O_\mathfrak{C}$, for which $\mathcal{T}(\mu,\nu)$ is compact \cite{baxter1977compactness}.

	We also denote by $\mathcal{T}(\mu)$ the randomized stopping times with initial distribution $\mu$ and free stopping distribution, i.e.\ satisfying i., ii.,\ and iii.\ but not necessarily iv., and $\mathcal{T}_t(\mu)$ the randomized stopping times in $\mathcal{T}(\mu)$ with $T\geq t\wedge \tau_\mathfrak{C}$ almost surely. 

	We let $LSC_b(O_\mathfrak{C})$ be the set of bounded lower-semicontinuous functions on $O_\mathfrak{C}$, and we let 
\begin{itemize}
\item $B(\bar{\Omega})$ to be the processes that are jointly measurable with the Borel $\sigma$ algebra on $\R^+$ and the Borel $\sigma$ algebra completed with $\mathbb{P}^\mu$ null sets on $\Omega$, and uniformly integrable with respect to $\mathbb{P^\mu}$ for all stopping times. 

\end{itemize}

	We always assume the cost process $S\in B(\bar{\Omega})$  is adapted to the filtration  $\mathcal{\bar F}$. Even if $S$ were not adapted, it would not change the problem by replacing $S$ with its optional projection with respect to the filtration.

\begin{remark}
		All of our results can be easily adapted to discrete time, where the statements and proofs are the same with less technicalities. In Appendix {\normalfont\ref{sec:recurrent}}, we see that the uniform integrability can be easily relaxed to allow for unbounded costs.
	\end{remark}

\subsection{Dual formulation}

	We define
	\begin{align}\label{eqn:dual_admissible}
		\mathcal{A}_S = \big\{&(\psi,M)\in C_b(O_\mathfrak{C})\times B(\bar{\Omega});\\
		&\ \psi(X_\sigma)-M_\sigma\leq S_\sigma\ \forall\ \sigma\in \mathcal{S}\ {\rm and}\ \mathbb{P}^\mu-a.e.\ \omega,\nn\\  \nonumber
		&\ M\ {\rm is\ a\ }(\Omega,\mathbb{F},\mathbb{P}^\mu){\rm-martingale}\big\}.
	\end{align}
	The martingale condition is simply
	$$
		\E^{\mathbb{P}^\mu} \big[M_\sigma\big|\mathcal{F}_s\big]= M_s,\ \mathbb{P}^\mu-{\rm a.s.}
	$$
	for all stopping times $$\sigma\in \mathcal{S}_s=\big\{\sigma\in \mathcal{S};\ \sigma\geq s\big\}.$$

	The dual problem is 
	\begin{align}\label{eqn:dualproblem}
		\mathcal{D}_S(\mu,\nu) = \sup_{(\psi,M)\in \mathcal{A}_S} \Big\{\int_{O} \psi(y)\nu(dy)-\E^{\mathbb{P}^\mu}\big[M_0\big]\Big\}.
	\end{align}
	We note that if $X$ is Markov, then the set $\mathcal{A}_S$ does not depend on the initial distribution $\mu$, which will only appear in the cost of (\ref{eqn:dualproblem}).

	We consider the following assumptions to make the optimization problem well posed. First, we require lower-semicontinuity 
	\begin{enumerate} [label=\textbf{(A\arabic*)}] \setcounter{enumi}{-1}
		\item \label{itm:lower_semicontinuity} We suppose that $t\mapsto S_t$ is right-lower-semicontinuous for $\mathbb{P}^\mu$-a.e.\ $\omega$, and its predictable projection is left-lower-semicontinuous, and $S$ is bounded below.
		\item \label{itm:bounded_stopping} We suppose that $S_t=S_{\tau_\mathfrak{C}}$ for $t> \tau_\mathfrak{C}$ holds $\mathbb{P}^\mu$-a.s.\ and $S$ is class D (i.e.\ uniformly integrable over stopping times, which implies $\mathbb{E}^{\mathbb{P}^\mu}[S_{\tau_\mathfrak{C}}]<\infty$).
	\end{enumerate}

	The assumption \ref{itm:lower_semicontinuity} is made in \cite{bismut1979potential} (with upper semicontinuity) and can be considered as a lower-semicontinuous version of c\'{a}dl\'{a}g processes. The  assumption \ref{itm:bounded_stopping} is standard and includes two important general cases and restricts us to a compact set of stopping times, $T\wedge  \tau_\mathfrak{C}$.  One case is that the domain is an open set of a larger space $O\subset \bar{O}$ for which $\tau_\mathfrak{C}$ is the exit time, at which time the process enters the cemetery state $\mathfrak{C}$.   In some cases the problem on  $\bar{O}$ can be reduced to this by noting that there are states that cannot be reached with finite cost; see \cite{GKPS} where $X_t$ is Brownian motion and $O\subset \R^d$ is a bounded convex set.

	The second case is that the process is killed exponentially at rate $\beta>0$. We let $\hat{\mathbb{P}}^\mu$ be the distribution of the process on $O$ without killing. Then we have
	$$
		\mathbb{E}^{\mathbb{P}^\mu}\big[\mathbf{1}\{X_t\in O\}A_t\big] = e^{-\beta t} \mathbb{E}^{\hat{\mathbb{P}}^\mu}\big[A_t\big],
	$$
	for all processes $A_t$, and thus
	$$
		\mathbb{E}\big[\tau_\mathfrak{C}]= \int_{\R^+} t\beta e^{-\beta t} dt=\beta^{-1}<+\infty.
	$$
	The cost $S_t$  is uniformly integrable with respect to $\mathbb{P}^\mu$ if $e^{-\beta t}S_t$ is uniformly integrable with respect to $\hat{\mathbb{P}}^\mu$.
	The admissible target measures will be restricted as the constraint becomes for $f\in C_b(O_\mathfrak{C})$ with $f(\mathfrak{C})=0$,
	$$
		\mathbb{E}^{\bar{\mathbb{P}}}\Big[f\big(X_T\big)\Big]=\mathbb{E}^{\hat{\bar{\mathbb{P}}}}\Big[e^{-\beta T} f\big(X_T\big)\Big]=\int_O f(x)\nu(dx),
	$$
	and in particular if $\mu\not=\nu$ we require $\int_O \nu(dx)<1$ (equivalently, $\nu(\{\mathfrak{C}\})>0$).

	\subsection{Dynamic programming dual formulation}
	This section  employs the dynamic programming principle in a manner analogous to the double `convexification' procedure of optimal transportation.
	We first  note that the problem $\mathcal{D}_S(\mu,\nu)$ may be reduced to a concave maximization problem of $\psi$.  We
	define the value process $G^\psi$ to be the Snell envelope of $(\psi(X_t)-S_t)_{t\in \R^+}$, given by
	\begin{align}\label{eqn:value_process}
		G_t^\psi(\omega) :=&\ \sup_{\sigma\in \mathcal{S}_t}\Big\{ \mathbb{E}^{\mathbb{P}^\mu}\big[\psi(X_\sigma) - S_\sigma\big|{\mathcal{F}}_t\big](t,\omega)\Big\}\\
		=&\  \sup_{\bar{\mathbb{P}}\in \mathcal{T}_t(\mu)}\Big\{ \mathbb{E}^{\bar{\mathbb{P}}}\big[\psi(X_T) - S_T\big|\bar{\mathcal{F}}_t\big](t,\omega)\Big\}.\nn
	\end{align}
	
	Given $\psi$, we can now define $M^\psi$ by the Doob-Meyer decomposition of $G^\psi$, that is
	\begin{align}\label{eqn:Doob-Meyer}
		G_t^\psi=M_t^\psi-A_t^\psi,
	\end{align}
	where $A_t^\psi$ is an increasing process with $A_0^\psi=0$. This theory has been developed in \cite{mertens1972theorie}, for which we refer also to \cite{bismut1979potential}.  Unfortunately, the theory is stated with slightly different assumptions, so we reproduce the results we need.

	\begin{lemma}\label{lem:value_process}
		We suppose {\normalfont \ref{itm:lower_semicontinuity}} and {\normalfont\ref{itm:bounded_stopping}}. The dual problem has the equivalent expression:
		\begin{align}\label{eqn:concave_maximization}
			D_S(\mu,\nu) = \sup_{\psi\in LSC_b(O_\mathfrak{C})}\Big\{ \int_{O_\mathfrak{C}}\psi(y)\nu(dy)-\mathbb{E}^{\mathbb{P}^\mu}\big[G_0^\psi\big]\Big\}.
		\end{align}
		In particular, for every $\psi\in C_b(O_\mathfrak{C})$, we have that $(\psi,M^\psi)\in \mathcal{A}_S$, and $\mathbb{P}^\mu$ almost surely  $G_\sigma^\psi \leq M_\sigma$ for all $(\psi,M)\in \mathcal{A}_S$ and $\sigma \in \mathcal{S}$.

		Moreover, $G_\sigma^\psi=\psi(\mathfrak{C})-S_{\tau_\mathfrak{C}}$ holds $\mathbb{P}^\mu$ almost surely whenever $\sigma\geq \tau_\mathfrak{C}$.
	\end{lemma}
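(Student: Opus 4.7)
My plan is to establish the claimed reduction in two matched halves. First, given any $\psi\in C_b(O_\mathfrak{C})$, I will exhibit a martingale $M^\psi$ such that $(\psi,M^\psi)\in\mathcal{A}_S$ and $\mathbb{E}^{\mathbb{P}^\mu}[M^\psi_0]=\mathbb{E}^{\mathbb{P}^\mu}[G^\psi_0]$. Second, I will show that for every admissible pair $(\psi,M)\in\mathcal{A}_S$ one has $M_\sigma\ge G^\psi_\sigma$ a.s.\ for all $\sigma\in\mathcal{S}$, so that $M^\psi$ is the pointwise-smallest, and hence cost-smallest, admissible martingale associated to $\psi$. Combining the two inequalities makes the outer maximization over $M$ collapse to the supremum over $\psi$ alone, yielding \eqref{eqn:concave_maximization}.

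For the first half I will invoke the Mertens/Doob--Meyer decomposition. Under \ref{itm:lower_semicontinuity} and \ref{itm:bounded_stopping}, the process $t\mapsto \psi(X_t)-S_t$ is bounded above and class D. The Snell envelope $G^\psi$ defined by \eqref{eqn:value_process} is then a right-regular supermartingale of class D dominating $\psi(X_t)-S_t$, and admits a decomposition $G^\psi_t=M^\psi_t-A^\psi_t$ with $M^\psi$ a uniformly integrable martingale and $A^\psi$ an increasing process with $A^\psi_0=0$. Since $G^\psi_\sigma\ge \psi(X_\sigma)-S_\sigma$ (take $\sigma$ itself in the essential supremum of \eqref{eqn:value_process}) and $A^\psi\ge 0$, we get $M^\psi_\sigma\ge G^\psi_\sigma\ge \psi(X_\sigma)-S_\sigma$, so $(\psi,M^\psi)\in\mathcal{A}_S$, and $\mathbb{E}^{\mathbb{P}^\mu}[M^\psi_0]=\mathbb{E}^{\mathbb{P}^\mu}[G^\psi_0]$ by $A^\psi_0=0$.

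For the second half, fix $(\psi,M)\in\mathcal{A}_S$ and $\sigma\in\mathcal{S}$. For any $\sigma'\in\mathcal{S}_\sigma$, the martingale property of $M$ combined with the pointwise inequality $M_{\sigma'}\ge \psi(X_{\sigma'})-S_{\sigma'}$ yields
$$
M_\sigma=\mathbb{E}^{\mathbb{P}^\mu}[M_{\sigma'}\,|\,\mathcal{F}_\sigma]\ \ge\ \mathbb{E}^{\mathbb{P}^\mu}[\psi(X_{\sigma'})-S_{\sigma'}\,|\,\mathcal{F}_\sigma].
$$
Taking the essential supremum over $\sigma'\in\mathcal{S}_\sigma$ and identifying $G^\psi$ with that essential supremum by the second line of \eqref{eqn:value_process}, I obtain $M_\sigma\ge G^\psi_\sigma$. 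Evaluating at $\sigma=0$ gives $\mathbb{E}^{\mathbb{P}^\mu}[M_0]\ge \mathbb{E}^{\mathbb{P}^\mu}[G^\psi_0]$, completing the reduction for $\psi\in C_b(O_\mathfrak{C})$. The extension to $\psi\in LSC_b(O_\mathfrak{C})$ is by approximation from below: take $\psi_n\in C_b(O_\mathfrak{C})$ with $\psi_n\uparrow\psi$, then monotone convergence handles $\int\psi_n\,d\nu\to\int\psi\,d\nu$, while monotonicity of the map $\psi\mapsto G^\psi$ combined with the uniform integrability coming from boundedness and class D yields $\mathbb{E}^{\mathbb{P}^\mu}[G_0^{\psi_n}]\to \mathbb{E}^{\mathbb{P}^\mu}[G_0^\psi]$. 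For the final assertion, \ref{itm:bounded_stopping} forces $X_t=\mathfrak{C}$ and $S_t=S_{\tau_\mathfrak{C}}$ for $t\ge\tau_\mathfrak{C}$, so whenever $\sigma\ge\tau_\mathfrak{C}$ the integrand in \eqref{eqn:value_process} equals the $\mathcal{F}_\sigma$-measurable constant $\psi(\mathfrak{C})-S_{\tau_\mathfrak{C}}$, which is therefore both an upper bound and attained, giving the stated identity.

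The step I expect to be the main obstacle is verifying that the Mertens/Doob--Meyer decomposition genuinely applies under \ref{itm:lower_semicontinuity} rather than under the more standard right-continuity assumption on $S$: one needs the right-lower-semicontinuity of $S$, together with the left-lower-semicontinuity of its predictable projection, to guarantee that $G^\psi$ is right-regular in the sense of Mertens and that $A^\psi$ can be chosen so the resulting $M^\psi$ lies in $B(\bar\Omega)$. A secondary delicate point is the LSC approximation step, where one must check that monotone convergence commutes with the essential supremum in \eqref{eqn:value_process} rather than just with pointwise suprema; this reduces to a Fatou-type argument enabled by the class D property of $\psi(X_\cdot)-S_\cdot$.
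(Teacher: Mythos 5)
Your proposal is correct and follows essentially the same route as the paper: the Mertens/Doob--Meyer decomposition of the Snell envelope supplies $(\psi,M^\psi)\in\mathcal{A}_S$ with $M^\psi_0=G^\psi_0$, the optional-sampling argument shows any admissible $M$ dominates $G^\psi$, the LSC case is handled by monotone approximation from below, and the terminal identity follows from the absorption at $\mathfrak{C}$. The paper's only notable refinement is in the approximation step, where it uses a stopping time attaining the value of $G^\psi$ to bound $\mathbb{E}^{\mathbb{P}^\mu}[G^\psi_\sigma-G^{\psi^i}_\sigma]$ by $\mathbb{E}^{\mathbb{P}^\mu}[\psi(X_\tau)-\psi^i(X_\tau)]$ and then applies dominated convergence, which makes precise the ``Fatou-type argument'' you flag as delicate.
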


	\begin{proof}
	We fix $\psi\in LSC_b({O}_\mathfrak{C})$ and consider an increasing sequence $\psi^i\in C_b({O}_\mathfrak{C})$ that converges pointwise to $\psi$. For each $\psi^i$, the process $(\psi^i(X_t)-S_t)_{t\in \R^+}$ satisfies the assumptions of \cite{bismut1979potential} in view of \ref{itm:lower_semicontinuity}, making $G^{\psi^i}$ a regular supermartingale, i.e.\
	$$
		\E^{\mathbb{P}^\mu} \big[G^{\psi^i}_{\sigma^k}\big]\rightarrow\E^{\mathbb{P}^\mu} \big[G^{\psi^i}_{\sigma}\big]
	$$
	whenever $\sigma^k\rightarrow \sigma$ from the right.  We clearly have that $G^{\psi}\geq G^{\psi^i}$. For $\sigma\in \mathcal{S}$, there is $\tau\geq \sigma$ that attains the value of $G^\psi$, such that
	$$
		\mathbb{E}^{\mathbb{P}^\mu}\big[G^\psi_\sigma\big] = \mathbb{E}^{\mathbb{P}^\mu}\big[\psi(X_\tau)-S_\tau\big],
	$$
	and
	$$
		\E^{\mathbb{P}^\mu}\big[G^\psi_\sigma-G^{\psi^i}_\sigma\big]\leq \E^{\mathbb{P}^\mu}\big[\psi(X_\tau)-\psi^i(X_\tau)\big],
	$$
	which converges to zero by the dominated convergence theorem.  In particular, we have
	$$
		\inf_{x\in O_{\mathfrak{C}}}\psi(x)-\mathbb{E}^{\mathbb{P}^\mu}\big[S_\sigma\big]\leq \mathbb{E}^{\mathbb{P}^\mu}\big[G_\sigma^\psi\big]\leq \sup_{x\in O_\mathfrak{C}} \psi(x)-\inf_{t,\omega} S_t(\omega),
	$$
	so $G^\psi$ is uniformly integrable.  The supermartingale property for $G^\psi$ follows simply from noting that $G_t^\psi \geq \E^{\mathbb{P}^\mu}[G_\sigma^\psi|\mathcal{F}_t]$ for any $\sigma\in \mathcal{S}_t$. By setting $\sigma = t$  we see that  $G_t^\psi\geq \psi(X_t)-S_t$. 

	We take $M_t^\psi$ to be defined as the unique martingale of the Doob-Meyer decomposition \eqref{eqn:Doob-Meyer} and $M_t^\psi\geq G_t^\psi$, and it follows  $(\psi,M^\psi)\in \mathcal{A}_S$ with $M^\psi_0=G^\psi_0$ when $\psi\in C_b(O_\mathfrak{C})$.  Finally, we may find $\hat{\psi}\in C_b(O_\mathfrak{C})$ with $\hat{\psi}\leq \psi$ and arbitrarily close cost by lower-semicontinuity of ${\psi}$, which implies the inequality $\geq$ of (\ref{eqn:concave_maximization}).

	For each  $M$ with $(\psi, M) \in \mathcal{A}_S$ and for all $\bar{\mathbb{P}}\in \mathcal{T}_t(\mu)$, we have 
	$$
		M_t(\omega) = \mathbb{E}^{\bar{\mathbb{P}}}\big[M_T|\bar{\mathcal{F}}_t\big](t,\omega)\geq \mathbb{E}^{\bar{\mathbb{P}}}\big[\psi(X_T) - S_T\big|\bar{\mathcal{F}}_t\big](t,\omega),
	$$
	thus $M_t\geq G_t^\psi$, which completes the proof of \eqref{eqn:concave_maximization}. 

	That $G_t^\psi=\psi(\mathfrak{C})-S_{\tau_\mathfrak{C}}$ holds $\mathbb{P}^\mu$ almost surely whenever $t\geq \tau_\mathfrak{C}$ follows directly from (\ref{eqn:value_process}), \ref{itm:bounded_stopping} and the assumptions on $X$.
 	\end{proof}

 	We now state the duality that is central to our analysis, which slightly extends  the duality of \cite{beiglboeck2017optimal} in some ways although is simplified by our assumption of bounds from the killing time \ref{itm:bounded_stopping} (see also a similar proof in a more specific setting in \cite{GKPS}). 

	\begin{theorem}\label{thm:weak_duality}
		We suppose {\normalfont\ref{itm:lower_semicontinuity}} and {\normalfont\ref{itm:bounded_stopping}}.
		Then, including the possible value of $+\infty$,
		$$
			\mathcal{D}_S(\mu,\nu) = \mathcal{P}_S(\mu,\nu),
		$$
		and if $\mathcal{P}_S(\mu,\nu)<+\infty$, there is $\bar{\mathbb{P}}^*\in \mathcal{T}(\mu,\nu)$ such that $\mathcal{P}_S(\mu,\nu) = \mathbb{E}^{\bar{\mathbb{P}}^*}[S_{T}]$.
	\end{theorem}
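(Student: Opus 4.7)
The plan is to establish weak duality directly, deduce primal attainment from Baxter--Chacon compactness and lower-semicontinuity, and then obtain strong duality via a minimax argument applied to a bilinear Lagrangian on $\mathcal{T}(\mu) \times C_b(O_\mathfrak{C})$.

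First I would prove $\mathcal{D}_S(\mu,\nu) \leq \mathcal{P}_S(\mu,\nu)$. Given any $\bar{\mathbb{P}} \in \mathcal{T}(\mu,\nu)$ and any $(\psi, M) \in \mathcal{A}_S$, the admissibility inequality $M_\sigma \geq \psi(X_\sigma) - S_\sigma$ specialized at $\sigma = T$, combined with optional sampling applied to the uniformly integrable martingale $M \in B(\bar{\Omega})$ at the stopping time $T \leq \tau_\mathfrak{C}$ (which has finite expectation), gives
$$\mathbb{E}^{\mathbb{P}^\mu}[M_0] = \mathbb{E}^{\bar{\mathbb{P}}}[M_T] \geq \int_{O_\mathfrak{C}} \psi \, d\nu - \mathbb{E}^{\bar{\mathbb{P}}}[S_T].$$
Taking the sup over $(\psi,M)$ and the inf over $\bar{\mathbb{P}}$ yields the weak inequality. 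Primal attainment, in the case $\mathcal{P}_S(\mu,\nu) < +\infty$, is then obtained by extracting from any minimizing sequence a weak$^*$ limit in the compact set $\mathcal{T}(\mu,\nu)$ (compactness is guaranteed by Baxter--Chacon because $T \leq \tau_\mathfrak{C}$) and invoking the lower-semicontinuity of $\bar{\mathbb{P}} \mapsto \mathbb{E}^{\bar{\mathbb{P}}}[S_T]$ granted by \ref{itm:lower_semicontinuity} and \ref{itm:bounded_stopping}.

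For strong duality I would introduce the Lagrangian
$$L(\bar{\mathbb{P}},\psi) := \mathbb{E}^{\bar{\mathbb{P}}}\!\bigl[S_T - \psi(X_T)\bigr] + \int_{O_\mathfrak{C}} \psi \, d\nu$$
on the product $\mathcal{T}(\mu) \times C_b(O_\mathfrak{C})$, where $\mathcal{T}(\mu)$ is restricted to $T\leq \tau_\mathfrak{C}$ so as to be convex and weak$^*$-compact. The $\sup$ over $\psi$ acts as a Lagrange multiplier enforcing $X_T \sim \nu$, giving $\sup_\psi L(\bar{\mathbb{P}},\psi) = \mathbb{E}^{\bar{\mathbb{P}}}[S_T]$ on $\mathcal{T}(\mu,\nu)$ and $+\infty$ elsewhere; hence $\inf_{\bar{\mathbb{P}}} \sup_\psi L = \mathcal{P}_S(\mu,\nu)$. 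The $\inf$ in $\bar{\mathbb{P}}$ is, by the Snell envelope formula \eqref{eqn:value_process}, equal to $\int \psi\, d\nu - \mathbb{E}^{\mathbb{P}^\mu}[G_0^\psi]$, so $\sup_\psi \inf_{\bar{\mathbb{P}}} L = \mathcal{D}_S(\mu,\nu)$ by Lemma \ref{lem:value_process}. I would then apply Sion's minimax theorem, noting that $L$ is affine in each variable and that $\bar{\mathbb{P}} \mapsto L(\bar{\mathbb{P}},\psi)$ is lower-semicontinuous by \ref{itm:lower_semicontinuity} together with continuity of $\psi$.

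The main obstacle will be verifying the lower-semicontinuity of $\bar{\mathbb{P}} \mapsto \mathbb{E}^{\bar{\mathbb{P}}}[S_T]$ in the weak$^*$ topology induced on the joint law of $(T, X_T)$, which is precisely where both halves of \ref{itm:lower_semicontinuity} are needed: right-lsc of $S$ handles the optional part by approximating $S$ from below with bounded continuous processes, while left-lsc of the predictable projection handles the jumps of $T$ itself. A secondary technicality is the boundary case $\mathcal{P}_S(\mu,\nu) = +\infty$, which is handled by convention when $\mathcal{T}(\mu,\nu) = \emptyset$ and otherwise inherited from the minimax identity above; exhibiting an unbounded sequence of admissible $\psi$'s driving $\mathcal{D}_S$ to $+\infty$ in the non-embeddable case can be arranged using the nondegeneracy condition that every open $Q \subset O$ is reachable with positive probability by some $\sigma \in \mathcal{S}$.
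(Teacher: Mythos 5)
Your proposal is correct, and it reaches the duality by a route that is recognizably different in mechanics, though equivalent in substance, to the paper's. The paper sets $V(\nu)=\mathcal{P}_S(\mu,\nu)$, checks that $V$ is convex (by mixing randomized stopping times) and lower-semicontinuous (by Baxter--Chacon compactness of $\{T\le\tau_\mathfrak{C}\}$ and \ref{itm:lower_semicontinuity}), computes $V^*(\psi)=\sup_{\bar{\mathbb{P}}\in\mathcal{T}(\mu)}\mathbb{E}^{\bar{\mathbb{P}}}[\psi(X_T)-S_T]$, and concludes $V=V^{**}=\mathcal{D}_S(\mu,\nu)$ by Fenchel--Moreau together with Lemma \ref{lem:value_process}. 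You instead run Sion's minimax theorem on the Lagrangian $L(\bar{\mathbb{P}},\psi)$; the two arguments consume exactly the same three ingredients (compactness of the restricted $\mathcal{T}(\mu)$, lower-semicontinuity of $\bar{\mathbb{P}}\mapsto\mathbb{E}^{\bar{\mathbb{P}}}[S_T]$, and the identification $\inf_{\bar{\mathbb{P}}}L=\int\psi\,d\nu-\mathbb{E}^{\mathbb{P}^\mu}[G_0^\psi]$ from Lemma \ref{lem:value_process}), so neither is more general, but the minimax formulation makes the hypotheses explicit and, with the convention $\inf\emptyset=+\infty$, handles the value $+\infty$ automatically: since $L$ itself is finite everywhere, Sion gives $\inf\sup=\sup\inf$ even when both sides are infinite, so your worry about separately "exhibiting an unbounded sequence of $\psi$'s" in the non-embeddable case is unfounded (and the reachability condition you invoke for it is not the right tool — what you actually use is that $C_b(O_\mathfrak{C})$ separates Borel measures on the Polish space $O_\mathfrak{C}$, which is what makes $\sup_\psi L=+\infty$ off the constraint set). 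Your preliminary optional-sampling proof of $\mathcal{D}_S\le\mathcal{P}_S$ is correct but redundant once the minimax identity is in place. The primal attainment step is identical to the paper's.
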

	\begin{proof}
	The proof is a standard application of convex duality, which we sketch for completeness.	
We let $V(\nu)=\mathcal{P}_S(\mu,\nu)$, and we have that $V(\nu)$ is convex since if $\bar{\mathbb{P}}^0\in \mathcal{T}(\mu,\nu^0)$ and $\bar{\mathbb{P}}^1\in \mathcal{T}(\mu,\nu^1)$ then a $(1-\lambda)\bar{\mathbb{P}}^0+\lambda \bar{\mathbb{P}}^1\in \mathcal{T}(\mu,(1-\lambda)\nu^0+\lambda \nu^1)$, and the cost is linear in $\bar{\mathbb{P}}$.  Furthermore, $\nu\mapsto V(\nu)$ is lower-semicontinuous because the randomized stopping times with $T\leq \tau_\mathfrak{C}$ is compact, and the cost is lower-semicontinuous by \ref{itm:lower_semicontinuity}. We can express the Legendre transform as
   \begin{align*}
   	V^{*}(\psi)=&\ \sup_{\nu \in \mathcal{M}(O_\mathfrak{C})}\Big\{\int_{O_\mathfrak{C}} \psi(y) \nu(dy) - V(\nu)\Big\}\\
   	=&\ \sup_{\bar{\mathbb{P}}\in \mathcal{T}(\mu)}\Big\{\mathbb{E}^{\bar{\mathbb{P}}} \Big[\psi(X_T)   -S_T\Big] 
  \Big\},
  \end{align*}
and
\begin{align*}
	V^{**}(\nu)=&\ \sup_{\psi\in C_b(O_\mathfrak{C})}\Big\{\int_{O_\mathfrak{C}}\psi(y)\nu(dy)-\sup_{\bar{\mathbb{P}}\in \mathcal{T}(\mu)}\Big\{\mathbb{E}^{\bar{\mathbb{P}}} \Big[\psi(X_T)   -S_T\Big] 
  \Big\}\\
  =&\  \sup_{\psi\in C_b(O_\mathfrak{C})}\Big\{\int_{O_\mathfrak{C}}\psi(y)\nu(dy)-\mathbb{E}^{\mathbb{P}^\mu}\big[G^\psi_0\big]\Big\},
\end{align*}
where the second line follows from Lemma \ref{lem:value_process}.  Since $V^{**}(\nu)=V(\nu)$ by convexity and lower-semicontinuity, this completes the proof that  that $\mathcal{D}_S(\mu,\nu) = \mathcal{P}_S(\mu,\nu)$ as relaxing to $\psi\in LSC_b(O_\mathfrak{C})$ does not change the cost as in Lemma \ref{lem:value_process}.

When $\mathcal{P}_S(\mu,\nu)<+\infty$, by compactness of $\mathcal{T}(\mu,\nu)$ and \ref{itm:lower_semicontinuity}, we have the existence of a minimizer $\bar{\mathbb{P}}^*$.
	\end{proof}

We have the following `verification' type result for the dual optimizer. 
 	\begin{theorem}\label{thm:verification} 
 		Suppose {\normalfont \ref{itm:lower_semicontinuity}} and {\normalfont\ref{itm:bounded_stopping}} and that $\psi\in LSC_b(O_\mathfrak{C})$ attains the maximum of $\mathcal{D}_S(\mu,\nu)$, and $\bar{\mathbb{P}}^*\in \mathcal{T}(\mu,\nu)$ minimizes {\normalfont (\ref{eqn:primal})}.  Then $\bar{\mathbb{P}}^*$ maximizes
		\begin{align}\label{eqn:auxilliary}
			\E^{\bar{\mathbb{P}}}\big[\psi(X_T)-S_T\big]
		\end{align}
		over $\bar{\mathbb{P}}\in \mathcal{T}(\mu)$.

		Furthermore, for any maximizer $\bar{\mathbb{P}}\in \mathcal{T}(\mu)$ of {\normalfont (\ref{eqn:auxilliary})}, we have
\begin{enumerate}
  \item $G^\psi_{T} = \psi(X_{T})-S_{T}$ holds $\bar{\mathbb{P}}$ almost surely, 
  \item $G^\psi_{t\wedge T}$ is a $(\bar{\Omega},\bar{\mathbb{F}},\bar{\mathbb{P}})$ martingale, i.e., $M^\psi_{t\wedge T} = G^{\psi}_{t\wedge T}$ holds $\bar{\mathbb{P}}$ almost surely for all $t\in \mathbb{R}^+$.
\end{enumerate}
		 	\end{theorem}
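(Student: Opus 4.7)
The plan is to deduce everything from the duality identity and the Doob--Meyer decomposition of the Snell envelope $G^\psi$. For part (a), strong duality from Theorem \ref{thm:weak_duality} gives
$$
	\int_{O_{\mathfrak C}} \psi(y)\,\nu(dy) - \mathbb{E}^{\mathbb{P}^\mu}\big[G_0^\psi\big] \;=\; \mathcal{D}_S(\mu,\nu) \;=\; \mathcal{P}_S(\mu,\nu)\;=\; \mathbb{E}^{\bar{\mathbb{P}}^*}\big[S_T\big].
$$
Since $\bar{\mathbb{P}}^* \in \mathcal{T}(\mu,\nu)$, the $X_T$-marginal of $\bar{\mathbb{P}}^*$ is $\nu$, so $\int \psi\,d\nu = \mathbb{E}^{\bar{\mathbb{P}}^*}[\psi(X_T)]$. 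Rearranging and using the second representation of $G^\psi$ in \eqref{eqn:value_process} together with the identity $\mathbb{E}^{\mathbb{P}^\mu}[G_0^\psi] = \sup_{\bar{\mathbb{P}}\in\mathcal{T}(\mu)} \mathbb{E}^{\bar{\mathbb{P}}}[\psi(X_T)-S_T]$ supplied by Lemma \ref{lem:value_process}, one obtains $\mathbb{E}^{\bar{\mathbb{P}}^*}[\psi(X_T)-S_T] = \mathbb{E}^{\mathbb{P}^\mu}[G_0^\psi]$, which is precisely the assertion that $\bar{\mathbb{P}}^*$ maximizes \eqref{eqn:auxilliary}.

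For (1) and (2), fix an arbitrary maximizer $\bar{\mathbb{P}} \in \mathcal{T}(\mu)$ of \eqref{eqn:auxilliary} and invoke the Doob--Meyer decomposition \eqref{eqn:Doob-Meyer}, $G^\psi = M^\psi - A^\psi$, with $M^\psi$ a uniformly integrable martingale (uniform integrability is inherited from boundedness of $\psi$, class D of $S$, and \ref{itm:bounded_stopping}) and $A^\psi$ a nondecreasing predictable process with $A_0^\psi = 0$. Optional sampling applied to $M^\psi$ at the bounded randomized time $T \leq \tau_{\mathfrak C}$ gives
$$
	\mathbb{E}^{\bar{\mathbb{P}}}\big[M_T^\psi\big] \;=\; \mathbb{E}^{\mathbb{P}^\mu}\big[M_0^\psi\big] \;=\; \mathbb{E}^{\mathbb{P}^\mu}\big[G_0^\psi\big],
$$
using that $\mathbb{P}^\mu = (\pi^\Omega)_\# \bar{\mathbb{P}}$ so that $M^\psi$ lifts to a martingale on $(\bar{\Omega},\bar{\mathbb{F}},\bar{\mathbb{P}})$ with the same law.

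Combining this with $G_T^\psi = M_T^\psi - A_T^\psi$ and the pointwise inequality $G_T^\psi \geq \psi(X_T) - S_T$,
$$
	\mathbb{E}^{\mathbb{P}^\mu}\big[G_0^\psi\big] \;=\; \mathbb{E}^{\bar{\mathbb{P}}}\big[G_T^\psi\big] + \mathbb{E}^{\bar{\mathbb{P}}}\big[A_T^\psi\big] \;\geq\; \mathbb{E}^{\bar{\mathbb{P}}}\big[\psi(X_T)-S_T\big] + \mathbb{E}^{\bar{\mathbb{P}}}\big[A_T^\psi\big].
$$
Since $\bar{\mathbb{P}}$ maximizes \eqref{eqn:auxilliary} with value $\mathbb{E}^{\mathbb{P}^\mu}[G_0^\psi]$, both nonnegative quantities $\mathbb{E}^{\bar{\mathbb{P}}}[G_T^\psi - \psi(X_T)+S_T]$ and $\mathbb{E}^{\bar{\mathbb{P}}}[A_T^\psi]$ vanish. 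The first vanishing yields (1), while $A_T^\psi = 0$ $\bar{\mathbb{P}}$-a.s.\ combined with $A_0^\psi = 0$ and monotonicity of $A^\psi$ forces $A_{t\wedge T}^\psi = 0$ for every $t \in \R^+$, hence $G_{t\wedge T}^\psi = M_{t\wedge T}^\psi$, establishing (2).

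The main technical obstacle is justifying optional sampling at the randomized time $T$: one needs the Snell envelope $G^\psi$ to be sufficiently regular (right-continuity up to the killing time) so that $M_T^\psi$ is well-defined, and one must confirm that the martingale property of $M^\psi$ under $\mathbb{P}^\mu$ transfers to the enlargement $(\bar{\Omega},\bar{\mathbb{F}},\bar{\mathbb{P}})$ where $T$ is a stopping time. Both points follow from \ref{itm:lower_semicontinuity}--\ref{itm:bounded_stopping} and the regular-supermartingale structure established in the proof of Lemma \ref{lem:value_process} (cf.\ \cite{bismut1979potential}), with the uniform integrability of $M^\psi$ up to $\tau_{\mathfrak{C}}$ following from $\|\psi\|_\infty < \infty$ and $S$ being of class D.
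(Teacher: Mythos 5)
Your proof is correct and follows essentially the same route as the paper: part (a) is the identical duality computation, and for (1)--(2) the paper applies optional sampling directly to the supermartingale $G^\psi$ at $T$ (getting (1) from the pointwise inequality $G^\psi_T\geq \psi(X_T)-S_T$ plus equality in expectation, and (2) from the fact that a supermartingale with constant expectation along $t\wedge T$ is a martingale), whereas you obtain the same two vanishing gaps by splitting $\mathbb{E}^{\bar{\mathbb{P}}}[G_0^\psi]-\mathbb{E}^{\bar{\mathbb{P}}}[\psi(X_T)-S_T]$ via Doob--Meyer into $\mathbb{E}^{\bar{\mathbb{P}}}[A_T^\psi]+\mathbb{E}^{\bar{\mathbb{P}}}[G_T^\psi-\psi(X_T)+S_T]$ --- an equivalent packaging of the same optional-sampling argument. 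The only wording to correct is calling $T\leq\tau_{\mathfrak{C}}$ ``bounded'' (it is not), but since, as you note, the uniform integrability of $M^\psi$ inherited from $G^\psi$ being of class D is what actually justifies optional sampling, no gap results.
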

 	\begin{proof}
 		We note that
 		\begin{align*}
 			\sup_{\bar{\mathbb{P}}\in \mathcal{T}(\mu)}\Big\{\E^{\bar{\mathbb{P}}}\big[\psi(X_T)-S_T\big]\Big\}=\mathbb{E}^{\mathbb{P}^\mu}\big[G_0^\psi\big]=\E^{\bar{\mathbb{P}}^*}\big[\psi(X_T)-S_T\big]
 		\end{align*}
 		by the duality of Theorem \ref{thm:weak_duality}, thus $\bar{\mathbb{P}}^*$ is a maximizer of (\ref{eqn:auxilliary}).

 		For any maximizer $\bar{\mathbb{P}}\in \mathcal{T}(\mu)$ of (\ref{eqn:auxilliary}), by definition of $G^\psi$, with $\bar{\mathbb{P}}$ probability 1, we have
 		$$
 			G^\psi_{T}\ge \psi(X_{T})-S_{T},
 		$$
 		and by the supermartingale property of $G^\psi$ we have
 		$$
 			\mathbb{E}^{\bar{\mathbb{P}}}[G^\psi_{T}]\le \mathbb{E}^{\mathbb{P}^\mu}[G^\psi_0] = \mathbb{E}^{\bar{\mathbb{P}}}\big[\psi(X_{T})-S_{T}\big]
 		$$
 		where the last equality is due to the fact that $\bar{\mathbb{P}}$ the maximum. Thus we have $G^\psi_{T} = \psi(X_{T})-S_{T}$ holds $\bar{\mathbb{P}}$ almost surely since $\ge$ holds almost surely and $\le$ holds in expectation. This proves (1).  It also implies that $\mathbb{E}^{\bar{\mathbb{P}}}[G^\psi_{T}]= \mathbb{E}^{\mathbb{P}^\mu}[G^\psi_0]$. Then, for $t\in \mathbb{R}^+$, from the supermartingale property we have 
 		$$
 			\mathbb{E}^{\bar{\mathbb{P}}}[G^\psi_{t \wedge T}]\geq \mathbb{E}^{\bar{\mathbb{P}}}[G^\psi_{T}]=\mathbb{E}^{\bar{\mathbb{P}}}[G^\psi_{0}]  		$$
 		yielding  $\mathbb{E}^{\bar{\mathbb{P}}}[G^\psi_{t \wedge T}] = \mathbb{E}^{\bar{\mathbb{P}}}[G^\psi_{0}]$. Since $t\mapsto G^\psi_{t\wedge T}$ is a supermartingale, this last property implies that it is a martingale. Since $M^\psi_T \geq G^\psi_T$ it immediately follows that they are equal $\bar{\mathbb{P}}$ almost surely, proving (2).
 	\end{proof}

The final lemma in this section selects a maximal $\psi^{\text{\it max}}$ given $\psi$, which also does not decrease the value.  This will play an important  role later for attainment of the problem \eqref{eqn:concave_maximization}. At this point $\psi^{\text{\it max}}$ is not necessarily bounded above, but when we apply the Lemma we will have a natural upper bound of $0$.
 	\begin{lemma}\label{lem:psi_maximization} 
 		We suppose {\normalfont\ref{itm:lower_semicontinuity}}, {\normalfont\ref{itm:bounded_stopping}} and $\psi\in LSC_b(O_\mathfrak{C})$. We let 
 		$$
 			\psi^{\text{\it max}}(y):=\sup_{\phi\in C_b({O}_\mathfrak{C})}\big\{\phi(y);\ \phi(X_\sigma(\omega))\leq G_\sigma^\psi(\omega) +S_\sigma(\omega),\ \forall\ \sigma\in \mathcal{S},\ \mathbb{P}^\mu-a.e.\ \omega\big\}.
 		$$  
 		Then we have the following:
 		\begin{enumerate}[label=\roman*.]
 			\item\label{itm:psi_increases} $\psi^{\text{\it max}}(y)\geq \psi(y)$\ for all $y\in O_{\mathfrak{C}}$;
 			\item\label{itm:same_value_process} $G^{\psi^{\text{\it max}}}_\sigma= G^\psi_\sigma,\ \forall\ \sigma\in \mathcal{S},\ \mathbb{P}^\mu-a.e.\ \omega$.
 		\end{enumerate}
 	\end{lemma}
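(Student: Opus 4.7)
For part (i), I approximate $\psi$ from below. Since $\psi\in LSC_b(O_\mathfrak{C})$, choose an increasing sequence $\psi^n\in C_b(O_\mathfrak{C})$ with $\psi^n\nearrow\psi$ pointwise. Taking the trivial choice $\tau=\sigma$ in the Snell envelope definition \eqref{eqn:value_process} gives $G^\psi_\sigma\geq \psi(X_\sigma)-S_\sigma$ $\mathbb{P}^\mu$-a.s.\ for every $\sigma\in\mathcal{S}$. Hence each $\psi^n$ satisfies $\psi^n(X_\sigma)\leq \psi(X_\sigma)\leq G^\psi_\sigma+S_\sigma$, so $\psi^n$ lies in the class defining $\psi^{\text{\it max}}$. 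Passing to the limit gives $\psi^{\text{\it max}}(y)\geq \psi(y)$.

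For part (ii), the inequality $G^{\psi^{\text{\it max}}}_\sigma\geq G^\psi_\sigma$ follows from (i) and monotonicity of the Snell envelope. For the reverse, I claim it suffices to prove
$$
\psi^{\text{\it max}}(X_\sigma)\leq G^\psi_\sigma+S_\sigma,\quad \mathbb{P}^\mu\text{-a.s., for every }\sigma\in\mathcal{S}.
$$
Granted this, $G^\psi$ is itself a supermartingale dominating the payoff $\psi^{\text{\it max}}(X_\cdot)-S_\cdot$, so the minimality of the Snell envelope $G^{\psi^{\text{\it max}}}$ forces $G^{\psi^{\text{\it max}}}_\sigma\leq G^\psi_\sigma$.

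To establish this almost-sure estimate, I exploit that the admissible family is directed upward: if $\phi_1,\phi_2\in C_b(O_\mathfrak{C})$ are admissible then so is $\max(\phi_1,\phi_2)$. By the standard essential-supremum construction for directed families, combined with separability of the distribution of $X_\sigma$ on the Polish space $O_\mathfrak{C}$ and the continuity of each admissible $\phi$, I extract a countable increasing sequence $(\phi^n)$ of admissible functions with $\sup_n\phi^n(X_\sigma)=\psi^{\text{\it max}}(X_\sigma)$ $\mathbb{P}^\mu$-a.s. Each inequality $\phi^n(X_\sigma)\leq G^\psi_\sigma+S_\sigma$ holds $\mathbb{P}^\mu$-a.s.; the countable intersection of the full-measure sets on which they hold is still full measure, and monotone convergence yields the desired bound.

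The main obstacle is precisely this countable extraction: the pointwise supremum $\psi^{\text{\it max}}=\sup_\phi\phi$ is taken over an uncountable family, while admissibility holds only $\mathbb{P}^\mu$-almost surely with an exceptional null set depending on $\phi$. The directedness of the admissible family, together with Polish-space separability and continuity of each $\phi$, is the structural fact that allows the reduction to a countable subsequence whose pointwise supremum equals $\psi^{\text{\it max}}$ along $X_\sigma$'s law.
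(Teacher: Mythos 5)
Your proof is correct and follows essentially the same route as the paper's: part (i) by approximating the lower-semicontinuous $\psi$ from below by admissible continuous functions, and part (ii) by establishing $\psi^{\text{\it max}}(X_\sigma)\leq G^\psi_\sigma+S_\sigma$ almost surely and then using the supermartingale property of $G^\psi$ (minimality of the Snell envelope) to conclude $G^{\psi^{\text{\it max}}}\leq G^\psi$. The paper merely asserts that the pointwise inequality ``is maintained in the limit''; your countable-extraction step (directedness of the admissible family plus separability of $O_\mathfrak{C}$ and continuity of each $\phi$) supplies the measure-theoretic justification that the paper leaves implicit.
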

 	\begin{proof}
 		We immediately note that $\psi^{\text{\it max}}$ is bounded below and lower-semicontinuous as the supremum of continuous functions.   Furthermore, $\psi$ can be expressed as the supremum of continuous functions which satisfy $\phi(X_\sigma)\leq G_\sigma^\psi + S_\sigma$ thus $\psi^{\text{\it max}}\geq \phi$ and \emph{\ref{itm:psi_increases}}\ follows.

The inequality $G^{\psi^{\text{\it max}}}\geq G^\psi$ is obvious from \emph{\ref{itm:psi_increases}}.
 		The pointwise inequality $\psi^{\text{\it max}}(X_\sigma)\leq G^\psi_\sigma+S_\sigma$ is maintained in the limit so in particular, thus
 		\begin{align*}
 			G^{{\psi}^{\text{\it max}}}_t=&\ \sup_{\sigma \in \mathcal{S}_t}\E^{\mathbb{P}^\mu}\big[\psi^{\text{\it max}}(X_\sigma)-S_\sigma\big|\mathcal{F}_t\big]\\
 			\leq&\ \sup_{\sigma \in \mathcal{S}_t}\E^{\mathbb{P}^\mu}\big[G^\psi_\sigma\big|\mathcal{F}_t\big] \leq G^\psi_t,
 		\end{align*}
 		using the supermartingale property of $G^\psi$, so \emph{\ref{itm:same_value_process}} follows.
 	\end{proof}

 	 	 \section{Pointwise Bounds}\label{sec:pointwise_bounds}
	We establish in this section pointwise bounds on the dual functions, which will be crucial in dual attainment in the later sections.
 	 	 We first introduce some additional structure to the processes.
 		 We recall that a stationary Feller process is given by a probability transition semigroup, such that for each $t>0$ the distribution of $X_t$ given $X_0=x$ is given by $P(t,x,\cdot)$, satisfying for $0<s<t$,
 			$$
 				P(t,x,\cdot)=\int_{O_\mathfrak{C}}P(t-s,y,\cdot)P(s,x,dy),
 			$$
 			and that $\lim_{t\rightarrow 0} \int_{O_\mathfrak{C}}f(y)P(t,\cdot,dy)=f$ uniformly for all $f\in C_b(O_\mathfrak{C})$.

 			We note that the processes beginning at $X_t=x$, are independent processes in a fixed probability space $(\Omega,\mathbb{F},\mathbb{P}^{x})$.  We let $\mathbb{E}^{x}$ denote expectation with respect to this probability space. We let $\mathcal{S}^x$ denote the (nonrandomized) stopping times given in  this probability space.  
		For additional references on optimal stopping in this setting see \cite{lamberton1998american} chapter 2 and references therein, as well as \cite{el1992probabilistic}.

 	 We define $\psi^{\text{\it r\'{e}.}}$ to be the r\'{e}duite of $\psi$.  This function corresponds to the superharmonic envelope when the process is Brownian motion. For $\psi\in LSC_b(O_\mathfrak{C})$,
 		\begin{align}\label{eqn:superharmonic}
 			\psi^{\text{\it r\'{e}.}}(x):=&\ \sup_{\sigma\in \mathcal{S}^x}\mathbb{E}^x\big[\psi(X_\sigma)\big].
 		\end{align}
 		
 		We say that \emph{balayage} holds, or 
 		$
 			\mu\prec\nu
 		$
 		if
 		$$
 			\int_{O_\mathfrak{C}}\psi(y)\nu(dy)\leq \int_{O_\mathfrak{C}}\psi(x)\mu(dx)
 		$$
 		for all supermedian functions, i.e.\ whenever $\psi=\psi^{\text{\it r\'{e}.}}$.

We first show a uniform pointwise bound assuming the following additional assumptions.
 		\begin{enumerate} [label=\textbf{(B\arabic*)}] \setcounter{enumi}{-1}
 			\item \label{itm:Feller}$X$ is a stationary Feller process.  Furthermore, we suppose that for all $\psi\in C_b(O_\mathfrak{C})$ the r\'{e}duite function of (\ref{eqn:superharmonic}) is continuous and bounded,  $\psi^{\text{\it r\'{e}.}}\in C_b(O_\mathfrak{C})$.  (The second assumption holds for all Feller processes if $O_\mathfrak{C}$ is compact; see \cite{el1992probabilistic}.)
 			\item \label{itm:submartingale} We suppose that $S_0=0$ and $S$ is a $(\Omega,\mathbb{F},\mathbb{P}^\mu)$-submartingale.
 	\end{enumerate}

 		The following corollary of Theorem \ref{thm:weak_duality} recovers a result of Rost \cite{rost1971stopping}.
	\begin{corollary}\label{cor:Strassen}
		Given {\normalfont \ref{itm:Feller}}, there exists $\bar{\mathbb{P}}\in \mathcal{T}(\mu,\nu)$ if and only if $\mu\prec \nu$.
	\end{corollary}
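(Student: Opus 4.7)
The plan is to apply Theorem~\ref{thm:weak_duality} to the trivial cost $S \equiv 0$, which satisfies both \ref{itm:lower_semicontinuity} and \ref{itm:bounded_stopping} automatically. Non-emptiness of $\mathcal{T}(\mu,\nu)$ is equivalent to $\mathcal{P}_S(\mu,\nu) < +\infty$ under the convention $\inf \emptyset = +\infty$, and hence, by Theorem~\ref{thm:weak_duality}, to $\mathcal{D}_S(\mu,\nu) < +\infty$. Choosing the admissible pair $\psi \equiv 0$ already gives $\mathcal{D}_S \geq 0$, so it will suffice to show that $\mathcal{D}_S(\mu,\nu) = 0$ exactly when $\mu \prec \nu$.

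Next I would identify the value process explicitly. For $S \equiv 0$ and $\psi \in C_b(O_\mathfrak{C})$, the stationary Markov property at time $0$ together with the definition \eqref{eqn:superharmonic} of the r\'eduite gives $G_0^\psi(\omega) = \psi^{\text{\it r\'{e}.}}(X_0(\omega))$. By \ref{itm:Feller} we have $\psi^{\text{\it r\'{e}.}} \in C_b(O_\mathfrak{C})$, and this identity extends to $\psi \in LSC_b(O_\mathfrak{C})$ via the same monotone $C_b$-approximation used in the proof of Lemma~\ref{lem:value_process}. Substituting into the formula of Lemma~\ref{lem:value_process} yields
\begin{align*}
    \mathcal{D}_S(\mu,\nu) \;=\; \sup_{\psi \in LSC_b(O_\mathfrak{C})} \Big\{ \int_{O_\mathfrak{C}} \psi \, d\nu - \int_{O_\mathfrak{C}} \psi^{\text{\it r\'{e}.}}\, d\mu \Big\}.
\end{align*}

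Since $\psi \le \psi^{\text{\it r\'{e}.}}$ pointwise and $\psi^{\text{\it r\'{e}.}}$ is itself supermedian (that is, $(\psi^{\text{\it r\'{e}.}})^{\text{\it r\'{e}.}} = \psi^{\text{\it r\'{e}.}}$), replacing $\psi$ by $\psi^{\text{\it r\'{e}.}}$ in the functional can only increase it, so the supremum is attained on supermedian test functions:
\begin{align*}
    \mathcal{D}_S(\mu,\nu) \;=\; \sup\Big\{ \int \psi \, d\nu - \int \psi \, d\mu \;:\; \psi = \psi^{\text{\it r\'{e}.}} \in LSC_b(O_\mathfrak{C}) \Big\}.
\end{align*}
If $\mu \prec \nu$, every summand on the right is $\le 0$ and $\psi \equiv 0$ attains $0$, hence $\mathcal{D}_S = 0$ and therefore $\mathcal{T}(\mu,\nu) \neq \emptyset$. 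Conversely, if $\mu \not\prec \nu$ then some supermedian $\psi_0 \in LSC_b(O_\mathfrak{C})$ produces a value $c := \int \psi_0\, d\nu - \int \psi_0 \, d\mu > 0$, and scaling to $\lambda \psi_0$ with $\lambda \to +\infty$ forces $\mathcal{D}_S = +\infty$, so $\mathcal{T}(\mu,\nu) = \emptyset$.

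The main technical point is the identification $G_0^\psi = \psi^{\text{\it r\'{e}.}}(X_0)$ for $\psi \in LSC_b$: it relies on the stationary Feller structure together with the $C_b$-regularity of the r\'eduite built into \ref{itm:Feller} to push the identity through the monotone approximation. Once this is secured, the corollary falls out directly from the duality and the definition of balayage.
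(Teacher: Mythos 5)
Your proof is correct and follows essentially the same route as the paper: fix $S\equiv 0$, identify $G^\psi_0=\psi^{\text{\it r\'{e}.}}(X_0)$, invoke Theorem~\ref{thm:weak_duality} together with Lemma~\ref{lem:value_process}, bound the dual functional by $0$ via the supermedian/balayage property when $\mu\prec\nu$, and blow up the dual value by scaling a supermedian $\phi$ when $\mu\not\prec\nu$. The only cosmetic difference is that you make the reduction to supermedian test functions explicit as an intermediate reformulation of $\mathcal{D}_S$, whereas the paper applies the inequality $\int\psi\,d\nu-\E[G_0]\le\int\psi^{\text{\it r\'{e}.}}d\nu-\int\psi^{\text{\it r\'{e}.}}d\mu$ directly.
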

	\begin{proof}
		We fix $S=0$, in which case $G^\psi_t=\psi^{\text{\it r\'{e}.}}(X_t)$.  If $\mu\prec \nu$ then for any $\psi\in LSC_b(O_\mathfrak{C})$, by Lemma \ref{lem:value_process} and the definition of $\psi^{\text{\it r\'{e}.}}$ and the balayage we have
		\begin{align*}
			\int_O \psi(y)\nu(dy)-\E^{\mathbb{P}^\mu}\big[G_0\big]\leq&\ \int_O \psi^{\text{\it r\'{e}.}}(y)\nu(dy)-\int_O \psi^{\text{\it r\'{e}.}}(x)\mu(dx)\leq 0.
		\end{align*}
		It follows from Theorem \ref{thm:weak_duality} that $\mathcal{P}_S(\mu,\nu)=0$ and there exists $\bar{\mathbb{P}}\in \mathcal{T}(\mu,\nu)$.

		If $\mu\not\prec \nu$, then there exists a supermedian function $\phi$,  which satisfies
		$$
			\int_O \phi(y)\nu(dy)-\int_O \phi(x)\mu(dx)> 0,
		$$ 
		in which case $G^{\lambda\phi}_t=\lambda\phi(X_t)$ for any $\lambda>0$. Taking $\lambda$ to $+\infty$ we see that
		$$
			\mathcal{D}_S(\mu,\nu)=+\infty
		$$
		and from Theorem  \ref{thm:weak_duality} we have that $\mathcal{T}(\mu,\nu)$ is empty.
	\end{proof}

 	The next lemma in this section verifies a mean value type property for the r\'{e}duite $\psi^{\text{\it r\'{e}.}}$, which asserts that $\psi^{\text{\it r\'{e}.}}(X_t)$ is a martingale up until the set where it touches the obstacle $\psi$, which is an extension of Theorem \ref{thm:verification} in the case that $S=0$. 
	
	\begin{lemma}\label{lem:superharmonic_snell} 
	Assume {\normalfont\ref{itm:Feller}} and that $\psi\in C_b({O}_\mathfrak{C})$. Then the first hitting time,
		\begin{align*}
			\eta:= \inf\{t;\ \psi\big(X_t\big)=\psi^{\text{\it r\'{e}.}}\big(X_t\big)\},
		\end{align*}
		 attains the supremum of {\normalfont(\ref{eqn:superharmonic})} with 	
\begin{align}\label{eqn:SH-eta}
 \hbox{$\E^x[\psi(X_{\eta})]=\psi^{\text{\it r\'{e}.}}(x)$ and 
$ \psi (X_\eta) = \psi^{\text{\it r\'{e}.}}(X_\eta).$}
\end{align}	
		Moreover, for any  randomized stopping time $\bar{\mathbb{P}}\in \mathcal{T}(\delta_x)$, we have the mean value property
		\begin{align}\label{eqn:equal-in-harmonic}
			\mathbb{E}^{\bar{\mathbb{P}}}\big[\psi^{\text{\it r\'{e}.}}(X_{T\wedge\eta})\big] = \psi^{\text{\it r\'{e}.}}(x).
		\end{align}
\end{lemma}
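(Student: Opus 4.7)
The plan is to reduce the lemma to optimal stopping facts via the Markovian structure provided by \ref{itm:Feller}. First, by the strong Markov property of the stationary Feller process applied to \eqref{eqn:value_process} with $S\equiv 0$, the Snell envelope coincides with the r\'{e}duite evaluated along the trajectory:
\begin{equation*}
   G^\psi_t(\omega)=\psi^{\text{\it r\'{e}.}}(X_t(\omega)) \qquad \mathbb{P}^x\text{-a.s.}
\end{equation*}
Since $\psi^{\text{\it r\'{e}.}}\in C_b(O_\mathfrak{C})$ by \ref{itm:Feller}, the contact set $F=\{\psi^{\text{\it r\'{e}.}}=\psi\}$ is closed and contains $\mathfrak{C}$ (the process is absorbed there, so $\psi^{\text{\it r\'{e}.}}(\mathfrak{C})=\psi(\mathfrak{C})$). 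Consequently, $\eta$ is a well-defined $\mathbb{F}$-stopping time with $\eta\leq \tau_\mathfrak{C}$ a.s., and the right-continuity of $X$ together with closedness of $F$ give $X_\eta\in F$; in particular $\psi(X_\eta)=\psi^{\text{\it r\'{e}.}}(X_\eta)$.

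The heart of the argument is to show that the stopped process $(G^\psi_{t\wedge\eta})_t$ is a martingale. By Lemma~\ref{lem:value_process}, $G^\psi$ is a bounded regular supermartingale and admits the Doob--Meyer decomposition $G^\psi=M^\psi-A^\psi$, with $M^\psi$ a bounded martingale and $A^\psi$ a predictable nondecreasing process starting at $0$. The classical optimal stopping theorem for regular supermartingales (see \cite{el1992probabilistic}, and also \cite{mertens1972theorie,bismut1979potential}) ensures that $A^\psi$ can charge only the contact set $\{t:G^\psi_t=\psi(X_t)\}$; since $G^\psi_t=\psi^{\text{\it r\'{e}.}}(X_t)>\psi(X_t)$ on $[0,\eta)$, we conclude $A^\psi_{t\wedge\eta}=0$ for all $t\geq 0$, so $G^\psi_{t\wedge\eta}=M^\psi_{t\wedge\eta}$ is a bounded martingale.

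With these ingredients, both conclusions follow from optional sampling. For \eqref{eqn:SH-eta}, applied at the stopping time $\eta$,
\begin{equation*}
   \psi^{\text{\it r\'{e}.}}(x)=G^\psi_0=\mathbb{E}^x[G^\psi_\eta]=\mathbb{E}^x[\psi^{\text{\it r\'{e}.}}(X_\eta)]=\mathbb{E}^x[\psi(X_\eta)],
\end{equation*}
so $\eta$ attains the supremum in \eqref{eqn:superharmonic}. For \eqref{eqn:equal-in-harmonic}, given $\bar{\mathbb{P}}\in\mathcal{T}(\delta_x)$, the bounded martingale $(G^\psi_{t\wedge\eta})_t$ remains a martingale under $\bar{\mathbb{P}}$ relative to the enlarged filtration $\bar{\mathbb{F}}$ (the randomization of $T$ is by definition independent of the future of $X$), so optional stopping at $T$ gives
\begin{equation*}
   \mathbb{E}^{\bar{\mathbb{P}}}\!\left[\psi^{\text{\it r\'{e}.}}(X_{T\wedge\eta})\right]=\mathbb{E}^{\bar{\mathbb{P}}}[G^\psi_{T\wedge\eta}]=G^\psi_0=\psi^{\text{\it r\'{e}.}}(x).
\end{equation*}
The principal technical obstacle is the assertion that the compensator $A^\psi$ grows only on the contact set; this is a regular-supermartingale statement whose hypotheses are secured here by \ref{itm:lower_semicontinuity} together with the Feller continuity of $\psi^{\text{\it r\'{e}.}}$ granted by \ref{itm:Feller}.
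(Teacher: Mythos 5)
Your proposal is correct, but it takes a different route from the paper's. The paper's first step (continuity of $\psi$ and $\psi^{\text{\it r\'{e}.}}$ plus right-continuity of paths gives $\psi(X_\eta)=\psi^{\text{\it r\'{e}.}}(X_\eta)$) coincides with yours, but from there the paper never touches the Doob--Meyer compensator: it takes an \emph{optimal} randomized stopping time $\bar{\mathbb{P}}$ for \eqref{eqn:superharmonic} (existence comes from compactness of $\mathcal{T}(\delta_x)$), uses the dynamic programming argument of Theorem \ref{thm:verification} to conclude that the optimal $T$ lands on the contact set and hence satisfies $T\geq\eta$, and then sandwiches everything with the supermedian (optional sampling for supermartingales) property: $\psi^{\text{\it r\'{e}.}}(x)\geq\mathbb{E}^{\bar{\mathbb{P}}}[\psi^{\text{\it r\'{e}.}}(X_{T\wedge\eta})]\geq\mathbb{E}^{x}[\psi^{\text{\it r\'{e}.}}(X_{\eta})]\geq\mathbb{E}^{\bar{\mathbb{P}}}[\psi^{\text{\it r\'{e}.}}(X_{T})]=\psi^{\text{\it r\'{e}.}}(x)$, which yields \eqref{eqn:SH-eta} and \eqref{eqn:equal-in-harmonic} simultaneously. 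You instead invoke the classical structure theorem that the compensator $A^\psi$ of the Snell envelope grows only on the contact set, deduce that $(G^\psi_{t\wedge\eta})_t$ is a bounded martingale, and conclude by optional sampling. Both arguments are sound; yours is the textbook route and packages both conclusions into one martingale statement, whereas the paper's sandwich deliberately avoids importing the fine Mertens/El Karoui theory --- the authors explicitly remark that ``the theory is stated with slightly different assumptions, so we reproduce the results we need,'' so the compensator-charges-only-the-contact-set claim is precisely the kind of external input they chose to circumvent. If you keep your route, you should make that step self-contained (or verify carefully that the regularity hypotheses of the cited theorem --- class D, right-continuity, regularity of the reward $\psi(X_t)$ --- are met in this Feller setting), and tighten the loose justification that $\mathbb{F}$-martingales under $\mathbb{P}^{x}$ remain $\bar{\mathbb{F}}$-martingales under $\bar{\mathbb{P}}$, which follows from the defining property ${\pi^\Omega}_\#\bar{\mathbb{P}}=\mathbb{P}^{x}$ of randomized stopping times rather than from an ``independence'' of the randomization.
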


	\begin{proof}
	The proof is standard, but we give it here for completeness.
		First,  by continuity of $\psi$ and $\psi^{\text{\it r\'{e}.}}$ from \ref{itm:Feller}, and right-continuity of the paths of $X_t$, we have that $\psi^{\text{\it r\'{e}.}}(X_{\eta})=\psi(X_{\eta})$.  
		 To check that  $\eta$ attains the supremum of \eqref{eqn:superharmonic}, notice that if  $\bar{\mathbb{P}}\in \mathcal{T}(\delta_x)$ is an optimal randomized stopping time for \eqref{eqn:superharmonic}, then, $\bar{\mathbb{P}}$ almost surely, $\psi(X_{T})=\psi^{\text{\it r\'{e}.}}(X_{T})$, by the dynamic programming principle as in the proof of Theorem \ref{thm:verification}, hence $T\geq \eta$. Therefore,  the supermedian property implies $\mathbb{E}^{\bar{\mathbb{P}}}[\psi^{\text{\it r\'{e}.}}(X_{T})]\leq \mathbb{E}^x[\psi^{\text{\it r\'{e}.}}(X_{\eta})]$, showing $\eta$ attains the supremum of \eqref{eqn:superharmonic}, namely, $\psi^{\text{\it r\'{e}.}}(x)=\mathbb{E}^x[\psi^{\text{\it r\'{e}.}}(X_{\eta})]$.  Using the supermedian property again we get for any randomized stopping time $\bar{\mathbb{P}}\in \mathcal{T}(\delta_x)$,
		 $$
			\psi^{\text{\it r\'{e}.}} (x) \ge \mathbb{E}^{\bar{\mathbb{P}}} \big[\psi^{\text{\it r\'{e}.}}(X_{T\wedge\eta})\big]\geq	\mathbb{E}^x\big[\psi^{\text{\it r\'{e}.}}(X_{\eta})\big]=\psi^{\text{\it r\'{e}.}}(x),
		$$
		proving \eqref{eqn:SH-eta} and \eqref{eqn:equal-in-harmonic}.
\end{proof}

	We now normalize the value process $G^\psi$ by the r\'{e}duite of $\psi$.
	The following provides a  key ingredient in our dual attainment argument, which generalizes Proposition 4.6 of \cite{GKP-Monge} with essentially the same proof in this more general setting.  
 	\begin{proposition}\label{prop:psi_normalization}
 		Suppose {\normalfont \ref{itm:lower_semicontinuity}, \ref{itm:bounded_stopping}, \ref{itm:Feller}, \ref{itm:submartingale}}, and $\psi\in C_b({O}_\mathfrak{C})$.  If we let $\bar{\psi}=\psi-\psi^{\text{\it r\'{e}.}}$ then we have 
 		$$
 			G_t^{\bar{\psi}} = G_t^{\psi}-\psi^{\text{\it r\'{e}.}}(X_t).
 		$$
 	\end{proposition}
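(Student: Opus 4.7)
The plan is to prove the identity by establishing matching inequalities, combining the supermartingale property of $\psi^{\text{\it r\'{e}.}}(X)$ with a substitution lemma on stopping times built from the contact set of $\psi^{\text{\it r\'{e}.}}$.

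For the lower bound $G_t^{\bar{\psi}} \geq G_t^\psi - \psi^{\text{\it r\'{e}.}}(X_t)$, I would exploit that $\psi^{\text{\it r\'{e}.}}(X)$ is a $\mathbb{P}^\mu$-supermartingale (immediate from $\psi^{\text{\it r\'{e}.}}$ being supermedian together with the strong Markov property under \ref{itm:Feller}). For any $\sigma \in \mathcal{S}_t$, this yields $\mathbb{E}^{\mathbb{P}^\mu}[\psi^{\text{\it r\'{e}.}}(X_\sigma)\mid\mathcal{F}_t] \leq \psi^{\text{\it r\'{e}.}}(X_t)$, hence
\[
\mathbb{E}^{\mathbb{P}^\mu}[\bar{\psi}(X_\sigma) - S_\sigma \mid \mathcal{F}_t] \geq \mathbb{E}^{\mathbb{P}^\mu}[\psi(X_\sigma) - S_\sigma \mid \mathcal{F}_t] - \psi^{\text{\it r\'{e}.}}(X_t),
\]
and taking the supremum over $\sigma$ gives the inequality.

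For the reverse inequality, set $\eta := \inf\{s \geq t : \psi(X_s) = \psi^{\text{\it r\'{e}.}}(X_s)\}$, and note that continuity of $\psi$ and $\psi^{\text{\it r\'{e}.}}$ together with right-continuity of paths gives $\bar{\psi}(X_\eta) = 0$. The key substitution is that replacing any $\sigma \in \mathcal{S}_t$ by $\sigma \wedge \eta$ does not decrease the objective, which follows from
\[
\mathbb{E}^{\mathbb{P}^\mu}\big[(\bar{\psi}(X_{\sigma \wedge \eta}) - S_{\sigma \wedge \eta}) - (\bar{\psi}(X_\sigma) - S_\sigma) \,\big|\, \mathcal{F}_t\big] = \mathbb{E}^{\mathbb{P}^\mu}\big[\mathbf{1}_{\{\sigma > \eta\}}(S_\sigma - S_\eta - \bar{\psi}(X_\sigma)) \,\big|\, \mathcal{F}_t\big],
\]
which is nonnegative after conditioning on $\mathcal{F}_\eta$: since $\{\sigma > \eta\} \in \mathcal{F}_\eta$, the submartingale property \ref{itm:submartingale} gives $\mathbb{E}^{\mathbb{P}^\mu}[S_\sigma - S_\eta \mid \mathcal{F}_\eta] \geq 0$ on that event, while $-\bar{\psi}(X_\sigma) \geq 0$ since $\bar{\psi} \leq 0$.

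Having reduced to stopping times of the form $\sigma \wedge \eta$, I would invoke the mean value property \eqref{eqn:equal-in-harmonic} of Lemma~\ref{lem:superharmonic_snell}, applied via the strong Markov property at time $t$, to obtain the identity $\mathbb{E}^{\mathbb{P}^\mu}[\psi^{\text{\it r\'{e}.}}(X_{\sigma \wedge \eta})\mid\mathcal{F}_t] = \psi^{\text{\it r\'{e}.}}(X_t)$. Substituting,
\[
\mathbb{E}^{\mathbb{P}^\mu}[\bar{\psi}(X_{\sigma\wedge\eta}) - S_{\sigma\wedge\eta} \mid \mathcal{F}_t] = \mathbb{E}^{\mathbb{P}^\mu}[\psi(X_{\sigma\wedge\eta}) - S_{\sigma\wedge\eta} \mid \mathcal{F}_t] - \psi^{\text{\it r\'{e}.}}(X_t) \leq G_t^\psi - \psi^{\text{\it r\'{e}.}}(X_t),
\]
and taking the supremum yields $G_t^{\bar{\psi}} \leq G_t^\psi - \psi^{\text{\it r\'{e}.}}(X_t)$. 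The main technical hurdle is the substitution inequality, which requires carefully verifying that $\{\sigma > \eta\} \in \mathcal{F}_\eta$ and that optional stopping for the submartingale $S$ applies after restarting at $\eta$, so that trading $\sigma$ for $\sigma \wedge \eta$ only helps in the $\bar{\psi}$ problem.
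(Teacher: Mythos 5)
Your proof is correct and follows essentially the same route as the paper: the lower bound via the supermedian property of $\psi^{\text{\it r\'{e}.}}$, and the upper bound via the hitting time $\eta$ of the contact set, the mean value property of Lemma \ref{lem:superharmonic_snell}, the sign $\bar{\psi}\leq 0$ with $\bar{\psi}(X_\eta)=0$, and the submartingale property of $S$ between $\eta$ and $\sigma$. Your "truncation does not decrease the objective" step is just a reorganization of the paper's three-term decomposition of the optimal value for $G_t^{\bar{\psi}}$.
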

 	\begin{proof}
 		First, we show $\geq$, which is an easy step and does not require $S$ to be a submartingale.   We let $\overline{\mathbb{P}}\in \mathcal{T}_t(\mu)$ attain the supremum of the definition of $G_t^\psi$, (\ref{eqn:value_process}), i.e.\
 		$$
 			G_t^\psi = \mathbb{E}^{\overline{\mathbb{P}}}\big[\psi(X_{T})-S_{T} \big| \mathcal{F}_t\big],
 		$$
 		thus by the supermedian property of $\psi^{\text{\it r\'{e}.}}$ and the definition of $G_t^{\bar{\psi}}$,
 		$$
 			G_t^\psi-\psi^{\text{\it r\'{e}.}}(X_t) \leq \mathbb{E}^{\overline{\mathbb{P}}}\big[\psi(X_{T})-S_{T} -\psi^{\text{\it r\'{e}.}}(X_{T}) \big| \mathcal{F}_t\big]\leq G_t^{\bar{\psi}}.
 		$$

 		For the other direction we let $\eta:= \inf\{s\geq t;\ \psi(X_s)=\psi^{\text{\it r\'{e}.}}(X_s)\}$ as in Lemma \ref{lem:superharmonic_snell}.  We have that $\eta$ is a $\mathcal{F}_t$ stopping time because $X$ is adapted to $\mathcal{F}_t$. We let $\bar{\mathbb{P}}\in \mathcal{T}_t(\mu)$ attain the supremum of the definition of $G_t^{\bar{\psi}}$, then:
 		\begin{itemize}
 		\item by the definition of $G^{\bar{\psi}}$ and using from Lemma \ref{lem:superharmonic_snell} that $\psi^{\text{\it r\'{e}.}}(X_t) = \mathbb{E}^{\bar{\mathbb{P}}}\big[\psi^{\text{\it r\'{e}.}}(X_{T\wedge\eta})\big]$, 
 		$$\mathbb{E}^{\bar{\mathbb{P}}}\big[\bar{\psi}(X_{T\wedge \eta}) - S_{T\wedge \eta}\big| \mathcal{F}_t\big] =\mathbb{E}^{\bar{\mathbb{P}}}\big[\psi(X_{T\wedge \eta}) - S_{T\wedge \eta}\big| \mathcal{F}_t\big] - \psi^{\text{\it r\'{e}.}}(X_t) \leq G_t^{\psi}-\psi^{\text{\it r\'{e}.}}(X_t);$$
 		\item also from Lemma \ref{lem:superharmonic_snell} we have $\mathbb{E}^{\bar{\mathbb{P}}}\big[{\psi}(X_{\eta})\big]=\mathbb{E}^{\bar{\mathbb{P}}}\big[\psi^{\text{\it r\'{e}.}}(X_{\eta})\big]$ so 
 		$$
 			\mathbb{E}^{\bar{\mathbb{P}}}\big[\bar{\psi}(X_T)-\bar{\psi}(X_{T\wedge \eta})\big| \mathcal{F}_t\big]\leq 0;
 		$$
 		\item and by the submartingale property of $S$,
 		$$
 			-\mathbb{E}^{\bar{\mathbb{P}}}\big[S_T-S_{T\wedge \eta}\big| \mathcal{F}_t\big]\leq 0.
 		$$

 	\end{itemize}

 	It follows that
 	\begin{align}
 		G_t^{\bar{\psi}} = &\ \mathbb{E}^{\bar{\mathbb{P}}}\big[\bar{\psi}(X_T)-S_T \big| \mathcal{F}_t\big]\nn\\
 		=&\ \mathbb{E}^{\bar{\mathbb{P}}}\big[\bar{\psi}(X_{T\wedge \eta}) - S_{T\wedge \eta}\big| \mathcal{F}_t\big]\nn\\
 		&\ +\mathbb{E}^{\bar{\mathbb{P}}}\big[\bar{\psi}(X_T)-\bar{\psi}(X_{T\wedge \eta})\big| \mathcal{F}_t\big]\nn\\
 		&\ -\mathbb{E}^{\bar{\mathbb{P}}}\big[S_T-S_{T\wedge \eta}\big| \mathcal{F}_t\big]\nn\\
 		\leq&\ G_t^{\psi}-\psi^{\text{\it r\'{e}.}}(X_t),\nn
 	\end{align}
 	completing the proof.
 	\end{proof}

\section{Dual Attainment in the Discrete Setting}

We illustrate the results in the previous section in the simple case of discrete Markov process.
 	\begin{theorem}\label{thm:finite_attainment}
 		Suppose that $O$ is a discrete set { (i.e. with at most countable elements)}
		 and that $S$ satisfies {\normalfont\ref{itm:lower_semicontinuity}, \ref{itm:bounded_stopping}, \ref{itm:submartingale}}, and {\normalfont\ref{itm:Feller}} holds (i.e. $X$ is Markov).  Furthermore, we assume that $S_{\tau_\mathfrak{C}}$ is uniformly bounded, i.e.\ $S_{\tau_\mathfrak{C}}(\omega)\leq K$ for $\mathbb{P}^\mu$-a.e.\ $\omega$. Then the dual problem is attained at  $\psi^*\in C_b(O_\mathfrak{C})$  with $-K \le \psi^*\leq 0$ and $\psi^*(\mathfrak{C})=0$.  
 	\end{theorem}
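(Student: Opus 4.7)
The plan is to reduce a maximizing sequence for the dual to a uniformly bounded cube $[-K,0]^{O_\mathfrak{C}}$ via the normalizations of Section~\ref{sec:pointwise_bounds}, extract a pointwise limit using Tychonoff compactness (which is available because $O$ is discrete), and verify upper semicontinuity of the dual objective. We may assume $\mu\prec\nu$, since otherwise Corollary~\ref{cor:Strassen} forces $\mathcal{T}(\mu,\nu)=\emptyset$ and both sides of the duality equal $+\infty$, making the statement vacuous. Fix a maximizing sequence $\psi_n\in C_b(O_\mathfrak{C})$ for $\mathcal{D}_S(\mu,\nu)$.

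The normalizations proceed in three steps. First, adding a constant leaves the dual objective invariant (since $G^{\psi+c}_0=G^\psi_0+c$), so we subtract $\psi_n(\mathfrak{C})$ and assume $\psi_n(\mathfrak{C})=0$. Second, Proposition~\ref{prop:psi_normalization} lets us pass to $\bar\psi_n:=\psi_n-\psi_n^{\text{\it r\'{e}.}}$: this gives $\bar\psi_n\le 0$ (from $\psi_n^{\text{\it r\'{e}.}}\ge \psi_n$) and $\bar\psi_n(\mathfrak{C})=0$ (since $\mathfrak{C}$ absorbing forces $\psi_n^{\text{\it r\'{e}.}}(\mathfrak{C})=\psi_n(\mathfrak{C})=0$), and the dual value does not decrease because balayage $\mu\prec\nu$ yields $\int\psi_n^{\text{\it r\'{e}.}}\,d\nu\le\int\psi_n^{\text{\it r\'{e}.}}\,d\mu$, exactly compensating the identity $\mathbb{E}^{\mathbb{P}^\mu}[G^{\bar\psi_n}_0]=\mathbb{E}^{\mathbb{P}^\mu}[G^{\psi_n}_0]-\int\psi_n^{\text{\it r\'{e}.}}\,d\mu$ of Proposition~\ref{prop:psi_normalization}. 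Third, Lemma~\ref{lem:psi_maximization} replaces $\bar\psi_n$ with its saturation $\bar\psi_n^{\text{\it max}}$, preserving the Snell envelope (so the $-\mathbb{E}[G_0^\psi]$ term in the dual is unchanged) while only enlarging $\int\psi\,d\nu$; testing the constant function $\phi\equiv-K$ as a competitor in the supremum defining $\bar\psi_n^{\text{\it max}}$ shows $\bar\psi_n^{\text{\it max}}\ge -K$, where admissibility follows from combining $\bar\psi_n(\mathfrak{C})=0$, the bound $S_{\tau_\mathfrak{C}}\le K$, and the submartingale property of $S$ to derive $-K\le G^{\bar\psi_n}_\sigma+S_\sigma$ for every stopping time $\sigma$, using the identity $G^{\bar\psi_n}_\sigma+S_\sigma=\sup_{\tau\ge\sigma}\mathbb{E}[\bar\psi_n(X_\tau)-(S_\tau-S_\sigma)\,|\,\mathcal{F}_\sigma]$ evaluated at $\tau=\tau_\mathfrak{C}\vee\sigma$.

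The resulting maximizing sequence $\psi_n$ then lies in $\{\psi:O_\mathfrak{C}\to[-K,0],\ \psi(\mathfrak{C})=0\}$, which is compact in the product topology by Tychonoff's theorem since $O_\mathfrak{C}$ is countable. Extract a pointwise subsequential limit $\psi^*$, automatically belonging to $C_b(O_\mathfrak{C})$ in the discrete topology with $\psi^*(\mathfrak{C})=0$ and $-K\le\psi^*\le 0$. It remains to verify upper semicontinuity of the dual objective at $\psi^*$: the integral $\int\psi_n\,d\nu\to\int\psi^*\,d\nu$ by dominated convergence, and a standard $\epsilon$-approximation inside the Snell envelope formula \eqref{eqn:value_process} (using that the integrand $\psi_n(X_\sigma)-S_\sigma$ is dominated by a uniformly integrable envelope from \ref{itm:bounded_stopping} together with the uniform bound $|\psi_n|\le K$) yields $\limsup_n\mathbb{E}^{\mathbb{P}^\mu}[G^{\psi_n}_0]\ge\mathbb{E}^{\mathbb{P}^\mu}[G^{\psi^*}_0]$. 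The most delicate step is the pointwise lower bound $\bar\psi_n^{\text{\it max}}\ge -K$: the interplay between the submartingale property of $S$, the terminal bound $S_{\tau_\mathfrak{C}}\le K$, and the absorbing structure at $\mathfrak{C}$ must be combined carefully to verify admissibility of the constant competitor $\phi\equiv-K$ in Lemma~\ref{lem:psi_maximization}; the residual compactness and upper semicontinuity steps are routine once uniform $L^\infty$ bounds are established.
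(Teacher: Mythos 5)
Your proposal is correct and follows essentially the same route as the paper: normalize by the r\'{e}duite via Proposition~\ref{prop:psi_normalization}, saturate via Lemma~\ref{lem:psi_maximization}, obtain the uniform bounds $-K\le\psi\le 0$ with $\psi(\mathfrak{C})=0$, and conclude by compactness of the cube together with upper semicontinuity of the dual functional as an infimum of continuous linear functionals (your choice of $\tau_\mathfrak{C}\vee\sigma$ in place of the paper's hitting time $\eta$ for the lower bound is an immaterial variant). The one step you assert without proof is that the saturation $\bar\psi_n^{\text{\it max}}$ stays $\le 0$ (which is also what forces $\bar\psi_n^{\text{\it max}}(\mathfrak{C})=0$); as in the paper, this requires first showing $G^{\bar\psi_n}_\sigma+S_\sigma\le 0$ for every stopping time $\sigma$ — a consequence of $\bar\psi_n\le 0$ and the submartingale property of $S$ — and then invoking the standing assumption that every open set is reached by some stopping time with positive probability, so that any admissible competitor $\phi$ in the definition of $\bar\psi_n^{\text{\it max}}$ satisfies $\phi\le 0$ everywhere.
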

 	\begin{proof}
 	Given $\psi\in C_b(O_\mathfrak{C})$ we let $\bar{\psi}=\psi-\psi^{\text{\it r\'{e}.}}$ as in Proposition \ref{prop:psi_normalization}. 
 	From $\bar \psi \le 0$ and the submartingale property of $S$, we have
 			$$
 				G^{\bar{\psi}}_t = \sup_{\bar{\mathbb{P}}\in \mathcal{T}_t(\mu)} \E^{\bar{\mathbb{P}}}\Big[\bar{\psi}(X_T)-S_T\Big|\mathcal{F}_t\Big]  \le  -S_t . 
 			$$
			On the other hand, 		
\begin{align*}
 G^{\bar{\psi}}_t + S_t \geq \E^{\mathbb{P}^\mu}\Big[\bar{\psi}(X_{\eta})-S_{\eta} + S_t \Big|\mathcal{F}_t\Big] \geq - \E^{\mathbb{P}^\mu}\Big[S_\eta - S_t \Big| \mathcal{F}_t\Big]
\end{align*} 
where $\eta:=\inf\{s\geq t;\ \psi(X_s)=\psi^{\text{\it r\'{e}.}}(X_s)\}$ as in Lemma \ref{lem:superharmonic_snell}. Notice that  $$\E^{\mathbb{P}^\mu}\big[S_\eta - S_t \big| \mathcal{F}_t\big] \le K$$ from the boundedness assumption on $S$ with respect to stopping times.  These show that $-K \le G^{\bar\psi}+S \le 0$.

 			We now follow the maximization procedure of Lemma \ref{lem:psi_maximization} to obtain ${\bar{\psi}}^{\text{\it max}}$.  Then $-K\leq {\bar{\psi}}^{\text{\it max}}\leq 0$ follows from $-K \le G^{\bar\psi}+S \le 0$, and that fact that for any open neighborhood $Q$ of $y$, there is $\sigma \in \mathcal{S}$ such $X_\sigma\in Q$ has positive probability.
			Finally, 		Proposition \ref{prop:psi_normalization} combined with Lemma \ref{lem:psi_maximization}    implies that the dual value  for  $\psi^{\text{\it max}}$ is greater than or equal to the value  for $\psi$. 
 		We have restricted the optimization problem to the set of $\psi\in C_b(O_\mathfrak{C})$ where  $-K\leq {\psi}\leq 0$ and $\psi(\mathfrak{C})=0$.   

 		This subset of $C_b(O_\mathfrak{C})$ is compact since either $[-K, 0]^N  \subset \R^N$ or $[-K, 0]^\infty  \subset \R^\infty$ is compact.

 		The dual value is upper-semicontinuous as the infimum of continuous linear functionals, in particular
 		$$
 			-\mathbb{E}^{\mathbb{P}^\mu}\big[G_0^\psi\big]=\inf_{\bar{\mathbb{P}}\in \mathcal{T}(\mu)}\Big\{-\mathbb{E}^{\bar{\mathbb{P}}}\big[\psi(X_T)-S_T\big]\Big\},
 		$$
 		 and dual attainment follows as the maximization of an upper-semicontinuous function on a compact set.  
 	\end{proof}

 \section{Dual Attainment in Hilbert Space}\label{sec:dual_attainment}
This section gives attainment of the dual problem \eqref{eqn:concave_maximization}, which is equivalent to \eqref{eqn:dual}. The dual optimizer is found in a Hilbert space, which we define using a Dirichlet form as follows. 

  Let $O$ be equipped with a positive finite Borel measure $m$. (We ignore the cemetery state $\mathfrak{C}$ as we will assume from here on out  that all functions have value $0$ on $\mathfrak{C}$.)  Assume that there is a symmetric semi-definite (Dirichlet) form 
 \begin{align*}
 \mathcal{E}: L^2(O; m) \times L^2 (O; m) \to \R \cup \{+ \infty\}. 
\end{align*}
 We let 
\begin{align*}
  \mathcal{H}: = \{ u \in L^2 (O; m); \ \mathcal{E}(u, u) < +\infty\}.
\end{align*}
Note $\mathcal{H}$ is in general a Hilbert space with the inner product $\mathcal{E}(u,v) + \int_O u(x) v(x)\, m(dx)$, but we will consider when the Dirichlet form defines a Hilbert space without the additional $L^2$ product, i.e.\ the Poincar\'{e} inequality holds. 
We let $\mathcal{H}^*$ denote the dual space of linear functionals with respect to the $L^2$ inner product.  In particular, we will say a measure $\gamma$ belongs to $\mathcal{H}^*$, if there exists $U^\gamma\in \mathcal{H}$ such that
$$
	\int_O f(x)\gamma(dx)=\mathcal{E}(f,U^\gamma),\ \forall\ f\in C_b(O)\cap \mathcal{H},
$$
in which case $\|\gamma\|_{\mathcal{H}^*}=\|U^\gamma\|_{\mathcal{H}}$.
We  abuse the notation and let $\Delta$ denote the generator of the Dirichlet form, and $\mathcal{H}_0$ be the set of $f\in C_b(O) \cap \mathcal{H}$ such that $\Delta f\in C_b(O)\cap L^2(O;m)$.  In other words, for $f\in \mathcal{H}_0$ and $g\in \mathcal{H}$ we have
$$
	\mathcal{E}(f,g)=\int_O g(x)\big(-\Delta f(x)\big)m(dx).
$$
We suppose that $\Delta$ generates $X_t$ in the sense that for each $f\in \mathcal{H}_0$ and $\sigma\in \mathcal{S}^x$,
\begin{align}\label{eqn:generator}
f(x)=\mathbb{E}^x\Big[f(X_\sigma)-\int_0^\sigma \Delta f(X_t)dt\Big]. 
\end{align}

We say that $g\in LSC_b(O)$ is a supersolution to $\Delta g\leq h$ for $h\in LSC_b(O)$ in the viscosity sense if whenever $f\in \mathcal{H}_0$ touches $g$ from below at $x\in O$, i.e.\ $f(x)=g(x)$ and $f(y)\leq g(y)\ \forall\ y\in O$, then
$$
	\Delta f(x)\leq h(x).
$$
We say that $g\in \mathcal{H}$ is a supersolution to $\Delta g\leq h$ for $h\in L^2(O;m)$ in the weak sense if 
$$
\mathcal{E}(f,g) \geq -\int_O h(x)f(x)m(dx)
$$
for all  $f\in \mathcal{H}_0$ with $f\geq 0$.

We list the assumptions we need for our main results. 
\begin{enumerate}[label=\textbf{(C\arabic*)}]\setcounter{enumi}{-1}

\item \label{assumptionPoincare}\, {\bf [Poincar\'e inequality]} $\exists\ C_p >0$ such that  $\displaystyle \mathcal{E}(u, u) \ge C_p^{-1} \int_O |u(x)|^2 m(dx)$ for all  $u \in \mathcal{H}$.  In particular, we take the norm and inner product on $\mathcal{H}$ to be given solely by $\mathcal{E}$.

 \item\label{assumptionL} \, {\bf [Continuity/Variational Equivalence]} For $h\in LSC_b(O)\cap L^2(O,m)$, we have $\psi \in LSC_b(O)$ satisfies
 $ \Delta \psi \le h$  in the viscosity sense if and only if $\psi\in \mathcal{H}$ is bounded above and satisfies $\Delta \psi \le h$ in the weak sense.

\item\label{assumptionS} \, [{\bf Semi-supermartingale}] There is $D\geq 0$, such that the cost satisfies $\displaystyle \mathbb{E}^{\mathbb{P}^\mu}\big[ S_\sigma\big|\mathcal{F}_t \big] - S_t \le \mathbb{E}^{\mathbb{P}^\mu}\big[ D(\sigma-t)\big|\mathcal{F}_t\big]$ for all $\sigma\in \mathcal{S}_t$ and $\mathbb{P}^\mu$ almost surely.

\item \label{assumptionMeasures}\, [{\bf Balayage}] We have $\mu\in \mathcal{H}^*$ and $\mu\prec \nu$. 
\end{enumerate}

\begin{example}\label{ex:elliptic} \
  	\begin{enumerate}
	\item $O= \R^d$. 
	 		\item The state process, $X_t$, is a $d$-dimensional diffusion process generated by a smooth uniformly elliptic operator $\Delta = \sum_{i=1}^d\sum_{j=1}^d\partial_i a_{ij}\partial_j -\beta$, with killing rate $\beta>0$. 
		\item $\displaystyle \mathcal{E}(u, v) = \int_{\R^d}\Big( \sum_{i=1}^d\sum_{j=1}^d a_{ij}(x) \partial_i u(x) \,  \partial_j v(x) +\beta u(x)v(x)\Big)dx$, where $m$ is Lebesgue measure on $\R^d$ and $\mathcal{H}\equiv H^1(\R^d)$.

	 	\item  The cost process, $S_t:=\int_0^t L(t,X_t)dt$ where $L$ is continuous and
		$$
			0\leq L(t,x)\leq D. 
		$$
	\end{enumerate}
	The killing rate that enforces the Poincar\'{e} inequality also causes $\mathbb{E}^{\mathbb{P}^\mu}[\tau_\mathfrak{C}]<+\infty$. 
\end{example}

\begin{example}\label{ex:riemann}\ 
\begin{enumerate}
 \item $O$ a geodesically convex bounded domain in a non positively curved Riemannian manifold.
 \item  $X_t$ is the Riemannian Brownian motion
 \item $\Delta$ is the Laplace-Beltrami operator with Dirichlet boundary conditions on $\partial O$.
 \item $\mathcal{E}(u, v) = \int_O g\big( \nabla u(x) , \nabla v(x)) vol_g(dx) $ where $g$ is the Riemannian metric and $vol_g$ is the corresponding volume form. 
 \item  The cost process, $S_t=c(X_0,X_t)$ where
		$$
			0\leq \Delta_y c(x,y)\leq D
		$$
		for all $x,y\in O$.

\end{enumerate}
 
\end{example}
In both of these examples one can check the non-trivial \ref{assumptionL} by viscosity solution theory as in \cite{GKP-Monge}.

\begin{example}
In example {\normalfont \ref{ex:elliptic}}, the uniformly elliptic operator can be replaced with the fractional Laplacian, yielding fractional Brownian motion on $\R^d$.
\end{example}
In this section we prove the main result of the paper on the attainment of the dual problem $\mathcal{D}_S$.  We recall that $\mathcal{D}_S(\mu,\nu)$ is defined as a supremum over the class $\mathcal{A}_S\subset C_b({O}_\mathfrak{C})\times B(\bar{\Omega})$, however, by Lemma \ref{lem:value_process}, is equal to the supremum over $\psi\in LSC_b({O}_\mathfrak{C})$ of the concave functional
 	\begin{align}\label{eqn:psi_functional}
 		U(\psi):=\int_{{O}}\psi(y)\nu(dy)-\mathbb{E}^{\mathbb{P}^\mu}\big[G_0^\psi\big].
 	\end{align}

 	We first introduce a subset $\mathcal{B}_{D}\subset LSC_b({O})\cap \mathcal{H}$, which plays a key role in our method. We always extend these functions to $LSC_b(O_\mathfrak{C})$ by $0$ on $\mathfrak{C}$.
	\begin{definition}\label{def:BD}
 		We say that $\psi\in \mathcal{B}_{D}$, if the following properties hold:
		\begin{enumerate}
 			\item $\psi\in LSC_b({O})\cap \mathcal{H}$.   
 			\item $\psi(y)\leq 0$ for all $y\in {O}$.
 			\item $\Delta \psi(x)\le D$  in the weak sense.
					\end{enumerate}
	\end{definition}
	Note that with  assumption \ref{assumptionL}, 
	the last condition follows if  $\Delta \psi(x)\le D$ in the sense of viscosity. 
	 Notice  that 
\begin{align*}
 \hbox{$\mathcal{B}_{D}$ is compact in the weak topology of $\mathcal{H}$}
\end{align*}
	 because of the uniform bound given by \ref{assumptionPoincare},
	 $$
	 	\mathcal{E}(\psi,f)\leq D\int_O |f(x)|m(dx)\leq D\, \sqrt{m(O)}\|f\|_{L^2(O;m)}\leq D\, \sqrt{m(O)}C_p\|f\|_{\mathcal{H}},
	 $$ 
	 and the Banach-Alaoglu theorem. We now prove that  $U:\mathcal{B}_{D} \rightarrow \R$ is concave and upper-semicontinuous. 
	\begin{proposition}\label{prop:compact_semicontinuous}
		We suppose {\normalfont \ref{itm:lower_semicontinuity}, \ref{itm:bounded_stopping}, \ref{itm:Feller}, \ref{itm:submartingale}, \ref{assumptionPoincare}-\ref{assumptionMeasures}}. The map $\psi\mapsto U(\psi)$ is concave and upper-semicontinuous on $\mathcal{B}_{D}$ with the weak topology of $\mathcal{H}$.
	\end{proposition}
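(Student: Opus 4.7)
Concavity is immediate from \eqref{eqn:value_process}: the linear term $\psi\mapsto\int_O\psi\,d\nu$ is affine, and
\begin{equation*}
G_0^\psi(\omega)=\sup_{\bar{\mathbb{P}}\in\mathcal{T}_0(\mu)}\mathbb{E}^{\bar{\mathbb{P}}}\big[\psi(X_T)-S_T\,\big|\,\bar{\mathcal{F}}_0\big](0,\omega)
\end{equation*}
is a pointwise supremum of functionals affine in $\psi$, hence convex in $\psi$; taking $\mathbb{P}^\mu$-expectation preserves convexity, so $-\mathbb{E}^{\mathbb{P}^\mu}[G_0^\psi]$ is concave, and therefore $U$ is concave.

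For upper-semicontinuity with respect to the weak topology of $\mathcal{H}$, my first step is to reduce to the strong topology. Since $U$ is concave and $\mathcal{B}_D$ is convex, each super-level set $\{\psi\in\mathcal{B}_D:U(\psi)\geq c\}$ is convex, and in a Hilbert space a convex set is weakly closed if and only if it is norm-closed (Mazur). Thus it suffices to prove that $U$ is upper-semicontinuous on $\mathcal{B}_D$ for the strong $\mathcal{H}$-topology. So take $\psi_n\to\psi$ in $\|\cdot\|_{\mathcal{H}}$ with $\psi_n\in\mathcal{B}_D$; by \ref{assumptionPoincare}, $\psi_n\to\psi$ in $L^2(O;m)$. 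I would then treat the two summands of $U$ separately. For the linear term, I would exploit the balayage assumption \ref{assumptionMeasures}: fixing $h_D\in\mathcal{H}$ solving $\Delta h_D=D$, the function $\psi_n-h_D$ is supermedian (since $\Delta(\psi_n-h_D)\leq 0$ in the weak sense), and $\mu\prec\nu$ gives
\begin{equation*}
\int_O\psi_n\,d\nu-\int_O\psi_n\,d\mu\;\leq\;\int_O h_D\,d\nu-\int_O h_D\,d\mu,
\end{equation*}
a constant independent of $n$; since $\mu\in\mathcal{H}^*$, $\int_O\psi_n\,d\mu=\mathcal{E}(\psi_n,U^\mu)$ is strongly continuous, controlling the linear piece. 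For the Snell-envelope term, the bound $\Delta\psi_n\leq D$ (viscosity equivalent to weak by \ref{assumptionL}) makes $\psi_n(X_t)-Dt$ a $\mathbb{P}^\mu$-supermartingale through the generator identity \eqref{eqn:generator} (approximating by $\mathcal{H}_0$ test functions); combined with \ref{assumptionS}, which makes $S_t-Dt$ a supermartingale, this yields uniform control of the obstacles $\psi_n(X_\sigma)-S_\sigma$ over $\sigma\in\mathcal{S}$ and permits stability of the optimal-stopping value $G_0^{\psi_n}$ under the $\mathcal{H}$-limit (cf.\ \cite{el1992probabilistic}), giving $\liminf_n\mathbb{E}^{\mathbb{P}^\mu}[G_0^{\psi_n}]\geq \mathbb{E}^{\mathbb{P}^\mu}[G_0^\psi]$.

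The step I expect to be the main obstacle is the lower-semicontinuity of $\psi\mapsto\mathbb{E}^{\mathbb{P}^\mu}[G_0^\psi]$, since it requires transferring an $\mathcal{H}$-limit of the obstacle into convergence of an optimal stopping value evaluated along random stopping times. This is exactly where the interplay of the elliptic regularity from \ref{assumptionL} (strengthening $L^2$ convergence to something compatible with sample paths of $X$), the one-sided Laplacian bound $\Delta\psi\leq D$ (dominating the approximating Snell envelopes by a universal supermartingale), and the semi-supermartingale structure \ref{assumptionS} (preventing the cost process from destabilising the limit) together become essential; dropping any one of them would allow the Snell envelope to degenerate under limits.
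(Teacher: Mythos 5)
Your concavity argument is fine and matches the paper's. The upper\hyp{}semicontinuity argument, however, has a genuine gap. The Mazur reduction (concave $U$, convex $\mathcal{B}_D$, so weak u.s.c.\ follows from strong u.s.c.) is a legitimate idea, but what you then do in the strong topology does not close. For the linear term, your balayage comparison with $h_D$ only yields a \emph{uniform upper bound} on $\int_O\psi_n\,d\nu$ in terms of $\int_O\psi_n\,d\mu$; it does not show $\limsup_n\int_O\psi_n\,d\nu\le\int_O\psi\,d\nu$, which is what u.s.c.\ of that summand requires. For the Snell term, the assertion that the supermartingale dominations ``permit stability of the optimal-stopping value'' is exactly the step that needs a proof and is not supplied: $\mathcal{H}$- or $L^2(O;m)$-convergence of $\psi_n$ gives no control of $\E^{\bar{\mathbb{P}}}[\psi_n(X_T)]$ unless one knows that the stopped law $\rho={X_T}_\#\bar{\mathbb{P}}$ acts as a bounded functional on $\mathcal{H}$; a priori $\rho$ could be singular with respect to $m$, and ``elliptic regularity from \ref{assumptionL}'' does not upgrade $L^2$ convergence to convergence $\rho$-almost everywhere.

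The missing ingredient --- and the actual content of the paper's proof --- is the single estimate that for \emph{every} $\bar{\mathbb{P}}\in\mathcal{T}(\mu)$ the law $\rho$ of $X_T$ satisfies $\mu\prec\rho$ and hence $\|\rho\|_{\mathcal{H}^*}\le\|\mu\|_{\mathcal{H}^*}$. This is proved by testing against $\phi\in C_b(O)\cap\mathcal{H}$ with $\|\phi\|_{\mathcal{H}}=1$: by \ref{assumptionPoincare} and \ref{assumptionL} the r\'eduite $\phi^{\text{\it r\'e.}}$ minimizes the Dirichlet energy over functions above $\phi$, so $\|\phi^{\text{\it r\'e.}}\|_{\mathcal{H}}\le\|\phi\|_{\mathcal{H}}$, and then $\int_O\phi\,d\rho\le\int_O\phi^{\text{\it r\'e.}}\,d\rho\le\int_O\phi^{\text{\it r\'e.}}\,d\mu\le\|\mu\|_{\mathcal{H}^*}$. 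Once this is in hand, every functional $\psi\mapsto\int_O\psi\,d\nu-\E^{\bar{\mathbb{P}}}[\psi(X_T)]+\E^{\bar{\mathbb{P}}}[S_T]$ is affine and \emph{weakly} continuous on $\mathcal{H}$, and $U=\inf_{\bar{\mathbb{P}}\in\mathcal{T}(\mu)}(\cdots)$ is immediately concave and weakly u.s.c.; no Mazur reduction, supermartingale domination, or optimal-stopping stability theorem is needed. I recommend you replace the second half of your argument with this estimate; without it (or an equivalent), neither summand of $U$ is under control.
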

	\begin{proof}
		Concavity and upper-semicontinuity follow from the structure as the infimum over linear functionals.    Since $-\E^{\mathbb{P}^\mu}[G_0^\psi] = \inf_{\bar{\mathbb{P}}\in \mathcal{T}(\mu)} \E^{\bar{\mathbb{P}}}[-\psi(X_T)+S_T]$, it suffices to show that the map
		$$
			\psi\mapsto \E^{\bar{\mathbb{P}}}[\psi(X_T)]
		$$
		is a continuous linear functional on $\mathcal{B}_{D}$ for any $\bar{\mathbb{P}}\in \mathcal{T}(\mu)$.  This fact follows from the fact that $\mu\prec \rho\sim X_T$ and that $\mu\prec \rho$ implies $\|\rho\|_{\mathcal{H}^*}\leq \|\mu\|_{\mathcal{H}^*}$. Indeed, for $\phi\in C_b(O)\cap \mathcal{H}$, we have that $\phi^{\text{\it r\'{e}.}}\in C_b(O)\cap\mathcal{H}$ with 
		$$\|\phi^{\text{\it r\'{e}.}}\|_{\mathcal{H}}^2=  \mathcal{E}(\phi^{\text{\it r\'{e}.}} ,\phi^{\text{\it r\'{e}.}}) \le \mathcal{E}(\phi, \phi) \le \| \phi\|_{\mathcal{H}}^2 $$
		by \ref{assumptionPoincare} and \ref{assumptionL}, since \ref{assumptionL} implies that $\phi^{\text{\it r\'{e}.}}$ minimizes $\mathcal{E}(u,u)$ over functions $u\in \mathcal{H}$ with $u\geq \psi$.
		Thus, by considering $\phi\in C_b(O)\cap \mathcal{H}$ with $\|\phi\|_{\mathcal{H}}=1$, we have
		$$
			\int_{O}\phi(y)\rho(dy)\leq \int_{O}\phi^{\text{\it r\'{e}.}}(y)\rho(dy)\leq \int_{O}\phi^{\text{\it r\'{e}.}}(x)\mu(dx)\leq  \|\mu\|_{\mathcal{H}^*}.
		$$
		Upper-semicontinuity follows from $\mu\in \mathcal{H}^*$, cf.\ \ref{assumptionMeasures}.
		\end{proof}

\begin{proposition}\label{prop:dual_normalization}
 		We suppose {\normalfont \ref{itm:lower_semicontinuity}, \ref{itm:bounded_stopping}, \ref{itm:Feller}, \ref{itm:submartingale}, \ref{assumptionPoincare}-\ref{assumptionMeasures}}.  Given $\psi\in LSC_b(O_\mathfrak{C})$ with $\psi(y)\leq 0$ for all $y\in O$ and $\psi(\mathfrak{C})=0$, we consider $\psi^{\text{\it max}}$ as in  Lemma {\normalfont\ref{lem:psi_maximization}}.  Then in the sense of viscosity,
 		 \begin{align}\label{eqn:uniform_psi_bound}
 			 \Delta \psi^{\text{\it max}}(y)\le D,
 		\end{align}
 		and $\psi^{\text{\it max}}\in \mathcal{B}_D$.

 		Consequentially,  the dual problem $\mathcal{D}_S(\mu,\nu)$ is reduced to $\mathcal{B}_D$, that is,
 		$$
			\mathcal{D}_S(\mu,\nu) = \sup_{\psi\in \mathcal{B}_D}U(\psi).
		$$  for the functional $U(\psi)$ of \eqref{eqn:psi_functional}.
 	 	\end{proposition}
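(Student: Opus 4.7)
My plan is to proceed in three stages: (i) establish the viscosity inequality $\Delta\psi^{\text{\it max}}\le D$, (ii) transfer this to the weak sense via \ref{assumptionL} and verify $\psi^{\text{\it max}}\in\mathcal{B}_D$, and (iii) combine Proposition \ref{prop:psi_normalization} with Lemma \ref{lem:psi_maximization} to reduce the dual supremum to $\mathcal{B}_D$.

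For (i), I would take a test function $f\in\mathcal{H}_0$ touching $\psi^{\text{\it max}}$ from below at $x\in O$. Working under the Markov measure $\mathbb{P}^x$, Lemma \ref{lem:psi_maximization} gives $G^{\psi^{\text{\it max}}}=G^\psi$, so the defining constraint reads $\psi^{\text{\it max}}(X_\sigma)\le G^{\psi^{\text{\it max}}}_\sigma+S_\sigma$ for every $\sigma\in\mathcal{S}^x$. Because \ref{assumptionS} makes $(G^{\psi^{\text{\it max}}}_t+S_t-Dt)_t$ a supermartingale, taking $\sigma=\sigma_r\wedge\tau_\mathfrak{C}$ (the first exit from a ball $B_r(x)$), using $f\le\psi^{\text{\it max}}$, and applying the generator identity \eqref{eqn:generator} yields
$$
f(x)+\mathbb{E}^x\Big[\int_0^\sigma\Delta f(X_t)\,dt\Big]=\mathbb{E}^x[f(X_\sigma)]\le G^{\psi^{\text{\it max}}}_0(x)+D\,\mathbb{E}^x[\sigma].
$$
Dividing by $\mathbb{E}^x[\sigma]$ and sending $r\to 0$ produces $\Delta f(x)\le D$ provided $G^{\psi^{\text{\it max}}}_0(x)=\psi^{\text{\it max}}(x)$ at contact points.

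The main obstacle is exactly this identification $G^{\psi^{\text{\it max}}}_0(x)=\psi^{\text{\it max}}(x)$. My strategy is to show that the map $y\mapsto G^{\psi^{\text{\it max}}}_0(y)$, viewed as the Snell-envelope value function of the Markov problem started at $y$, lies in $C_b(O_\mathfrak{C})$ (using the Feller regularity assumed in \ref{itm:Feller} together with the bounded expected growth rate of $S$ from \ref{assumptionS}, extending the continuity known for the r\'eduite), and that by the strong Markov property it satisfies $G^{\psi^{\text{\it max}}}_0(X_\sigma)\le G^{\psi^{\text{\it max}}}_\sigma+S_\sigma$ pathwise. That inclusion makes $G^{\psi^{\text{\it max}}}_0$ admissible in the supremum defining $\psi^{\text{\it max}}$, hence $G^{\psi^{\text{\it max}}}_0\le\psi^{\text{\it max}}$, while the opposite inequality is immediate from $\tau=0$ in the Snell envelope. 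If continuity fails, I would instead approximate $G^{\psi^{\text{\it max}}}_0$ from below by continuous minorants as in the proof of Lemma \ref{lem:value_process}.

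For (ii), $\psi^{\text{\it max}}\in LSC_b(O)$ by construction and is bounded above by $0$ on $O$ (as in Theorem \ref{thm:finite_attainment}, since $G^\psi_\sigma+S_\sigma\le 0$ and every open set is reachable by some stopping time), so \ref{assumptionL} transfers the viscosity supersolution to the weak sense and yields $\psi^{\text{\it max}}\in\mathcal{H}$; hence $\psi^{\text{\it max}}\in\mathcal{B}_D$. For (iii), Lemma \ref{lem:value_process} lets me restrict the dual supremum to $\psi\in C_b(O_\mathfrak{C})$. Setting $\bar\psi:=\psi-\psi^{\text{\it r\'e.}}\in C_b(O_\mathfrak{C})$, which is nonpositive, Proposition \ref{prop:psi_normalization} and balayage \ref{assumptionMeasures} give
$$
U(\bar\psi)-U(\psi)=\int_O\psi^{\text{\it r\'e.}}\,d\mu-\int_O\psi^{\text{\it r\'e.}}\,d\nu\ge 0
$$
since $\psi^{\text{\it r\'e.}}$ is supermedian. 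Applying Lemma \ref{lem:psi_maximization} to $\bar\psi$ produces $\bar\psi^{\text{\it max}}\ge\bar\psi$ with $G^{\bar\psi^{\text{\it max}}}=G^{\bar\psi}$, so $U(\bar\psi^{\text{\it max}})\ge U(\bar\psi)\ge U(\psi)$; stages (i)–(ii) give $\bar\psi^{\text{\it max}}\in\mathcal{B}_D$, completing the reduction $\mathcal{D}_S(\mu,\nu)=\sup_{\psi\in\mathcal{B}_D}U(\psi)$.
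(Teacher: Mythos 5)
Stages (ii) and (iii) of your plan are sound and match the paper's reduction (normalize by the r\'eduite via Proposition \ref{prop:psi_normalization}, pass to $\psi^{\text{\it max}}$ via Lemma \ref{lem:psi_maximization}, then invoke \ref{assumptionL}). The gap is in stage (i), precisely at the step you flag as the main obstacle: the identification $G_0^{\psi^{\text{\it max}}}(x)=\psi^{\text{\it max}}(x)$. Your argument for $G_0^{\psi^{\text{\it max}}}\le\psi^{\text{\it max}}$ rests on the pathwise inequality $G_0^{\psi^{\text{\it max}}}(X_\sigma)\le G_\sigma^{\psi^{\text{\it max}}}+S_\sigma$, justified ``by the strong Markov property.'' That identity is special to additive Markovian costs such as $S_t=\int_0^tL(X_s)\,ds$, for which $G_t^\psi+S_t$ is a function of $X_t$ alone. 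For a general adapted submartingale cost --- in particular for $S_t=c(X_0,X_t)$, one of the paper's two motivating examples and squarely within the hypotheses here --- the continuation value $G_\sigma^\psi+S_\sigma=\sup_{\tau\ge\sigma}\mathbb{E}\big[\psi(X_\tau)-(S_\tau-S_\sigma)\,\big|\,\mathcal{F}_\sigma\big]$ depends on the history (on $X_0$), and there is no inequality in either direction between it and the value $G_0^{\psi^{\text{\it max}}}(X_\sigma)$ of the problem restarted afresh at $X_\sigma$. Hence $y\mapsto G_0^{\psi^{\text{\it max}}}(y)$ need not be admissible in the supremum defining $\psi^{\text{\it max}}$, and $\psi^{\text{\it max}}(x)$ can be strictly smaller than $G_0^{\psi^{\text{\it max}}}(x)$; your exit-time computation then only yields $\mathbb{E}^x\big[\int_0^\sigma\Delta f(X_t)\,dt\big]\le G_0^{\psi^{\text{\it max}}}(x)-f(x)+D\,\mathbb{E}^x[\sigma]$, whose first term does not vanish after dividing by $\mathbb{E}^x[\sigma]$. (A secondary issue: continuity of $y\mapsto G_0^{\psi}(y)$ is not among the hypotheses; \ref{itm:Feller} assumes it only for the r\'eduite, i.e.\ for $S=0$, and your fallback of approximating from below repairs continuity but not admissibility.)

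The paper's proof avoids this identification entirely by localizing the contact in path space rather than in $O$: since $\phi$ touches $\psi^{\text{\it max}}$ from below at $x$ and $\psi^{\text{\it max}}$ is maximal, for every $\epsilon,\delta>0$ there are a stopping time $\sigma$ and an event $A$ of positive probability on which $d(X_\sigma,x)<\delta$ and $\phi(X_\sigma)+\epsilon\ge G_\sigma^\psi+S_\sigma$ (otherwise $\phi$ could be raised near $x$ while remaining admissible). Running the process forward from $\sigma$ on $A$, the supermartingale property of $G^\psi$ together with \ref{assumptionS} gives $\mathbb{E}^{\mathbb{P}^\mu}\big[\phi(X_{\sigma+s})\,\big|\,\mathcal{F}_\sigma\big]\le\phi(X_\sigma)+\epsilon+Ds$, and the generator identity then yields $\Delta\phi(x)\le D$ after letting $s,\epsilon,\delta\to0$. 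To salvage your route you would have to restrict to Markovian additive costs; in the stated generality you should replace the identification step by this path-space localization.
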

\begin{proof}
 From Proposition \ref{prop:psi_normalization}, we can always normalize $\psi$ to $\bar{\psi}\leq 0$ with $\bar{\psi}(\mathfrak{C})=0$, after which we normalize to $\psi^{\text{\it max}}$ as in Lemma \ref{lem:psi_maximization}.
 We note that, as in Theorem \ref{thm:finite_attainment}, we have $\psi^{\text{\it max}}(y)\leq 0$ for all $y\in O$. Suppose that $\phi\in \mathcal{H}_0$ touches $\psi^{\text{\it max}}$ from below at $x$.  Then for any $\epsilon>0$ and $\delta>0$  there is $\sigma \in \mathcal{S}$ and a set $A\subset \mathcal{F}_\sigma$ with nonzero probability and $d(X_\sigma,x)<\delta$, such that 
$$
	\phi\big(X_\sigma(\omega)\big)+\epsilon\geq G_\sigma^\psi(\omega)+S_\sigma(\omega)
$$
for $\mathbb{P}^\mu$-a.e.\ $\omega\in A$. Then we have for all $s>0$, using \ref{assumptionS}  that for $\mathbb{P}^\mu$-a.e.\ $\omega \in A$ 
\begin{align*}
	\mathbb{E}^{\mathbb{P}^\mu}\big[\phi(X_{{\sigma}+s})\big|\mathcal{F}_\sigma\big]\leq&\ \mathbb{E}^{\mathbb{P}^\mu}\big[G_{{\sigma}+s}^\psi+S_{{\sigma}+s}\big|\mathcal{F}_\sigma\big]\\
	\leq& \ G^\psi_\sigma + S_\sigma + D s\\
	\leq&\ \phi\big(X_\sigma\big)+\epsilon+Ds.
\end{align*}
Because $X$ is a stationary Feller process with generator $\Delta$,
$$
	\mathbb{E}^{\mathbb{P}^\mu}\Big[\int_\sigma^{\sigma+s}\Delta\phi(X_r)dr\Big|\mathcal{F}_\sigma\Big]\leq Ds + \epsilon
$$ 
for all $s>0$ and $\mathbb{P}^\mu$ a.e.\ $\omega\in A$.  Let $\epsilon, \delta \to 0$ then continuity of $\Delta \phi$ implies that $\Delta \phi(x)\leq D$, and $\Delta \psi^{\text{\it max}}(x)\leq D$ in the sense of viscosity.  By \ref{assumptionL} we have that $\psi^{\text{\it max}}\in \mathcal{B}_D$, and the dual value has not decreased. This completes the proof. 
\end{proof}

We now state our main theorem on attainment of the dual problem, which follows immediately from the two preceding propositions.  
	\begin{theorem}\label{thm:strong_dual} 
		We assume {\normalfont \ref{itm:lower_semicontinuity}, \ref{itm:bounded_stopping}, \ref{itm:Feller}, \ref{itm:submartingale}, \ref{assumptionPoincare}-\ref{assumptionMeasures}}.  Then there is $\psi^*\in \mathcal{B}_D$ that is a maximizer of $\mathcal{D}_S(\mu, \nu)$, that is,
		$$
			\int_{{O}}\psi^*(y)\nu(dy) -\E^{\mathbb{P}^\mu}\big[G^{\psi^*}_0\big]=\mathcal{D}_S(\mu,\nu).
		$$
	\end{theorem}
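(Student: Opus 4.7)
The plan is to assemble Theorem \ref{thm:strong_dual} as an essentially immediate consequence of the two propositions just proved, using the classical variational scheme: restrict to a compact class, invoke upper-semicontinuity, extract a convergent subsequence from a maximizing sequence.

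First I would use Proposition \ref{prop:dual_normalization} to rewrite the dual value as
\[
\mathcal{D}_S(\mu,\nu) \;=\; \sup_{\psi \in \mathcal{B}_D} U(\psi),
\]
where $U(\psi)$ is the functional defined in \eqref{eqn:psi_functional}. This already replaces the a priori large class $LSC_b(O_\mathfrak{C})$ of candidate dual potentials by the geometric class $\mathcal{B}_D$, which is the whole point of the normalization procedure.

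Next I would invoke the compactness of $\mathcal{B}_D$ in the weak topology of $\mathcal{H}$, noted in the excerpt: the bound $\mathcal{E}(\psi,f)\leq D\sqrt{m(O)}\,C_p\,\|f\|_{\mathcal{H}}$ gives a uniform $\mathcal{H}$-norm bound on $\mathcal{B}_D$ via the Poincar\'e inequality \ref{assumptionPoincare}, so the Banach--Alaoglu theorem applies. Taking a maximizing sequence $(\psi_n)_n \subset \mathcal{B}_D$ with $U(\psi_n) \to \mathcal{D}_S(\mu,\nu)$, I extract a subsequence (not relabeled) weakly converging in $\mathcal{H}$ to some $\psi^* \in \mathcal{B}_D$. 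Here I would also note the easy fact that $\mathcal{B}_D$ is weakly closed: the pointwise bound $\psi \le 0$ passes to the weak limit after a routine test-function argument (test against nonnegative functions from a countable dense subset), and the weak-sense inequality $\Delta \psi \le D$ is preserved by weak convergence in $\mathcal{H}$ since it is the statement that $\mathcal{E}(f,\psi) \ge -D\int_O f\, dm$ for all nonnegative $f\in \mathcal{H}_0$, a weakly closed condition on $\psi$.

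Finally I would apply Proposition \ref{prop:compact_semicontinuous}: upper-semicontinuity of $U$ on $\mathcal{B}_D$ with respect to the weak topology of $\mathcal{H}$ yields
\[
U(\psi^*) \;\ge\; \limsup_{n\to\infty} U(\psi_n) \;=\; \mathcal{D}_S(\mu,\nu),
\]
while $\psi^* \in \mathcal{B}_D$ gives the reverse inequality. Hence $\psi^*$ attains the supremum. There is no genuine obstacle at this stage because the two preceding propositions do all the real work; the only mild care needed is verifying weak closedness of $\mathcal{B}_D$ (which I would handle as above) so that the weak limit of the maximizing sequence remains admissible.
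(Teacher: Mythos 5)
Your proposal is correct and follows essentially the same route as the paper's proof: restriction to $\mathcal{B}_D$ via Proposition \ref{prop:dual_normalization}, weak compactness of $\mathcal{B}_D$ in $\mathcal{H}$ from the uniform bound and Banach--Alaoglu, and upper-semicontinuity from Proposition \ref{prop:compact_semicontinuous} applied to a maximizing sequence. Your explicit verification that $\mathcal{B}_D$ is weakly closed is a small extra care the paper leaves implicit, but it does not change the argument.
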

	\begin{proof}
		By Proposition \ref{prop:dual_normalization} we may restrict to a maximizing sequence $\psi^i\in \mathcal{B}_D$ with $\psi^i = {\psi^i}^{max}$.  As noted above, $\mathcal{B}_D$ is compact in the weak topology of $\mathcal{H}$, and the result of Proposition \ref{prop:compact_semicontinuous} implies that for a subsequence $\psi^{i^k}\rightharpoonup \psi^\infty\in \mathcal{B}_D$ and
		$$
			U(\psi^\infty)\geq \lim_{k\rightarrow \infty}U({\psi}^{i_k})=\mathcal{D}_S(\mu,\nu)
		$$
		completing the proof.
	\end{proof}

\subsection{When the cost is the expected stopping time}\label{sec:expected_time}

 	The case that $S_t=t\wedge \tau_{\mathfrak{C}}$ is critical for understanding this problem. For a thorough exposition of related results for Brownian motion beginning at the origin in 1D see \cite{monroe1972embedding}. 
 	\begin{proposition}\label{prop:expected_time_transient}
 		We assume {\normalfont \ref{itm:lower_semicontinuity}, \ref{itm:Feller}, \ref{assumptionPoincare}-\ref{assumptionMeasures}}.  There is a unique function  (up to an additive constant) $h\in \mathcal{H}_0$ with $\Delta h=1$ in the weak sense on $O$ such that for any $\bar{\mathbb{P}}\in \mathcal{T}(\mu,\nu)$ we have
 		$$
 			\mathbb{E}^{\bar{\mathbb{P}}}\big[T\big]=\int_{O}h(x)\nu(dx)-\int_Oh(x)\mu(dx).
 		$$
 		In particular, any such $\bar{\mathbb{P}}$ is optimal for cost $S_t=t\wedge \tau_{\mathfrak{C}}$, and the dual problem is solved by $\psi=h$ and $M_t=G^h_t=h(X_t)-S_t$.
 	\end{proposition}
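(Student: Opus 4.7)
My plan is to first construct $h$ by Hilbert-space duality, then apply the generator identity (\ref{eqn:generator}) to derive the expected stopping time formula, and finally to read off the dual optimizer. By \ref{assumptionPoincare}, the norm $\|\cdot\|_{\mathcal{H}}$ given purely by $\mathcal{E}(\cdot,\cdot)$ turns $\mathcal{H}$ into a Hilbert space, and the linear functional $f \mapsto -\int_O f\, dm$ on $\mathcal{H}$ is bounded since $|\int f\, dm| \le \sqrt{m(O)}\, \|f\|_{L^2} \le \sqrt{m(O)\, C_p}\, \|f\|_{\mathcal{H}}$. Riesz representation then yields a unique $h \in \mathcal{H}$ with $\mathcal{E}(f,h) = -\int f\, dm$ for every $f \in \mathcal{H}$, which is precisely $\Delta h = 1$ in the weak sense; uniqueness is the ``up to an additive constant'' in the statement (constants fail to lie in $\mathcal{H}$ by Poincar\'e, so the normalization is automatic in the absorbing setting). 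Upgrading $h$ to $\mathcal{H}_0$ requires showing that $h$ is continuous and bounded on $O$: the identity $\Delta h = 1 \in C_b(O)\cap L^2(O;m)$ is automatic since $m(O)<\infty$, while boundedness and continuity of $h$ itself I would obtain by applying \ref{assumptionL} to $\pm h$ viewed as sub/supersolutions, or, in the concrete settings of Examples \ref{ex:elliptic}--\ref{ex:riemann}, by invoking standard regularity theory for the generator.

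Next, with $h \in \mathcal{H}_0$ in hand and $\Delta h \equiv 1$ on $O$, identity (\ref{eqn:generator}) applied to an arbitrary nonrandomized stopping time $\sigma \in \mathcal{S}^x$ gives
$$
h(x) \;=\; \mathbb{E}^x\big[h(X_\sigma)\big] \;-\; \mathbb{E}^x\big[\sigma \wedge \tau_\mathfrak{C}\big],
$$
using that $\int_0^\sigma \Delta h(X_t)\, dt$ accumulates at rate $1$ on $O$ and freezes once $X$ enters $\mathfrak{C}$ where $h$ vanishes. To extend this identity to randomized stopping times $\bar{\mathbb{P}} \in \mathcal{T}(\mu)$, I would disintegrate $\bar{\mathbb{P}}$ along $\mathbb{P}^\mu$ and apply Fubini, using that $T$ is a genuine stopping time on the enlarged space $(\bar\Omega,\bar{\mathbb{F}})$. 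Integrating against $\mu$ and invoking $X_0 \sim \mu$, $X_T \sim \nu$ then yields $\mathbb{E}^{\bar{\mathbb{P}}}[T] = \int h\, d\nu - \int h\, d\mu$, a quantity that depends only on the marginals; in particular every $\bar{\mathbb{P}} \in \mathcal{T}(\mu, \nu)$ is a minimizer for $S_t = t \wedge \tau_\mathfrak{C}$.

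For the final claim I would set $M_t := h(X_t) - S_t$ and verify directly that $M$ is an $(\Omega, \mathbb{F}, \mathbb{P}^\mu)$-martingale: the generator identity (\ref{eqn:generator}) applied between stopping times $t \le r$ gives
$$
\mathbb{E}^{\mathbb{P}^\mu}\!\big[h(X_r) - h(X_t)\,\big|\, \mathcal{F}_t\big] \;=\; \mathbb{E}^{\mathbb{P}^\mu}\!\big[(r\wedge \tau_\mathfrak{C}) - (t\wedge \tau_\mathfrak{C})\,\big|\, \mathcal{F}_t\big] \;=\; \mathbb{E}^{\mathbb{P}^\mu}\!\big[S_r - S_t\,\big|\, \mathcal{F}_t\big].
$$
Because $h(X_\sigma) - M_\sigma = S_\sigma$ holds \emph{with equality} for every stopping time $\sigma$, the pair $(h, M)$ lies in $\mathcal{A}_S$ and, by Lemma \ref{lem:value_process}, the Snell envelope satisfies $G^h_t = M_t$. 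The dual value at $\psi = h$ is therefore $\int h\, d\nu - \mathbb{E}^{\mathbb{P}^\mu}[M_0] = \int h\, d\nu - \int h\, d\mu$, matching the value of every primal embedding and so confirming that $\psi = h$ attains $\mathcal{D}_S(\mu,\nu)$. The hardest step is the regularity upgrade placing $h$ in $\mathcal{H}_0$ so that (\ref{eqn:generator}) applies pointwise; once this is granted, everything downstream is a routine application of Dynkin's formula and the duality machinery developed in Sections \ref{sec:duality_dynamic_programming}--\ref{sec:pointwise_bounds}.
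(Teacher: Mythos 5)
Your proposal is correct and follows essentially the same route as the paper: construct $h$ as the weak solution of $\Delta h=1$ (the paper calls this immediate from the generator, while you make it explicit via Riesz representation under the Poincar\'e inequality), upgrade regularity to $\mathcal{H}_0$ via \ref{assumptionL}, apply the generator identity (\ref{eqn:generator}) to get $\mathbb{E}^{\bar{\mathbb{P}}}[T]=\int h\,d\nu-\int h\,d\mu$, and observe that $G^h_t=h(X_t)-S_t$ yields dual optimality. The added detail on the martingale verification and the extension to randomized stopping times is consistent with, and fills in, what the paper leaves implicit.
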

 	\begin{proof}
 		Existence and uniqueness (up to additive constant) of such $h$ in $\mathcal{H}$  with $\Delta h=1$, is immediate by  the property of the generator $\Delta$ of a stationary Feller process.  
		From $\Delta h =1$, clearly, $\Delta h\in C_b(O)\cap L^2(O,m)$ and that $h\in C_b(O)$ follows from \ref{assumptionL}.  We then calculate simply that
 		\begin{align*}
 			\mathbb{E}^{\bar{\mathbb{P}}}\big[T\big]= \mathbb{E}^{\bar{\mathbb{P}}}\Big[\int_0^T\Delta h(X_t)dt\Big]=\int_{O}h(x)\nu(dx)-\int_Oh(x)\mu(dx).
 		\end{align*}
 		Taking $\psi=h$, we find that $G^h_t=h(X_t)-S_t$, and since
 		$$
 			 \int_O h(x)\nu(dx)-\mathbb{E}^{\mathbb{P}^\mu}\big[G^h_0\big]=\int_{O}h(x)\nu(dx)-\int_Oh(x)\mu(dx)=\mathbb{E}^{\bar{\mathbb{P}}}\big[T\big],
 		$$
 		 $\psi=h$ is optimal for the dual problem.
 	\end{proof}

\section{The General Twist Condition}\label{sec:generalTwist}

We suppose now that the pair $(A,X)$ is a stationary Feller process with generator $\Delta_{a,x}$, and the cost decomposes as $S_t=\Lambda(A_t,X_t)$.  We assume that $A$ takes values in $\R^d$ and $a\mapsto \Lambda(a,x)$ is  differentiable with $(a,x)\mapsto \nabla_a\Lambda(a,x)$ continuous.

We interpret $A$ as an auxiliary parameter, which will provide structure for the optimal solutions.  In the examples below $A$ might be the time $A_t=t$, the initial position $A_t=X_0$, or a stochastic process coupled with $X$.

The form of the cost is inherited by the value process $G^\psi$.
\begin{lemma}\label{lem:cost_decomposition}
We suppose {\normalfont \ref{itm:lower_semicontinuity}, \ref{itm:bounded_stopping}, \ref{itm:Feller}} and the cost is given by $S_t= \Lambda(A_t,X_t)$ as above. Then for any $\psi\in LSC_b(O)$, 
the value process decomposes as
$$
	G_t^\psi=H^\psi(A_t,X_t),
$$
where $H^\psi:\R^d\times O\rightarrow \R$ is the $(A,X)$-r\'{e}duite of $\psi-\Lambda$.  
\end{lemma}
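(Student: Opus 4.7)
The plan is to exploit the stationary Markov property of the enlarged process $(A,X)$ together with the stationary Feller structure of the réduite construction introduced in \eqref{eqn:superharmonic}. The only thing one needs to verify is that, when the cost only depends on the current state of $(A,X)$, the Snell envelope of $\psi(X_t)-\Lambda(A_t,X_t)$ depends on $\omega$ only through $(A_t(\omega),X_t(\omega))$, so that it coincides with the $(A,X)$-réduite of $(a,x)\mapsto \psi(x)-\Lambda(a,x)$ evaluated at $(A_t,X_t)$.

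First I would fix $\psi\in LSC_b(O)$ and, by approximation as in the proof of Lemma~\ref{lem:value_process}, reduce to the case $\psi\in C_b(O)$ so that $(a,x)\mapsto \psi(x)-\Lambda(a,x)$ is continuous and bounded; the corresponding réduite $H^\psi$ defined by
\begin{align*}
H^\psi(a,x) := \sup_{\sigma\in\mathcal{S}^{(a,x)}}\mathbb{E}^{(a,x)}\bigl[\psi(X_\sigma)-\Lambda(A_\sigma,X_\sigma)\bigr]
\end{align*}
is then well defined and, by \ref{itm:Feller} applied to $(A,X)$, continuous and bounded on $\mathbb{R}^d\times O$. The statement for general $\psi\in LSC_b(O)$ will then follow by monotone approximation as in Lemma~\ref{lem:value_process}, using that the sup of continuous functions is lower semicontinuous.

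Next I would establish the equality $G_t^\psi(\omega)=H^\psi(A_t(\omega),X_t(\omega))$ for $\mathbb{P}^\mu$-a.e.\ $\omega$ by a two-sided argument. For the inequality $G_t^\psi\le H^\psi(A_t,X_t)$, take any $\sigma\in\mathcal{S}_t$, write $\sigma=t+\tau$ where $\tau$ is a stopping time for the time-shifted filtration, and apply the strong Markov property to $(A,X)$ at time $t$ to obtain
\begin{align*}
\mathbb{E}^{\mathbb{P}^\mu}\bigl[\psi(X_\sigma)-\Lambda(A_\sigma,X_\sigma)\,\big|\,\mathcal{F}_t\bigr](t,\omega)=\mathbb{E}^{(A_t(\omega),X_t(\omega))}\bigl[\psi(X_\tau)-\Lambda(A_\tau,X_\tau)\bigr],
\end{align*}
which is bounded by $H^\psi(A_t(\omega),X_t(\omega))$. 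For the reverse inequality one uses the randomized-stopping-time formulation in the second line of \eqref{eqn:value_process}: given $\varepsilon>0$, an $\varepsilon$-optimal stopping rule $\tau_\varepsilon^{(a,x)}$ for $H^\psi(a,x)$ can be concatenated onto the trajectory after time $t$ (relying on the stationarity and the existence of such randomized rules starting from each point) to produce a randomized stopping time in $\mathcal{T}_t(\mu)$ whose conditional expected payoff at time $t$ is within $\varepsilon$ of $H^\psi(A_t,X_t)$. Letting $\varepsilon\to 0$ gives $G_t^\psi\geq H^\psi(A_t,X_t)$.

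The step I expect to be the main obstacle is the measurable selection/concatenation in the lower bound: one must ensure that the family of near-optimal stopping rules $\{\tau_\varepsilon^{(a,x)}\}$ can be patched together into an $\mathcal{F}_t$-adapted randomized stopping time on the original probability space. I would handle this by invoking the regular Feller-type construction on the path space (using that $\psi-\Lambda$ is bounded continuous and the réduite is continuous under \ref{itm:Feller}), reducing the situation to the classical optimal stopping theory for Markov processes (see \cite{el1992probabilistic}), where the value function is known to be Markovian. A clean alternative is to observe directly that the process $t\mapsto H^\psi(A_t,X_t)$ is, by construction, the smallest regular supermartingale dominating $(\psi(X_t)-\Lambda(A_t,X_t))_t$, hence coincides with the Snell envelope $G^\psi$ by uniqueness, thereby giving both inequalities at once.
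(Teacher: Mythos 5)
Your proposal is correct and follows essentially the same route as the paper, which simply observes that by the Markov property of $(A,X)$ the conditional expectation $\mathbb{E}^{\mathbb{P}^\mu}[\psi(X_\sigma)-\Lambda(A_\sigma,X_\sigma)\,|\,\mathcal{F}_t]$ equals $\mathbb{E}^{A_t,X_t}[\psi(X_\sigma)-\Lambda(A_\sigma,X_\sigma)]$, so the Snell envelope is by definition the $(A,X)$-r\'{e}duite evaluated at $(A_t,X_t)$. The paper treats this as an immediate restatement of the definition; your two-sided argument and the remark about identifying both sides as the smallest regular supermartingale dominating the reward just make the same identification explicit.
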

\begin{proof}
	This lemma is simply a restatement of the definition of $G^\psi$ under the additional structure given by $S_t=\Lambda(A_t,X_t)$. Indeed,
	\begin{align*}	
		G_t^\psi=&\ \sup_{\sigma \in \mathcal{S}_t}\mathbb{E}^{\mathbb{P}^\mu}\big[\psi(X_\sigma)-\Lambda(A_\sigma,X_\sigma)|\mathcal{F}_t\big]\\
		=&\ \sup_{\sigma \in \mathcal{S}^{A_t,X_t}}\mathbb{E}^{A_t,X_t}\big[\psi(X_\sigma)-\Lambda(A_\sigma,X_\sigma)\big],
	\end{align*}		
	which is the definition of the r\'{e}duite of $\psi-\Lambda$.
\end{proof}

We assume that $\Lambda$ satisfies a $(A,X)$-twist condition, namely,  we suppose that:
\begin{enumerate}[label=$\mathbf{D\arabic*}$]\setcounter{enumi}{-1}
\item\label{itm:twist}  For $\sigma\in \mathcal{S}^{a,x}$, the equation
\begin{align}\label{eqn:twist}
	\mathbb{E}^{a,x}\big[\nabla_a \Lambda(A_\sigma,X_\sigma)\big]=\nabla_a\Lambda (a,x)
\end{align}
implies $\sigma=0$.
\end{enumerate}

If $\psi$ is continuous then $H^\psi$ is the continuous viscosity solution of the quasivariational inequality:
\begin{align}\label{eqn:quasivariational_inequality}
	\max\big\{\Delta_{a,x} H^\psi(a,x),\psi(x)-\Lambda(a,x) -H^\psi(a,x) \big\}\leq 0.
\end{align}

Rather than giving details on the processes, we make an assumption directly on solutions of (\ref{eqn:quasivariational_inequality}):
\begin{enumerate}[label=$\mathbf{D\arabic*}$]
	\item\label{itm:differentiable} For any $\psi\in \mathcal{B}_D$ and $\bar{\mathbb{P}}\in\mathcal{T}(\mu)$ that maximizes
	$$
		\mathbb{E}^{\bar{\mathbb{P}}}\big[\psi(X_T)-\Lambda(A_T,X_T)\big],
	$$
	we have that the map for $h\in \R^d$,
	$$
		h\mapsto H^\psi(A_T+h,X_T),
	$$
	is  differentiable $\bar{\mathbb{P}}$-almost surely.  
	We also suppose that the stopping time given by
	$$
		\tau^*=\inf\big\{t;\ H^\psi(A_t,X_t)=\psi(X_t)   - \Lambda(A_t, X_t)  \big\},
	$$
	satisfies
	$$
		H^\psi(A_{\tau^*},X_{\tau^*})=\psi(X_{\tau^*})   - \Lambda(A_{\tau^*}, X_{\tau^*}), \ \ \mathbb{P}^\mu\ {\rm almost\ surely}.
	$$
\end{enumerate}

Examples:
\begin{itemize}
	\item The Lagrangian case is when $A_t=t$.  We set $L_\Lambda(t,x)=\Delta_{a,x}\Lambda(t,x) = \partial_t \Lambda(t,x)+\Delta\Lambda(t,x)$ to be the Lagrangian.  Then $\Lambda$ is $(A,X)$-twisted if  $t\mapsto L_\Lambda(t,x)$ is either strictly increasing or decreasing because
	\begin{align*}
	\mathbb{E}^{t,x}\big[\partial_t \Lambda(t+\sigma,X_\sigma)\big]-\partial_t\Lambda (t,x)=\mathbb{E}^{t,x}\Big[\int_t^\sigma \partial_t L_\Lambda(t+r,X_r)dr\Big]
\end{align*}
 is either strictly positive or strictly negative if $\sigma\not=0$.
  This has been studied in \cite{GKPS} for the case when $X_t=W_t$ is $d$-dimensional Brownian motion.  In the case that $t\mapsto L_\Lambda$ is decreasing to obtain the result we must assume that $\mu$ and $\nu$ are disjoint otherwise \ref{itm:differentiable} would fail.

	\item The recent work \cite{gassiat2019free} provides a manner to generalize the previous example to the case where $A$ is  an additive function of $X$, and $a\mapsto L_\Lambda(a,X)$ is strictly increasing.

	\item Considering costs where $A_t=X_0$ and thus $S_t=c(X_0,X_t)$ generalizes the study in \cite{GKP-Monge} where $X_t=W_t$ is $d$-dimensional Brownian motion.
\end{itemize}
 Here, we list additional possible cases:
\begin{itemize}
	\item The previous cases can be mixed with $A=(t,X_0)$.  This makes it easier to satisfy \ref{itm:twist}, although it may be difficult to check differentiability {\normalfont \ref{itm:differentiable}} in general.

	\item Suppose $(A,X)$ is generated by a uniformly elliptic operator, and suppose that $a\mapsto \Delta_{a,x} \nabla_a\Lambda(a,x)$ is strictly monotone.  Then \ref{itm:twist} holds, and we expect differentiability {\normalfont \ref{itm:differentiable}}  from elliptic regularity.

	\item (Possible Example) Taking the process $A_t=\sup_{s\in [0,t]}\{X_s\}$ possibly generalizes the Az\'{e}ma-Yor embedding ($\Lambda(a,x)=a$ and $X_t=W_t$ is one-dimensional Brownian motion).  It is clear that 
	\ref{itm:twist}  holds if $a\mapsto \Lambda(a,x)$ is increasing and strictly concave or convex.  However, satisfying \ref{itm:differentiable} is highly nontrivial in this case, and the assumption \ref{assumptionS} on the cost will not hold, so more work is needed to understand these problems.
\end{itemize}

We now state and prove our final theorem.

\begin{theorem}\label{thm:general_barrier}
	We suppose all the assumptions of the paper, {\normalfont \ref{itm:lower_semicontinuity}, \ref{itm:bounded_stopping}, \ref{itm:Feller}, \ref{itm:submartingale}, \ref{assumptionPoincare}-\ref{assumptionMeasures}}, and in particular {\normalfont \ref{itm:twist}} that $S$ is $(A,X)$-twisted and {\normalfont \ref{itm:differentiable}} hold.  Then there is a unique minimizer to $\mathcal{P}_S(\mu,\nu)$ given by
	$$
		\tau^*=\inf\big\{t;\ H^\psi(A_t,X_t)=\psi(X_t)   -\Lambda(A_t,X_t) \ \big\},
	$$
	where $\psi\in \mathcal{B}_D$ is a dual maximizer.
\end{theorem}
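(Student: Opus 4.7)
The plan is to combine the verification principles of Theorem~\ref{thm:verification} with the twist condition~\ref{itm:twist} via a first-order perturbation in the auxiliary variable $a$. By Theorem~\ref{thm:strong_dual} there exists a dual maximizer $\psi \in \mathcal{B}_D$, and by Theorem~\ref{thm:weak_duality} there exists a primal optimizer $\bar{\mathbb{P}}^* \in \mathcal{T}(\mu,\nu)$. Applying Theorem~\ref{thm:verification} to $(\psi,\bar{\mathbb{P}}^*)$ gives $H^\psi(A_{T^*}, X_{T^*}) = \psi(X_{T^*}) - \Lambda(A_{T^*}, X_{T^*})$ $\bar{\mathbb{P}}^*$-a.s., so $T^* \geq \tau^*$, together with the martingale property of $G^\psi_{t\wedge T^*}$. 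It thus suffices to show $T^* \leq \tau^*$ almost surely; then $\tau^*$ inherits the terminal marginal $\nu$, so $\tau^* \in \mathcal{T}(\mu,\nu)$ is a minimizer, and the same argument applied to any optimizer forces uniqueness.

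The key observation is that $\tau^*$ is itself a maximizer of the auxiliary problem $\sup_{\bar{\mathbb{P}}\in \mathcal{T}(\mu)}\mathbb{E}^{\bar{\mathbb{P}}}[\psi(X_T)-\Lambda(A_T,X_T)]$, since it is the first hitting time of the contact set of the Snell envelope $G^\psi = H^\psi(A_{\cdot}, X_{\cdot})$. Thus assumption~\ref{itm:differentiable} applies at $\tau^*$, giving both the contact identity $H^\psi(A_{\tau^*}, X_{\tau^*}) = \psi(X_{\tau^*})-\Lambda(A_{\tau^*},X_{\tau^*})$ and differentiability of $h \mapsto H^\psi(A_{\tau^*}+h, X_{\tau^*})$ at the origin. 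By Lemma~\ref{lem:cost_decomposition}, $H^\psi(a,x)+\Lambda(a,x)\geq \psi(x)$ pointwise, with equality at $(A_{\tau^*}, X_{\tau^*})$; hence $a \mapsto H^\psi(a, X_{\tau^*})+\Lambda(a, X_{\tau^*})$ is minimized at $a = A_{\tau^*}$, which yields the first-order condition
$$
\nabla_a H^\psi(A_{\tau^*}, X_{\tau^*}) = -\nabla_a \Lambda(A_{\tau^*}, X_{\tau^*}). \qquad (\star)
$$

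Next, invoke the strong Markov property at $\tau^*$: setting $(a_0, x_0) := (A_{\tau^*}, X_{\tau^*})$ and $T' := T^* - \tau^*$, optional stopping of $G^\psi_{t\wedge T^*}$ between $\tau^*$ and $T^*$ together with the contact condition at $T^*$ yields
$$
H^\psi(a_0, x_0) = \mathbb{E}^{a_0, x_0}\big[\psi(X_{T'}) - \Lambda(A_{T'}, X_{T'})\big].
$$
Perturbing $a_0$ to $a_0+h$ and invoking the $a$-translation structure built into the generator $\Delta_{a,x}$ (so that starting from $(a_0+h,x_0)$ amounts to shifting only the $A$-coordinate by $h$), the same transported stopping rule gives the one-sided inequality $H^\psi(a_0+h, x_0) \geq \mathbb{E}^{a_0, x_0}[\psi(X_{T'}) - \Lambda(A_{T'}+h, X_{T'})]$, with equality at $h=0$. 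Differentiating in $h$ at the origin produces
$$
\nabla_a H^\psi(a_0, x_0) = -\mathbb{E}^{a_0, x_0}\big[\nabla_a \Lambda(A_{T'}, X_{T'})\big].
$$
Combining with $(\star)$ yields $\mathbb{E}^{a_0, x_0}[\nabla_a \Lambda(A_{T'}, X_{T'})] = \nabla_a \Lambda(a_0, x_0)$, and the twist condition~\ref{itm:twist} then forces $T' = 0$, i.e.\ $T^* = \tau^*$ $\bar{\mathbb{P}}^*$-almost surely.

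The main technical obstacle is justifying the perturbation step: differentiating the restart identity in $a_0$ requires pushing the $a$-derivative through the expectation even though the path law $\mathbb{P}^{a_0, x_0}$ itself depends on $a_0$. In all the examples of the paper ($A_t = t$, $A_t = X_0$, or additive functionals of $X$) the generator $\Delta_{a,x}$ is translation-invariant in $a$, so perturbing $a_0$ merely translates the $A$-coordinate, leaves $X$ unchanged, and preserves the transported stopping rule; assumption~\ref{itm:differentiable} is the abstract hypothesis designed to absorb this structural feature in general.
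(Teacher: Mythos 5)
Your proposal is correct and follows essentially the same route as the paper: dual attainment plus verification give the contact condition at $T$, the envelope inequality $H^\psi+\Lambda\geq\psi$ gives the first-order condition $\nabla_a H^\psi=-\nabla_a\Lambda$ on the contact set, a perturbation in $h$ of the (super)martingale relation between $\tau^*$ and $T$ transfers this to $\mathbb{E}[\nabla_a\Lambda(A_T,X_T)\,|\,\bar{\mathcal{F}}_{\tau^*}]=\nabla_a\Lambda(A_{\tau^*},X_{\tau^*})$, and \ref{itm:twist} forces $T=\tau^*$. The only difference is cosmetic — you differentiate $\Lambda(A_{T}+h,X_{T})$ via the restart identity where the paper differentiates $H^\psi(A_T+h,X_T)$ and then invokes the contact identity — and you are in fact more explicit than the paper about the $a$-translation structure needed to justify the shifted inequality, which the paper uses implicitly and \ref{itm:differentiable} is meant to encode.
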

\begin{proof}
	We let $\psi$ be a dual maximizer, cf.\ Theorem \ref{thm:strong_dual}.
	For any $\bar{\mathbb{P}}\in \mathcal{T}(\mu)$ that maximizes 
	\begin{align}\label{eqn:maximize}
		\mathbb{E}^{\bar{\mathbb{P}}}\big[\psi(X_T)-\Lambda(A_T,X_T)\big]
	\end{align}
	we have
	$$
		H^\psi(A_T,X_T)=\psi(X_T)-\Lambda(A_T,X_T)
	$$
	holds $\bar{\mathbb{P}}$ a.s.\ by the dynamic programming principle of Theorem \ref{thm:verification}.
	Since $H^\psi(a,x)\geq \psi(x)-\Lambda(a,x)$ it follows that
	$$
	\nabla_a H^\psi\big(A_T,X_T\big)=-\nabla_a\Lambda\big(A_T,X_T\big)
	$$
	at points of differentiability, which occur $\bar{\mathbb{P}}$ a.s.\ by \ref{itm:differentiable}.  Also from \ref{itm:differentiable} we have that 
	$$
		H^\psi(A_{\tau^*},X_{\tau^*})=\psi(X_{\tau^*})-\Lambda(A_{\tau^*},X_{\tau^*}),
	$$
	and $\tau^*\leq T$ holds $\bar{\mathbb{P}}$ almost surely so since $H^\psi(A_t,X_t)$ is a supermartingale,
	$$
		\psi(X_{\tau^*})-\Lambda(A_{\tau^*},X_{\tau^*})\geq   \mathbb{E}^{\bar{\mathbb{P}}}\big[\psi(X_{T})-\Lambda(A_{T},X_{T})\big|\bar{\mathcal{F}}_{\tau^*}\big],
	$$
	and $\tau^*$ is also a maximizer of (\ref{eqn:maximize}). We also have that from the supermartingale property of $H^\psi$, 
	$$
		H^\psi(A_{\tau^*}+h,X_{\tau^*})\geq \mathbb{E}^{\bar{\mathbb{P}}}\big[H^\psi(A_{T}+h,X_{T})\big|\bar{\mathcal{F}}_{\tau^*}\big],
	$$
	and  equality holds at $h=0$. 
 Therefore,  taking a derivative by \ref{itm:differentiable}, we get $\bar{\mathbb{P}}$-almost surely
 \begin{align*}
		-\nabla_a\Lambda\big(A_{\tau^*},X_{\tau^*}\big)=&\ \nabla_aH^\psi\big(A_{\tau^*},X_{\tau^*}\big)\\
		=&\ \mathbb{E}^{\bar{\mathbb{P}}}\big[ \nabla_a H^\psi(A_{T},X_{T})\big|\bar{\mathcal{F}}_{\tau^*}\big]=\mathbb{E}^{\bar{\mathbb{P}}}\big[- \nabla_a\Lambda(A_{T},X_{T})\big|\bar{\mathcal{F}}_{\tau^*}\big].
	\end{align*}
	
	It then follows from \ref{itm:twist} that any such maximizer is given by $T=\tau^*$.  

	Since the optimal stopping time $\bar{\mathbb{P}}^*\in \mathcal{T}(\mu,\nu)$ to $\mathcal{P}_S(\mu,\nu)$ is a maximizer of (\ref{eqn:maximize}) by Theorem \ref{thm:verification}, it  is uniquely given by $\tau^*$.
\end{proof}

\appendix

\section{Recurrent Processes}\label{sec:recurrent}

We will repeat the results of our paper under alternate assumptions for ergodic processes.  Under these assumptions there is no cemetery state, and instead of \ref{itm:bounded_stopping} we require an assumption of coercivity of the cost.
	\begin{enumerate} [label=\textbf{(A\arabic*')}] \setcounter{enumi}{0}
		\item \label{itm:cost_coercive} We suppose that for any $\bar{T}\geq 0$, $S$ is uniformly integrable over stopping times $\tau\leq \bar{T}$, and
		$$
			\liminf_{\bar{T}\rightarrow \infty}\inf_{\tau\in \mathcal{S},\ \mathbb{E}[\tau]\geq\bar{T}} \mathbb{E}\big[S_{\tau}\big]=+\infty.
		$$
	\end{enumerate}

	We first recover Lemma \ref{lem:value_process}, the proof is similar but we mention the details that change.
	\begin{lemma}\label{lem:value_process_recurrant}
		We suppose {\normalfont \ref{itm:lower_semicontinuity}} and {\normalfont \ref{itm:cost_coercive}}.  For every $\psi\in LSC_b(O)$, we have that $(\psi,M^\psi)\in \mathcal{A}_S$ and
		\begin{align}\label{eqn:concave_maximization_recurrant}
			D_S(\mu,\nu) = \sup_{\psi\in {LSC_b(O)}}\Big\{ \int_{O}\psi(y)\nu(dy)-\mathbb{E}^{\mathbb{P}^\mu}\big[G_0^\psi\big]\Big\}.
		\end{align}
		In particular, $\mathbb{P}^\mu$-almost surely  $G_\sigma^\psi \leq M_\sigma$ for all $(\psi,M)\in \mathcal{A}_S$ and $\sigma \in \mathcal{S}$.
	\end{lemma}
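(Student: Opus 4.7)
The plan is to retrace the proof of Lemma \ref{lem:value_process} essentially verbatim, with the only substantive change being how uniform integrability of $(G^\psi_t)_t$ is secured now that we can no longer invoke the bound $T \leq \tau_{\mathfrak{C}}$. The coercivity hypothesis \ref{itm:cost_coercive} will play the role previously played by \ref{itm:bounded_stopping}.

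The first step is to show that the supremum defining $G^\psi_t$ in \eqref{eqn:value_process} is effectively attained over a family of stopping times with uniformly bounded expected value. Fix $\psi \in LSC_b(O)$ and a constant $C$ with $|\psi| \leq C$. By \ref{itm:cost_coercive}, there exists $\bar{T}^*$ such that $\mathbb{E}^{\mathbb{P}^\mu}[S_\sigma] \geq C+1$ whenever $\mathbb{E}^{\mathbb{P}^\mu}[\sigma] \geq \bar{T}^*$. Combined with the trivial lower bound $G^\psi_t \geq \mathbb{E}^{\mathbb{P}^\mu}[\psi(X_t) - S_t \mid \mathcal{F}_t] \geq -C - S_t$ obtained by taking $\sigma = t$, one sees via the tower property that any candidate $\sigma$ with enormous conditional expected value is beaten by a simpler choice. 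Hence the supremum may be restricted to stopping times $\sigma$ with $\mathbb{E}^{\mathbb{P}^\mu}[\sigma] \leq \bar{T}^{**}(\psi)$, and on this restricted family $S_\sigma$ is uniformly integrable by the first part of \ref{itm:cost_coercive}.

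Second, I approximate $\psi$ from below by an increasing sequence $\psi^i \in C_b(O)$ converging pointwise to $\psi$. For each $\psi^i$, the process $(\psi^i(X_t) - S_t)_t$ satisfies the hypotheses of \cite{bismut1979potential} — right-lower-semicontinuity and left-lower-semicontinuity of the predictable projection from \ref{itm:lower_semicontinuity}, and class-D property from the preceding step — so $G^{\psi^i}$ is a regular supermartingale. The sandwich
$$
	\inf_x \psi(x) - \sup_\sigma \mathbb{E}^{\mathbb{P}^\mu}[S_\sigma \mathbf{1}_{\{\mathbb{E}[\sigma] \le \bar{T}^{**}\}}] \leq \mathbb{E}^{\mathbb{P}^\mu}[G_t^\psi] \leq C - \inf_\sigma \mathbb{E}^{\mathbb{P}^\mu}[S_\sigma],
$$
together with the dominated convergence argument of Lemma \ref{lem:value_process} (applied with the optimal $\tau$ for $G^\psi_\sigma$), gives $\mathbb{E}^{\mathbb{P}^\mu}[G^\psi_\sigma - G^{\psi^i}_\sigma] \to 0$ and establishes that $G^\psi$ is a class-D supermartingale. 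The Doob-Meyer decomposition then produces the martingale $M^\psi$ with $M^\psi \geq G^\psi \geq \psi(X_\cdot) - S_\cdot$, verifying $(\psi, M^\psi) \in \mathcal{A}_S$.

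For the concave reformulation \eqref{eqn:concave_maximization_recurrant}, the argument is identical to Lemma \ref{lem:value_process}: for any admissible $(\psi, M) \in \mathcal{A}_S$ and any $\bar{\mathbb{P}} \in \mathcal{T}_t(\mu)$, the martingale property yields $M_t \geq \mathbb{E}^{\bar{\mathbb{P}}}[\psi(X_T) - S_T \mid \bar{\mathcal{F}}_t] $, forcing $M_t \geq G^\psi_t$ and hence $\mathbb{E}^{\mathbb{P}^\mu}[M_0] \geq \mathbb{E}^{\mathbb{P}^\mu}[G^\psi_0]$. Combined with $(\psi, M^\psi) \in \mathcal{A}_S$, both suprema coincide. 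The main obstacle is step one: translating \ref{itm:cost_coercive} — an unconditional expectation statement — into the conditional/pathwise domination required to localize the supremum and invoke Bismut's class-D Snell envelope theory. All remaining arguments are unchanged from the absorbing case.
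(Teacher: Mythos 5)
Your overall architecture (approximate $\psi$ from below by $\psi^i\in C_b$, secure a class-D Snell envelope, apply Doob--Meyer, then run the unchanged comparison argument for \eqref{eqn:concave_maximization_recurrant}) matches the paper, but your first step --- the localization --- has a genuine gap, and you partly admit it yourself. You restrict the supremum in \eqref{eqn:value_process} to stopping times with $\mathbb{E}^{\mathbb{P}^\mu}[\sigma]\leq \bar{T}^{**}$ and then claim $S_\sigma$ is uniformly integrable on this family ``by the first part of \ref{itm:cost_coercive}.'' That is not what \ref{itm:cost_coercive} says: it gives uniform integrability only over stopping times satisfying the \emph{pathwise} bound $\tau\leq\bar{T}$. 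A stopping time with bounded expectation can be unbounded on a small-probability event where $S$ is very large, so uniform integrability does not follow. In addition, as you note, the coercivity hypothesis controls the unconditional expectation $\mathbb{E}[S_\tau]$, while the supremum defining $G^\psi_t$ is over conditional expectations given $\mathcal{F}_t$; the ``tower property'' remark does not produce the conditional domination needed to discard long stopping times, and this translation is exactly the part you leave unproved.

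The paper sidesteps both difficulties by truncating in time rather than in expectation: it introduces
\begin{align*}
G^{\psi^i,\bar{T}}_t=\sup_{\sigma\in\mathcal{S}_t}\Big\{\mathbb{E}^{\mathbb{P}^\mu}\big[\psi^i(X_{\sigma\wedge\bar{T}})-S_{\sigma\wedge\bar{T}}\big|\mathcal{F}_t\big]\Big\},
\end{align*}
so that only stopping times $\sigma\wedge\bar{T}\leq\bar{T}$ appear; the first part of \ref{itm:cost_coercive} then applies verbatim and Bismut's theory makes each $G^{\psi^i,\bar{T}}$ a regular supermartingale, exactly as in Lemma \ref{lem:value_process}. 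The coercivity (second part of \ref{itm:cost_coercive}) is used only once, and only in unconditional form: it guarantees that for each $\sigma$ there is an attaining $\tau\geq\sigma$ with $\mathbb{E}^{\mathbb{P}^\mu}[G^\psi_\sigma]=\mathbb{E}^{\mathbb{P}^\mu}[\psi(X_\tau)-S_\tau]$ and finite expected value, after which
\begin{align*}
\mathbb{E}^{\mathbb{P}^\mu}\big[G^\psi_\sigma-G^{\psi^i,\bar{T}}_\sigma\big]\leq\mathbb{E}^{\mathbb{P}^\mu}\big[\psi(X_\tau)-\psi^i(X_{\tau\wedge\bar{T}})-S_\tau+S_{\tau\wedge\bar{T}}\big]\longrightarrow 0
\end{align*}
as $i\to\infty$ and $\bar{T}\to\infty$ by dominated convergence. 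To repair your argument you should replace your expectation-based localization with this deterministic-time cutoff and double limit; the rest of your proposal then goes through unchanged.
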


	\begin{proof}
		When verifying the properties of $G^\psi$ we must first cut off at a finite time so that the cost is uniformly integral.  We introduce the approximation for $\bar{T}\geq t$,
$$
	G^{\psi,\bar{T}}_t=\sup_{\sigma\in \mathcal{S}_t}\Big\{\mathbb{E}^{\mathbb{P}^\mu}\big[\psi(X_{\sigma\wedge \bar{T}})-S_{\sigma\wedge \bar{T}}]\big]\Big\}.
$$
It is clear from the argument of Lemma \ref{lem:value_process} that $G^{\psi,\bar{T}}$ is a regular supermartingale and $G^{\psi,\bar{T}}_t\geq \psi(X_t)-S_t$ for $t \le \bar T$.  We clearly have that $G^{\psi}\geq G^{\psi^i,\bar{T}}$ since $\sigma\wedge \bar{T}$ is an admissible stopping time.   For $\sigma\in \mathcal{S}$, there is $\tau\geq \sigma$ that attains the value of $G^\psi$ using assumption \ref{itm:cost_coercive}, such that
	$$
		\mathbb{E}^{\mathbb{P}^\mu}\big[G^\psi_\sigma\big] = \mathbb{E}^{\mathbb{P}^\mu}\big[\psi(X_\tau)-S_\tau\big].
	$$
	Then we have that
	$$
		\E^{\mathbb{P}^\mu}\big[G^\psi_\sigma-G^{\psi^i,\bar{T}}_\sigma\big]\leq \E^{\mathbb{P}^\mu}\big[\psi(X_\tau)-\psi^i(X_{\tau\wedge \bar{T}})-S_\tau+S_{\tau\wedge \bar{T}}\big],
	$$
	which converges to zero as $i\rightarrow \infty$ and $\bar{T}\rightarrow \infty$ by \ref{itm:lower_semicontinuity} and the dominated convergence theorem.  The remainder of the proof is the same as Lemma \ref{lem:value_process}.
 	\end{proof}

 	We continue to adapt the proof of Theorem \ref{thm:weak_duality} with \ref{itm:cost_coercive}.

 		\begin{theorem}\label{thm:weak_duality_recurrant}
		We suppose {\normalfont\ref{itm:lower_semicontinuity}} and {\normalfont\ref{itm:cost_coercive}}.
		 If $\mathcal{P}_S(\mu,\nu)<+\infty$,
		$$
			\mathcal{D}_S(\mu,\nu) = \mathcal{P}_S(\mu,\nu),
		$$
		and  there is $\bar{\mathbb{P}}^*\in \mathcal{T}(\mu,\nu)$ such that $\mathcal{P}_S(\mu,\nu) = \mathbb{E}^{\bar{\mathbb{P}}^*}[S_{T}]$.
	\end{theorem}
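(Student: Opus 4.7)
The plan is to mimic the proof of Theorem \ref{thm:weak_duality}, with the coercivity \ref{itm:cost_coercive} playing the role previously played by \ref{itm:bounded_stopping}. The key steps are: restrict to a sub-level set where an a priori bound on $\mathbb{E}[T]$ holds, regain compactness of that sub-level set, reprove lower-semicontinuity of $V(\nu)=\mathcal{P}_S(\mu,\nu)$, and then apply Fenchel--Moreau with Lemma \ref{lem:value_process_recurrant} in place of Lemma \ref{lem:value_process}.

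First I would fix any $K>\mathcal{P}_S(\mu,\nu)$ and, using \ref{itm:cost_coercive}, produce $\bar{T}<\infty$ such that $\mathbb{E}^{\bar{\mathbb{P}}}[S_T]\leq K$ forces $\mathbb{E}^{\bar{\mathbb{P}}}[T]\leq \bar{T}$. This bounds the first moment of $T$ uniformly along any $K$-sublevel minimizing sequence, so the laws of $T$ on $\mathbb{R}^+$ form a tight family. Together with the fixed marginal $X_T\sim \nu$, this gives tightness of the joint laws $(T,X_T)$ under $\bar{\mathbb{P}}$, hence by Baxter--Chacon type compactness of randomized stopping times \cite{baxter1977compactness} one extracts a subsequential limit $\bar{\mathbb{P}}^\infty\in \mathcal{T}(\mu,\nu)$. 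The right-lower-semicontinuity of $t\mapsto S_t$ and left-lower-semicontinuity of its predictable projection from \ref{itm:lower_semicontinuity}, combined with uniform integrability of $S_{\cdot\wedge \bar T'}$ for each $\bar T'$ from \ref{itm:cost_coercive}, yield $\mathbb{E}^{\bar{\mathbb{P}}^\infty}[S_T]\leq \liminf_n \mathbb{E}^{\bar{\mathbb{P}}_n}[S_{T_n}]$: one truncates at $\bar{T}'$ (passing to the limit first), applies lower-semicontinuity of $\bar{\mathbb{P}}\mapsto \mathbb{E}^{\bar{\mathbb{P}}}[S_{T\wedge \bar{T}'}]$, and then sends $\bar{T}'\to\infty$ using \ref{itm:cost_coercive} to control the tail. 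This establishes both attainment and lower-semicontinuity of $V$ on the sub-level set.

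For the duality equality itself I would repeat the convex-analysis argument verbatim. Extend $V$ by $+\infty$ outside $\{\nu':\mathcal{T}(\mu,\nu')\neq \emptyset\}$; convexity follows as before from convex combinations of randomized stopping times, and the sub-level sets of $V$ are weakly compact and lower-semicontinuous by the previous paragraph, so $V$ is convex and lower-semicontinuous on $\mathcal{M}(O)$. The Legendre transform factors through $\mathcal{T}(\mu)$:
\begin{align*}
V^*(\psi)=\sup_{\bar{\mathbb{P}}\in \mathcal{T}(\mu)}\mathbb{E}^{\bar{\mathbb{P}}}\big[\psi(X_T)-S_T\big]=\mathbb{E}^{\mathbb{P}^\mu}\big[G_0^\psi\big],
\end{align*}
where the second equality is exactly Lemma \ref{lem:value_process_recurrant}. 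By Fenchel--Moreau $V=V^{**}$, and a standard approximation $\hat\psi\in C_b(O)$ from below of a general $\psi\in LSC_b(O)$ shows that relaxing $\psi\in C_b(O)$ to $\psi\in LSC_b(O)$ does not increase the dual value; this yields $\mathcal{D}_S(\mu,\nu)=\mathcal{P}_S(\mu,\nu)$.

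The main obstacle is Step 1: without the a priori ceiling $T\leq \tau_{\mathfrak{C}}$, compactness of $\mathcal{T}(\mu,\nu)$ is no longer free, and the limit of a minimizing sequence could a priori lose mass to infinity in $T$. The coercivity \ref{itm:cost_coercive} is tailored precisely to prevent this, but one must handle the cost carefully at the limit: uniform integrability of $S_{\cdot\wedge \bar T'}$ holds for every fixed truncation level $\bar T'$ by \ref{itm:cost_coercive}, yet $S_T$ itself need not be uniformly integrable along a minimizing sequence, so the two-parameter limit $\bar T'\to\infty$ after $n\to\infty$ is what requires care. Once this lower-semicontinuity along weak limits is verified, both the attainment statement and the duality equality follow by the same convex-analytic template as in Theorem \ref{thm:weak_duality}.
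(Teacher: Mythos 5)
Your proposal is correct and follows essentially the same route as the paper: the paper's proof simply notes that \ref{itm:cost_coercive} lets one restrict to stopping times with $\mathbb{E}^{\bar{\mathbb{P}}}[T]\leq \bar{T}$, which restores the compactness and lower-semicontinuity needed to rerun the convex-duality argument of Theorem \ref{thm:weak_duality} with Lemma \ref{lem:value_process_recurrant} in place of Lemma \ref{lem:value_process}. Your write-up supplies more detail on the tightness and the truncation argument for lower-semicontinuity of the cost along weak limits, but the underlying idea is identical.
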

	\begin{proof}
	The proof is identical to that of Theorem \ref{thm:weak_duality}, except that the set $\mathcal{T}(\mu,\nu)$ is no longer compact.  However, \ref{itm:cost_coercive} implies that if $\mathcal{P}_S(\mu,\nu)<+\infty$ then we can restrict to $\bar{\mathbb{P}}\in \mathcal{T}(\mu,\nu)$ with $\mathbb{E}^{\bar{\mathbb{P}}}[T]\leq \bar{T}$ for a constant $\bar{T}$, which is compact.  This implies in particular that $\nu \mapsto  P_S(\mu,\nu)$ is lower-semicontinuous and that if $P_S(\mu,\nu)<+\infty$ then the minimum is attained.
	\end{proof}

We also repeat the following `verification' type result for the dual optimizer. 
 	\begin{theorem}\label{thm:verification_recurrant} 
 		Suppose {\normalfont \ref{itm:lower_semicontinuity}} and {\normalfont\ref{itm:cost_coercive}} and that $\psi\in LSC_b(O)$ attains the maximum of $\mathcal{D}_S(\mu,\nu)$, and $\bar{\mathbb{P}}^*\in \mathcal{T}(\mu,\nu)$ minimizes {\normalfont (\ref{eqn:primal})}.  Then $\bar{\mathbb{P}}^*$ maximizes
		\begin{align}\label{eqn:auxilliary_recurrant}
			\E^{\bar{\mathbb{P}}}\big[\psi(X_T)-S_T\big]
		\end{align}
		over $\bar{\mathbb{P}}\in \mathcal{T}(\mu)$.

		Furthermore, for any maximizer $\bar{\mathbb{P}}\in \mathcal{T}(\mu)$ of {\normalfont (\ref{eqn:auxilliary_recurrant})}, we have
\begin{enumerate}
  \item $G^\psi_{T} = \psi(X_{T})-S_{T}$ holds $\bar{\mathbb{P}}$ almost surely, 
  \item $G^\psi_{t\wedge T}$ is a $(\bar{\Omega},\bar{\mathbb{F}},\bar{\mathbb{P}})$ martingale, i.e., $M^\psi_{t\wedge T} = G^{\psi}_{t\wedge T}$ holds $\bar{\mathbb{P}}$ almost surely for all $t\in \mathbb{R}^+$.
\end{enumerate}
		 	\end{theorem}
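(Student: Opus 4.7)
The plan is to mimic the proof of Theorem \ref{thm:verification} verbatim, with the only structural change being the substitution of Theorem \ref{thm:weak_duality_recurrant} and Lemma \ref{lem:value_process_recurrant} in place of their absorbing-case analogues. Since the recurrent-case duality has already absorbed the essential ergodic subtleties via \ref{itm:cost_coercive}, the verification argument is formal once we have those tools in hand.

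First I would invoke Theorem \ref{thm:weak_duality_recurrant} together with Lemma \ref{lem:value_process_recurrant} to write
\begin{align*}
\sup_{\bar{\mathbb{P}}\in \mathcal{T}(\mu)}\Big\{\E^{\bar{\mathbb{P}}}\big[\psi(X_T)-S_T\big]\Big\}
= \mathbb{E}^{\mathbb{P}^\mu}\big[G_0^\psi\big]
= \mathcal{D}_S(\mu,\nu)
= \mathcal{P}_S(\mu,\nu)
= \E^{\bar{\mathbb{P}}^*}\big[\psi(X_T)-S_T\big],
\end{align*}
where the last equality uses $X_T\sim_{\bar{\mathbb{P}}^*}\nu$ so that $\E^{\bar{\mathbb{P}}^*}[\psi(X_T)]=\int\psi\,d\nu$ together with optimality of $\bar{\mathbb{P}}^*$. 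Hence $\bar{\mathbb{P}}^*$ is a maximizer of \eqref{eqn:auxilliary_recurrant}.

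Next, for any maximizer $\bar{\mathbb{P}}\in \mathcal{T}(\mu)$ of \eqref{eqn:auxilliary_recurrant}, I would exploit two facts: (i) by construction of the Snell envelope, $G^\psi_T\ge \psi(X_T)-S_T$ $\bar{\mathbb{P}}$-a.s.; and (ii) by the supermartingale property of $G^\psi$ (which persists in the recurrent setting by Lemma \ref{lem:value_process_recurrant}) together with the fact that under a maximizer $\mathbb{E}^{\bar{\mathbb{P}}}[T]<\infty$ thanks to \ref{itm:cost_coercive}, we have
$$\mathbb{E}^{\bar{\mathbb{P}}}[G^\psi_T]\le \mathbb{E}^{\bar{\mathbb{P}}}[G^\psi_0]=\mathbb{E}^{\bar{\mathbb{P}}}[\psi(X_T)-S_T].$$
Since an a.s.\ inequality combined with the reverse inequality in expectation forces equality, we conclude $G^\psi_T=\psi(X_T)-S_T$ holds $\bar{\mathbb{P}}$-a.s., giving (1).

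Finally, for part (2), applying the supermartingale property of $G^\psi$ at the bounded stopping time $t\wedge T$ and again at $T$ yields
$$\mathbb{E}^{\bar{\mathbb{P}}}[G^\psi_0]\ge \mathbb{E}^{\bar{\mathbb{P}}}[G^\psi_{t\wedge T}]\ge \mathbb{E}^{\bar{\mathbb{P}}}[G^\psi_T]=\mathbb{E}^{\bar{\mathbb{P}}}[G^\psi_0],$$
so both inequalities are equalities, forcing $t\mapsto G^\psi_{t\wedge T}$ to be a $\bar{\mathbb{P}}$-martingale. Since $M^\psi_{t\wedge T}\ge G^\psi_{t\wedge T}$ with equal expectations at $t=0$ and both being $\bar{\mathbb{P}}$-martingales up to $T$, Doob--Meyer uniqueness gives $M^\psi_{t\wedge T}=G^\psi_{t\wedge T}$ $\bar{\mathbb{P}}$-a.s. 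The main potential obstacle is justifying the optional stopping step $\mathbb{E}^{\bar{\mathbb{P}}}[G^\psi_T]\le \mathbb{E}^{\bar{\mathbb{P}}}[G^\psi_0]$ despite $T$ being unbounded; this is handled by first applying optional stopping at $t\wedge T$, then passing to the limit $t\to\infty$, using the uniform integrability of $(\psi(X_{T})-S_{T})$ over stopping times with $\mathbb{E}^{\bar{\mathbb{P}}}[T]\le\bar{T}$ granted by \ref{itm:cost_coercive} and the boundedness of $\psi$.
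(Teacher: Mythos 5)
Your proposal is correct and follows essentially the same route as the paper, which simply declares the proof identical to that of Theorem \ref{thm:verification}: duality identifies $\bar{\mathbb{P}}^*$ as a maximizer, then the a.s.\ inequality $G^\psi_T\geq\psi(X_T)-S_T$ combined with the reverse inequality in expectation gives (1), and the supermartingale sandwich at $t\wedge T$ gives (2). Your added remark on justifying optional stopping at the unbounded time $T$ via \ref{itm:cost_coercive} is a sensible (and welcome) precaution that the paper leaves implicit.
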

 	\begin{proof}
 		The proof is identical to the proof of Theorem \ref{thm:verification}.
 	\end{proof}

We finally repeat Lemma \ref{lem:psi_maximization}.
 	\begin{lemma}\label{lem:psi_maximization_recurrent} 
 		We suppose {\normalfont\ref{itm:lower_semicontinuity}}, {\normalfont\ref{itm:cost_coercive}} and $\psi\in LSC_b(O)$. We let 
 		$$
 			\psi^{\text{\it max}}(y):=\sup_{\phi\in C_b({O})}\big\{\phi(y);\ \phi(X_\sigma(\omega))\leq G_\sigma^\psi(\omega) +S_\sigma(\omega),\ \forall\ \sigma\in \mathcal{S},\ \mathbb{P}^\mu-a.e.\ \omega\big\}.
 		$$  
 		Then we have the following:
 		\begin{enumerate}[label=\roman*.]
 			\item\label{itm:psi_increases_recurrant} $\psi^{\text{\it max}}(y)\geq \psi(y)$\ for all $y\in O$;
 			\item\label{itm:same_value_process_recurrant} $G^{\psi^{\text{\it max}}}_\sigma= G^\psi_\sigma,\ \forall\ \sigma\in \mathcal{S},\ \mathbb{P}^\mu-a.e.\ \omega$.
 		\end{enumerate}
 	\end{lemma}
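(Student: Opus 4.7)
The plan is to reproduce the argument of Lemma \ref{lem:psi_maximization} essentially verbatim, substituting Lemma \ref{lem:value_process_recurrant} for Lemma \ref{lem:value_process} wherever the supermartingale property of $G^\psi$ is invoked. The cemetery state $\mathfrak{C}$ played no essential role in the earlier proof, so its absence here should not cause any difficulty; the only structural ingredient needed is that $G^\psi$ is a well-defined regular supermartingale satisfying $G^\psi_\sigma \geq \psi(X_\sigma) - S_\sigma$, and this is exactly what \ref{itm:cost_coercive} buys us through Lemma \ref{lem:value_process_recurrant}.

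For the bookkeeping: first observe that $\psi^{\text{\it max}}$ is lower-semicontinuous as the pointwise supremum of continuous functions, and is bounded below since any constant $\phi \leq \inf_{y} \psi(y)$ satisfies $\phi(X_\sigma) \leq \psi(X_\sigma) \leq G^\psi_\sigma + S_\sigma$ and hence lies in the admissible family. For \emph{\ref{itm:psi_increases_recurrant}}, write the lower-semicontinuous $\psi$ as the pointwise supremum of an increasing sequence $\phi^n \in C_b(O)$; each $\phi^n \leq \psi$ pointwise and therefore $\phi^n(X_\sigma) \leq G^\psi_\sigma + S_\sigma$, so every $\phi^n$ is admissible in the defining supremum and $\psi^{\text{\it max}} \geq \sup_n \phi^n = \psi$. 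For \emph{\ref{itm:same_value_process_recurrant}}, the inequality $G^{\psi^{\text{\it max}}} \geq G^\psi$ is immediate from \emph{\ref{itm:psi_increases_recurrant}}. For the reverse direction, note that the pointwise constraint $\phi(X_\sigma(\omega)) \leq G^\psi_\sigma(\omega) + S_\sigma(\omega)$ survives the pointwise supremum defining $\psi^{\text{\it max}}$, hence $\psi^{\text{\it max}}(X_\sigma) \leq G^\psi_\sigma + S_\sigma$ holds $\mathbb{P}^\mu$-a.s.\ for every $\sigma \in \mathcal{S}$. Consequently,
$$
G^{\psi^{\text{\it max}}}_t \;=\; \sup_{\sigma\in \mathcal{S}_t} \mathbb{E}^{\mathbb{P}^\mu}\big[\psi^{\text{\it max}}(X_\sigma) - S_\sigma \,\big|\, \mathcal{F}_t\big] \;\leq\; \sup_{\sigma \in \mathcal{S}_t} \mathbb{E}^{\mathbb{P}^\mu}\big[G^\psi_\sigma \,\big|\, \mathcal{F}_t\big] \;\leq\; G^\psi_t,
$$
the last inequality being the supermartingale property of $G^\psi$ provided by Lemma \ref{lem:value_process_recurrant}.

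The only point requiring genuine care is the existence and supermartingale character of $G^\psi$ in the recurrent setting; this was the sole place where \ref{itm:bounded_stopping} was used in the original proof (to control uniform integrability of $\psi(X_\tau) - S_\tau$), and \ref{itm:cost_coercive} handles it via the cutoff argument carried out in the proof of Lemma \ref{lem:value_process_recurrant}. Beyond that, every step is purely formal manipulation of the Snell envelope and the defining supremum, so no new obstacle arises.
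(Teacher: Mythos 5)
Your proposal is correct and follows the same route as the paper, which itself simply states that the proof is identical to that of Lemma \ref{lem:psi_maximization} in the absorbing case; your expanded bookkeeping (lower-semicontinuity, boundedness below, the two inequalities for part ii, and the substitution of Lemma \ref{lem:value_process_recurrant} for the supermartingale property) matches the original argument step for step.
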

 	\begin{proof}
 		The proof is identical to Lemma \ref{lem:psi_maximization}.
 	\end{proof}

 	\subsection{ Dual attainment}
 	We now assume $m=\gamma$ is the invariant distribution $O$ of the process $X_t$.  	
	The invariant measure $\gamma$ satisfies
 	\begin{align}\label{eqn:invariant_measure}
 		\int_O \Delta \psi(x)\gamma(dx)=0
 	\end{align}
 	for all $\psi\in \mathcal{H}_0$.  The results of Section \ref{sec:pointwise_bounds}  hold
	 and Theorem \ref{thm:finite_attainment} follows if we assume the discrete Markov chain has finite recurrent time between any two points.
 We replace assumption \ref{assumptionPoincare}  with the following:
	\begin{enumerate}[label=\textbf{(C\arabic*')}]\setcounter{enumi}{-1}

	\item\label{assumptionPoincare_recurrent} {\bf [Poincar\'e inequality']} $\exists\ C_p>0$ such that
	$$
		\mathcal{E}(u,u)\geq C_p^{-1}\int_O \big|  u(x)\big|^2\gamma(dx)
	$$
	for all $u\in \mathcal{H}$ with $\int_O u(x)\gamma(dx)=0$.
	\end{enumerate}
	Equation (\ref{eqn:invariant_measure}) implies that the superharmonic functions are all constant make the balayage assumption of \ref{assumptionMeasures} trivial.
		 We need a stronger assumption on $\nu$:
 	\begin{enumerate}[label=\textbf{(C\arabic*')}]\setcounter{enumi}{2}
	\item\label{assumptionMeasures_recurrent} We suppose that $\mu\in \mathcal{H}^*$ and that $\frac{d\nu}{d\gamma}\in C_b(O)$.
	\end{enumerate}
	We  also  assume a maximum principle type property:
 	\begin{enumerate}[label=\textbf{(C\arabic*')}]\setcounter{enumi}{3}
	\item\label{assumptionBoundedProcess_recurrent}  We suppose there is a constant $\lambda$ such that if $\Delta u\leq 1$ for $u\in LSC_b$ 
	and  $\int_O u(x)\gamma(dx) =0$ then
	$
		u(x)\geq -\lambda
	$ for all $x\in O$.
	\end{enumerate}

	We list a few examples of ergodic processes:
 	\begin{example}\ 
 	\begin{enumerate}
 		\item $O\subset \R^d$ is open and bounded with smooth boundary.
 		\item $X$ is reflecting Brownian motion. The generator, $\Delta$, is the Laplacian with Neumann boundary conditions, i,e,\ the set $\mathcal{H}_0$ are the functions with $\Delta h \in C(\overline{O})$ and $\nabla h \cdot n = 0$ on $\partial O$, where $n$ is the normal vector.
 		\item The Dirichlet form is
 		$$\mathcal{E}(u,v) = \frac{1}{2}\int_{O} \nabla u(x)\cdot \nabla v(x)\, dx,$$ and $\gamma(dx)=\frac{1}{|O|}dx$ is proportional to Lebesgue measure.
 	\end{enumerate}
 	\end{example}
	\begin{example}\ 
 	\begin{enumerate}
 		\item $O$ is a closed Riemannian manifold with unit volume.
 		\item $X$ is Brownian motion with the generator as the Laplace Beltrami operator
 		\item The Dirichlet form is
 		$$\mathcal{E}(u,v) = \int_{O} g\big(\nabla u(x),\nabla v(x)\big)\, \gamma(dx),$$
 		where $\gamma$ is the volume form. 
 	\end{enumerate}
 	\end{example}

Here is a possible additional case:
 	\begin{example}\ 
 	\begin{enumerate}
 		\item $O= \R^d$.
 		\item $X_t$ is Brownian motion with confining potential $V$ that is smooth and coercive (i.e., the Ornstein-Uhlenbeck process for $V(x)=\frac{1}{2}|x|^2$). The generator, $\Delta$, is the Laplacian with  drift, i,e,\ 
 		$$
 			\Delta h = \sum_{i=1}^d \frac{\partial^2 h}{\partial x_i^2}-\nabla V\cdot \nabla h.
 		$$
 		\item The Dirichlet form is
 		$$\mathcal{E}(u,v) = \int_{O} \nabla u(x)\cdot \nabla v(x)\, \gamma(dx),$$
		 where $\gamma(dx)=m(dx)=Ce^{-V(x)}dx$. 
 		\item  This example violates {\normalfont\ref{assumptionL}} and {\normalfont \ref{assumptionBoundedProcess_recurrent}}, and would require more careful handling of the behavior as $|x|\rightarrow \infty$.
 	\end{enumerate}
 	\end{example}

 	When we address dual attainment in general, we will not be able to use the upper bound as we did in Section \ref{sec:dual_attainment}.  To circumvent  this we will use a truncation procedure by defining
 	$$
 		\psi^M(x)=\min\big\{\psi(x),M\big\}.
 	$$
 	It is clear that if $\psi$ is lower semicontinuous, bounded below, and a viscosity supersolution, then so is $\psi^M$.  
 	We give an analogy to the space $\mathcal{B}_D$.
 	\begin{definition}\label{def:B_D_recurrent}
 		We say that $\psi\in \mathcal{B}_D'$, if the following properties hold:
 		\begin{enumerate}
 			\item $\psi$ is lower semi-continuous and $\psi^M\in \mathcal{H}$ for all $M$.
 			\item $\int_O \psi(x)\gamma(dx)=0$.
 			\item $\Delta \psi(x)\leq D$ in the sense of viscosity.
 		\end{enumerate}
 	\end{definition}

 	We define the `weak' topology on $\mathcal{B}_D'$ to be the topology of weak convergence in $\mathcal{H}$ for $\psi^M$ for all $M$.  Assumption \ref{assumptionBoundedProcess_recurrent} is necessary for this space to be compact.
	If $\psi \in \mathcal{B}_D'$ then $\psi \geq -D\lambda$ so  $\|\psi^M\|_{L^1(O,\gamma)}$ is uniformly bounded, and
 	\begin{align*}
 		\mathcal{E}(\psi^M,\psi^M)=&\ \int_O \Delta \psi^M(x) \big(M-\psi^M(x)\big)\gamma(dx)\\
 		=&\ D\, M\int_O\gamma(dx)+ \lambda D^2.
 	\end{align*}
	By the uniform bound above, $\mathcal{B}_D'$ is compact with this topology. 

 	We make use of a second regularization of the process by introducing a killing term with rate $\beta>0$.  
	 As $X_t$ is generated by the Dirichlet form $\mathcal{E}(u, v)$, the modified process with the killing rate $\beta$, is generated by the Dirichlet form $\mathcal{E}^\beta (u,v)$ defined as  
 	$$
 		\mathcal{E}^\beta(u,v)=\mathcal{E}(u,v)+\beta\int_O u(x)\, v(x)\, \gamma(dx).
 	$$
 For $X$ that satisfies \ref{assumptionPoincare_recurrent}, $X^\beta$ satisfies \ref{assumptionPoincare}.  We let $S^\beta$ be the cost that is left-continuous and constant on $\mathfrak{C}$.

	For a probability measure $\sigma$ on $O$ with $\frac{d\sigma}{d\gamma}=s\in C_b(O)$, we let $U^\sigma_\gamma\in \mathcal{H}_0$ denote the potential function that satisfies
 	$$
 		\int_O U^\sigma_\gamma(x)\gamma(dx)=0
 	$$
 	and
 	\begin{align}\label{eq:Usigma-gamma}
 		\Delta U^\sigma_\gamma(x) = 1-s(x).
 \end{align}

 	We now give an analogy of Proposition \ref{prop:compact_semicontinuous}.
 	\begin{proposition}\label{prop:compact_semicontinuous_recurrent}
		We suppose {\normalfont \ref{itm:lower_semicontinuity}, \ref{itm:cost_coercive}, \ref{itm:Feller}, \ref{itm:submartingale}, \ref{assumptionPoincare_recurrent}, \ref{assumptionL}, \ref{assumptionS},  and \ref{assumptionMeasures_recurrent}}. 
		 The map $$\psi\mapsto U(\psi) := \int_{{O}}\psi(y)\nu(dy)-\mathbb{E}^{\mathbb{P}^\mu}\big[G_0^\psi\big]$$ is concave and upper-semicontinuous on $\mathcal{B}_{D}'$ with the weak topology.
	\end{proposition}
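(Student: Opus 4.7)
The plan follows that of Proposition~\ref{prop:compact_semicontinuous}, modified to accommodate the absence of a uniform upper bound on elements of $\mathcal{B}_D'$ and the triviality of the balayage order in the ergodic setting. Write
$$U(\psi) = \inf_{\bar{\mathbb{P}} \in \mathcal{T}(\mu)} F_{\bar{\mathbb{P}}}(\psi), \quad F_{\bar{\mathbb{P}}}(\psi) := \int_O \psi \, d\nu - \mathbb{E}^{\bar{\mathbb{P}}}[\psi(X_T) - S_T],$$
so each $F_{\bar{\mathbb{P}}}$ is affine in $\psi$. Concavity of $U$ is then immediate, and the theorem reduces to the upper-semicontinuity of each $F_{\bar{\mathbb{P}}}$ on $\mathcal{B}_D'$ with its weak topology. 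By coercivity \ref{itm:cost_coercive} I may restrict the infimum to those $\bar{\mathbb{P}}$ with $\mathbb{E}^{\bar{\mathbb{P}}}[T] \leq T_0$, so that the expected occupation measure $\theta(A) := \mathbb{E}^{\bar{\mathbb{P}}}\bigl[\int_0^T \mathbf{1}_A(X_s)\, ds\bigr]$ has finite total mass.

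\textbf{Potential representation.} Under \ref{assumptionMeasures_recurrent}, both potentials from \eqref{eq:Usigma-gamma} lie in $\mathcal{H}$: $U_\gamma^\nu \in \mathcal{H}_0$ since $d\nu/d\gamma \in C_b(O)$ makes the right-hand side of \eqref{eq:Usigma-gamma} bounded, and $U_\gamma^\mu \in \mathcal{H}$ follows from $\mu \in \mathcal{H}^*$. Since every $\psi \in \mathcal{B}_D'$ has $\int \psi \, d\gamma = 0$, integration by parts (in the sense of the Dirichlet form) gives $\int \psi \, d\nu = \mathcal{E}(\psi, U_\gamma^\nu)$ and $\int \psi \, d\mu = \mathcal{E}(\psi, U_\gamma^\mu)$. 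Applying the generator identity \eqref{eqn:generator} at the stopping time $T$ yields $\mathbb{E}^{\bar{\mathbb{P}}}[\psi(X_T)] = \int \psi \, d\mu + \int_O \Delta \psi \, d\theta$, so that
$$F_{\bar{\mathbb{P}}}(\psi) \;=\; \mathcal{E}\bigl(\psi,\, U_\gamma^\nu - U_\gamma^\mu\bigr) \;-\; \int_O \Delta \psi \, d\theta \;+\; \mathbb{E}^{\bar{\mathbb{P}}}[S_T].$$

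\textbf{Passing to the limit; main obstacle.} Given a sequence with $\psi_n^M \rightharpoonup \psi^M$ in $\mathcal{H}$ for each $M$, weak convergence at each truncation gives $\mathcal{E}(\psi_n^M, U_\gamma^\nu - U_\gamma^\mu) \to \mathcal{E}(\psi^M, U_\gamma^\nu - U_\gamma^\mu)$. To interchange $n \to \infty$ and $M \to \infty$, I use that by \ref{assumptionBoundedProcess_recurrent} applied to $\psi/D$, every $\psi \in \mathcal{B}_D'$ satisfies $\psi \geq -D\lambda$; combined with the zero-mean constraint this yields the uniform $L^1(\gamma)$-bound $\int \psi^+\, d\gamma \leq D\lambda$, whence the tail integrals $\int (\psi_n - M)^+\, d\sigma$ decay uniformly to $0$ as $M \to \infty$ for $\sigma \in \{\nu, \mu\}$ (using the bounded density of $\nu$ and the $\mathcal{H}^*$-representation of $\mu$). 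The main obstacle is the term $\int_O \Delta \psi \, d\theta$: $\theta$ may be singular with respect to $\gamma$ and $\Delta \psi$ is only a distribution for general $\psi \in \mathcal{B}_D'$, so no direct pairing is available. The workaround is to exploit the one-sided viscosity/weak supersolution bound $\Delta \psi \leq D$, which by \ref{assumptionL} is closed under weak-$\mathcal{H}$ limits of truncations $\psi^M$; this yields upper-semicontinuity (rather than continuity) of $-\int \Delta \psi \, d\theta$, which suffices.
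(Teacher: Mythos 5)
Your overall architecture (concavity and upper semicontinuity of an infimum of affine functionals) is the same as the paper's, but the way you handle the hard term $\mathbb{E}^{\bar{\mathbb{P}}}[\psi(X_T)]$ diverges, and that is where the proof breaks. You expand this term via the generator identity \eqref{eqn:generator} and the occupation measure $\theta$, producing the pairing $\int_O \Delta\psi\, d\theta$. But \eqref{eqn:generator} is only valid for $f\in\mathcal{H}_0$, i.e.\ for functions with $\Delta f\in C_b(O)\cap L^2(O;m)$; a general $\psi\in\mathcal{B}_D'$ is merely a weak/viscosity supersolution, so the representation of $F_{\bar{\mathbb{P}}}$ you write down is not established for the functions you must consider. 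You correctly identify this as ``the main obstacle,'' but the proposed workaround is not an argument: the one-sided bound $\Delta\psi_n\le D$ gives (at best, when the pairing is defined) a uniform \emph{lower} bound $-\int\Delta\psi_n\,d\theta\ge -D\,\theta(O)$, whereas upper semicontinuity of this term requires $\limsup_n\bigl(-\int\Delta\psi_n\,d\theta\bigr)\le -\int\Delta\psi\,d\theta$, which does not follow from a uniform one-sided bound, from \ref{assumptionL}, or from weak $\mathcal{H}$-convergence of the truncations when $\theta$ is singular with respect to $\gamma$. As stated, the step fails.

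The paper's proof avoids this pairing entirely. It writes
$$\mathbb{E}^{\mathbb{P}^\mu}\big[G_0^\psi\big]=\sup_{M\ge 0}\;\sup_{\beta>0}\;\sup_{\bar{\mathbb{P}}\in\mathcal{T}^\beta(\mu)}\mathbb{E}^{\bar{\mathbb{P}}}\big[\psi^M(X^\beta_T)-S^\beta_T\big],$$
where $X^\beta$ is the process killed at rate $\beta$ (the nontrivial inequality uses \ref{itm:cost_coercive} to ensure the optimal stopping time has finite expectation, hence is well approximated for small $\beta$). Each functional $u\mapsto\mathbb{E}^{\bar{\mathbb{P}}}[u(X^\beta_T)]$ is then continuous on $\mathcal{H}$ by exactly the balayage/$\mathcal{H}^*$ argument of Proposition \ref{prop:compact_semicontinuous} applied to the killed process, and it is evaluated only at the truncations $\psi^M\in\mathcal{H}$ --- which is precisely what the weak topology on $\mathcal{B}_D'$ controls. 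Thus $\mathbb{E}^{\mathbb{P}^\mu}[G_0^\psi]$ is a supremum of continuous affine functionals, hence convex and lower semicontinuous, and no distributional Laplacian ever meets an occupation measure. If you want to salvage your route, you would need to either restrict to $\psi\in\mathcal{H}_0$ and pass to limits with uniform estimates, or adopt the paper's killing-plus-truncation reduction; the occupation-measure identity cannot carry the argument on its own.
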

	\begin{proof}
		Concavity and upper-semicontinuity follow from the structure as the supremum over linear functionals.    We first note that 
		$$\E^{\mathbb{P}^\mu}[G_0^\psi] = \sup_{M\geq 0}\sup_{\beta> 0}\sup_{\bar{\mathbb{P}}\in \mathcal{T}^\beta(\mu)} \E^{\bar{\mathbb{P}}}[\psi^M(X^\beta_{T})-S^\beta_{T}],$$  
		 where $\mathcal{T}^\beta(\mu)$ is the set of stopping times of the process $X^\beta_t$, starting from the distribution $\mu$.
		The inequality $\geq$ is obvious.  For the other inequality we note that because \ref{itm:cost_coercive} the stopping time that achieves the value of $\E^{\mathbb{P}^\mu}[G_0^\psi]$ has finite expectation and thus is approximated well when $\beta$ is small.

		By exactly the same reason as in the proof of Proposition~\ref{prop:compact_semicontinuous},
		$$
			u\mapsto \E^{\bar{\mathbb{P}}}[u(X^\beta_T)]
		$$
		is a continuous linear functional on $\mathcal{B}_{D}'$ for any $\bar{\mathbb{P}}\in \mathcal{T}^\beta(\mu)$. 
		This shows that the map $ \mathcal{B}'_D \ni \psi \mapsto \E^{\bar{\mathbb{P}}}[\psi^M(X^\beta_{T})-S^\beta_{T}]$ is continuous, thus the map $\psi \mapsto \E^{\mathbb{P}^\mu}[G_0^\psi]$ is upper continuous in $\mathcal{B}'_D$.
		 Finally, continuity of $$\psi\mapsto \int_O \psi(x)\nu(dx) =- \int_O \Delta \psi (x) U^\nu_\gamma (x)\gamma(dx)$$ follows from		 \ref{assumptionMeasures_recurrent}.
		\end{proof}

\begin{proposition}\label{prop:dual_normalization_recurrent}
 		We suppose {\normalfont \ref{itm:lower_semicontinuity}, \ref{itm:cost_coercive}, \ref{itm:Feller}, \ref{itm:submartingale}, \ref{assumptionPoincare_recurrent}, \ref{assumptionL}, \ref{assumptionS}, and \ref{assumptionMeasures_recurrent}}. 
		Given $\psi\in LSC_b(O)$, we consider $\psi^{\text{\it max}}$ as in  Lemma {\normalfont\ref{lem:psi_maximization_recurrent}}.  Then in the sense of viscosity,
 		 \begin{align}\label{eqn:uniform_psi_bound_recurrent}
 			 \Delta \psi^{\text{\it max}}(y)\le D,
 		\end{align}
 		and $\psi^{\text{\it max}}-\int_O \overline \psi(x) \gamma(dx)\in \mathcal{B}_D'$.

 		Consequentially, 
 		$$
			\mathcal{D}_S(\mu,\nu) = \sup_{\psi\in \mathcal{B}_D'}U(\psi).
		$$
 	 	\end{proposition}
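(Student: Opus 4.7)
My plan is to mirror the proof of Proposition~\ref{prop:dual_normalization} with one structural modification: in the absence of a cemetery state, the anchoring lower bound for the dual potential must come from centering against the invariant measure $\gamma$ together with the maximum principle~\ref{assumptionBoundedProcess_recurrent}, rather than from the absorbing-state normalization $\psi\le 0$. Concretely, I would first establish the viscosity bound $\Delta\psi^{\text{\it max}}\le D$, then introduce the centered function
\[
\tilde\psi:=\psi^{\text{\it max}}-\int_O\psi^{\text{\it max}}(x)\gamma(dx),
\]
verify $\tilde\psi\in\mathcal{B}_D'$, and finally check that the dual value is nondecreased, giving the reduction $\mathcal{D}_S(\mu,\nu)=\sup_{\psi\in\mathcal{B}_D'}U(\psi)$.

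For the viscosity inequality, the proof of Proposition~\ref{prop:dual_normalization} transfers verbatim, since it is purely local in $O$. Given $\phi\in\mathcal{H}_0$ touching $\psi^{\text{\it max}}$ from below at $x\in O$, the definition of $\psi^{\text{\it max}}$ in Lemma~\ref{lem:psi_maximization_recurrent}, combined with the reachability of neighborhoods of $x$ by stopping times, produces for every $\epsilon,\delta>0$ a stopping time $\sigma$ and a positive-probability event $A\in\mathcal{F}_\sigma$ with $d(X_\sigma,x)<\delta$ on which $\phi(X_\sigma)+\epsilon\ge G_\sigma^\psi+S_\sigma$. The semi-supermartingale assumption~\ref{assumptionS} then yields
\[
\mathbb{E}^{\mathbb{P}^\mu}\big[\phi(X_{\sigma+s})\big|\mathcal{F}_\sigma\big]\le\phi(X_\sigma)+\epsilon+Ds
\]
on $A$, after which the generator identity~\eqref{eqn:generator} and continuity of $\Delta\phi$ deliver $\Delta\phi(x)\le D$ in the limit $\epsilon,\delta,s\to 0$.

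Next I verify $\tilde\psi\in\mathcal{B}_D'$. The centering makes the mean condition trivial, and an additive constant preserves both lower semicontinuity and the viscosity inequality, so $\Delta\tilde\psi\le D$. The key new step is the lower bound: assumption~\ref{assumptionBoundedProcess_recurrent} applied to $\tilde\psi/D$ (with $D=0$ handled separately, since a zero-mean viscosity superharmonic function must be constant) gives $\tilde\psi\ge -D\lambda$. Hence each truncation $\tilde\psi^M=\min\{\tilde\psi,M\}$ is bounded, inherits the viscosity inequality $\Delta\tilde\psi^M\le D$ (minimum of supersolutions with constants is again a supersolution), and by the variational/viscosity equivalence~\ref{assumptionL} lies in $\mathcal{H}$.

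Finally, $U(\tilde\psi)\ge U(\psi)$: additive constants do not change $U$ because $\nu$ is a probability measure and $G_0^{\psi+c}=G_0^\psi+c$, so the shift $c=\int_O\psi^{\text{\it max}}\,d\gamma$ cancels in both terms; and Lemma~\ref{lem:psi_maximization_recurrent} gives $\psi^{\text{\it max}}\ge\psi$ with identical value process, hence $\int\psi^{\text{\it max}}\,d\nu\ge\int\psi\,d\nu$ and $U(\psi^{\text{\it max}})\ge U(\psi)$. Taking suprema yields the stated reduction. The main obstacle is the careful truncation step, which is not present in Proposition~\ref{prop:dual_normalization}: one must ensure $\tilde\psi^M$ inherits the viscosity supersolution property and then correctly apply~\ref{assumptionL} to produce a genuine element of $\mathcal{H}$, together with the maximum-principle assumption~\ref{assumptionBoundedProcess_recurrent} that replaces the cemetery-state normalization used in the transient case.
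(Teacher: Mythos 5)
Your proposal is correct and follows essentially the same route as the paper, whose own proof is simply the remark that the argument of Proposition \ref{prop:dual_normalization} carries over; you have supplied exactly the recurrent-case adaptations that remark leaves implicit (centering by the $\gamma$-mean in place of the $\psi\le 0$ normalization, the truncation $\psi^M$, and \ref{assumptionL} applied to each truncation). One minor caveat: the lower bound via \ref{assumptionBoundedProcess_recurrent} is not among the stated hypotheses of this proposition, but it is not actually needed for membership in $\mathcal{B}_D'$, since $\psi^{\text{\it max}}\geq\psi$ with $\psi\in LSC_b(O)$ already gives a finite lower bound for each fixed $\psi$.
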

\begin{proof}
The proof is the same as Proposition \ref{prop:dual_normalization}.
\end{proof}

We now restate our main theorem on attainment of the dual problem, which follows immediately from the two preceding propositions.  
	\begin{theorem}\label{thm:strong_dual_recurrent} 
		We assume {\normalfont \ref{itm:lower_semicontinuity}, \ref{itm:cost_coercive}, \ref{itm:Feller}, \ref{itm:submartingale}, \ref{assumptionPoincare_recurrent}, \ref{assumptionL}, \ref{assumptionS},\ref{assumptionMeasures_recurrent}}, and {\normalfont \ref{assumptionBoundedProcess_recurrent}}.  Then there is $\psi^*\in \mathcal{B}_D'$ that maximizes $\mathcal{D}_S(\mu, \nu)$, that is,
		$$
			\int_{{O}}\psi^*(y)\nu(dy) -\E^{\mathbb{P}^\mu}\big[G^{\psi^*}_0\big]=\mathcal{D}_S(\mu,\nu).
		$$
	\end{theorem}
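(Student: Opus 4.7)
The plan is to mirror the proof of Theorem~\ref{thm:strong_dual} from the absorbing case, with $\mathcal{B}_D$ replaced by $\mathcal{B}_D'$, and to invoke Propositions \ref{prop:dual_normalization_recurrent} and \ref{prop:compact_semicontinuous_recurrent} as black boxes. Concretely, I would first use Proposition \ref{prop:dual_normalization_recurrent} to rewrite $\mathcal{D}_S(\mu,\nu)$ as the supremum of the concave functional $U(\psi)$ over $\psi\in\mathcal{B}_D'$ only, so that we may choose a maximizing sequence $\psi^i\in\mathcal{B}_D'$ with $U(\psi^i)\to\mathcal{D}_S(\mu,\nu)$. Without loss of generality we can take each $\psi^i$ to already be of the form ${\psi^i}^{\text{\it max}}$ normalized so that $\int_O\psi^i(x)\gamma(dx)=0$.

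Next I would appeal to the compactness statement established in the paragraph just after Definition~\ref{def:B_D_recurrent}: assumption \ref{assumptionBoundedProcess_recurrent} provides the uniform lower bound $\psi^i\ge -D\lambda$, which together with the identity
\begin{equation*}
\mathcal{E}(\psi^{i,M},\psi^{i,M})=\int_O\Delta\psi^{i,M}(x)\bigl(M-\psi^{i,M}(x)\bigr)\gamma(dx)
\end{equation*}
yields, for each fixed truncation level $M>0$, a uniform bound on $\|\psi^{i,M}\|_{\mathcal{H}}$. A diagonal argument across a countable sequence $M_k\to\infty$, combined with the Banach-Alaoglu theorem, then extracts a subsequence $\psi^{i_k}$ such that $\psi^{i_k,M}\rightharpoonup (\psi^\infty)^M$ weakly in $\mathcal{H}$ for every $M$, where $\psi^\infty\in\mathcal{B}_D'$. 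That the limit indeed lies in $\mathcal{B}_D'$ follows because the Dirichlet-form inequality $\Delta\psi\le D$ is preserved under weak limits (through the weak characterization of supersolutions in \ref{assumptionL}) and because $\int_O\psi^\infty(x)\gamma(dx)=0$ is a linear continuous constraint that passes to the limit.

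Finally, I would apply Proposition~\ref{prop:compact_semicontinuous_recurrent}, which asserts that $U$ is upper-semicontinuous on $\mathcal{B}_D'$ in this weak topology; therefore
\begin{equation*}
U(\psi^\infty)\ \ge\ \limsup_{k\to\infty}U(\psi^{i_k})\ =\ \mathcal{D}_S(\mu,\nu),
\end{equation*}
which is of course also an upper bound, so $\psi^\infty$ attains the dual maximum. Setting $\psi^*=\psi^\infty$ finishes the proof.

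The step I expect to be delicate, rather than hard in principle, is making the truncation/diagonalization compatible with the definition of the weak topology on $\mathcal{B}_D'$: since the topology is defined level-by-level in $M$, one must verify that a single $\psi^\infty\in\mathcal{B}_D'$ represents all the weak limits $(\psi^\infty)^M$ coherently, which hinges on the uniform lower bound from \ref{assumptionBoundedProcess_recurrent} and the monotone relationship $\psi^{i,M}\nearrow\psi^i$ as $M\to\infty$. Everything else is a direct transcription of the absorbing-case argument.
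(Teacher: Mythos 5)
Your proposal is correct and follows essentially the same route as the paper, which proves this theorem by declaring it identical to the argument for Theorem~\ref{thm:strong_dual}: restrict to $\mathcal{B}_D'$ via Proposition~\ref{prop:dual_normalization_recurrent}, extract a weakly convergent subsequence using the compactness of $\mathcal{B}_D'$, and conclude by the upper-semicontinuity of $U$ from Proposition~\ref{prop:compact_semicontinuous_recurrent}. Your explicit diagonalization over truncation levels $M$ fills in a detail the paper leaves implicit, but it is the same argument.
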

	\begin{proof}
		Again the proof is essentially identical to that of Theorem~\ref{thm:strong_dual}. 
	\end{proof}

	\subsection{When the cost is the expected stopping time}

We now give a counterpart of Proposition~\ref{prop:expected_time_transient}. A similar result has appeared  in \cite[Theorem 4.7]{baxter1976stopping}.
	\begin{theorem}\label{thm:ergodic_duality}
	We assume  {\normalfont \ref{itm:Feller}, \ref{assumptionPoincare_recurrent}, \ref{assumptionL}},  and {\normalfont \ref{assumptionMeasures_recurrent}}. We have that
	\begin{align}\label{eqn:ergodic_duality}
		\inf_{\bar{\mathbb{P}}\in \mathcal{T}(\mu,\nu)} \mathbb{E}^{\bar{\mathbb{P}}}\big[T\big] = \sup_{x\in O}\Big\{U_\gamma^\nu(x)-U_\gamma^\mu(x)\Big\}.
	\end{align}
	If we assume additionally {\normalfont \ref{assumptionBoundedProcess_recurrent}}, then the value of \ref{eqn:ergodic_duality} is finite.
\end{theorem}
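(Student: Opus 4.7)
The strategy is to apply strong duality (Theorem \ref{thm:weak_duality_recurrant}), which identifies the primal with the dual value $\mathcal{D}_S(\mu,\nu)$, and then compute the latter explicitly through the Green potentials $U_\gamma^\sigma$. Since $S_t=t$ verifies \ref{itm:cost_coercive} and \ref{assumptionS} with $D=1$, Proposition \ref{prop:dual_normalization_recurrent} reduces the dual to a supremum over $\psi\in LSC_b(O)\cap\mathcal{H}$ satisfying $\Delta\psi\le 1$ in the viscosity sense (equivalently, by \ref{assumptionL}, in the weak sense), and (by adding a constant) with $\int_O\psi\,d\gamma=0$. For such $\psi$, Dynkin's formula \eqref{eqn:generator} makes $M_t:=\psi(X_t)-\int_0^t\Delta\psi(X_s)\,ds$ a martingale dominating $\psi(X_t)-t$, so $(\psi,M)\in\mathcal{A}_S$ with $\E^{\mathbb{P}^\mu}[G_0^\psi]=\int\psi\,d\mu$, yielding
$$
\mathcal{D}_S(\mu,\nu)=\sup\Bigl\{\int_O\psi\,d(\nu-\mu):\psi\in\mathcal{H},\ \Delta\psi\le 1,\ \int_O\psi\,d\gamma=0\Bigr\}.
$$

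For the upper bound, combine the defining identity $\int_O\psi\,d\sigma=\mathcal{E}(\psi,U_\gamma^\sigma)$ (valid for $\sigma=\mu$ by \ref{assumptionMeasures_recurrent} and for $\sigma=\nu$ since $d\nu/d\gamma\in C_b(O)$) with the symmetry of $\mathcal{E}$:
$$
\int_O\psi\,d(\nu-\mu)=\mathcal{E}(\psi,U_\gamma^\nu-U_\gamma^\mu)=\int_O(-\Delta\psi)(U_\gamma^\nu-U_\gamma^\mu)\,d\gamma.
$$
The invariance property \eqref{eqn:invariant_measure} gives $\int_O(-\Delta\psi)\,d\gamma=0$, so together with $\Delta\psi\le 1$ the object $\rho_\psi:=(1-\Delta\psi)\,d\gamma$ is a non-negative Radon probability measure on $O$. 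Exploiting the normalization $\int_O U_\gamma^\sigma\,d\gamma=0$, the display simplifies to $\int_O(U_\gamma^\nu-U_\gamma^\mu)\,d\rho_\psi\le\sup_{x\in O}(U_\gamma^\nu-U_\gamma^\mu)(x)$, proving $\mathcal{D}_S(\mu,\nu)\le\sup_x(U_\gamma^\nu-U_\gamma^\mu)(x)$.

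For the matching lower bound, fix $x_0\in O$ and approximate $\delta_{x_0}$ by probability measures $\sigma_\epsilon$ with $d\sigma_\epsilon/d\gamma\in C_b(O)$ concentrating near $x_0$. The function $\psi_\epsilon:=U_\gamma^{\sigma_\epsilon}$ satisfies $\Delta\psi_\epsilon=1-d\sigma_\epsilon/d\gamma\le 1$, so it is dual admissible, and reciprocity of the Green kernel (immediate from $\mathcal{E}(U_\gamma^\sigma,U_\gamma^\tau)=\mathcal{E}(U_\gamma^\tau,U_\gamma^\sigma)$, which gives $\int U_\gamma^\tau\,d\sigma=\int U_\gamma^\sigma\,d\tau$) yields
$$
\int_O\psi_\epsilon\,d(\nu-\mu)=\int_O(U_\gamma^\nu-U_\gamma^\mu)\,d\sigma_\epsilon\ \xrightarrow[\epsilon\downarrow 0]{}\ (U_\gamma^\nu-U_\gamma^\mu)(x_0)
$$
at any continuity point of $U_\gamma^\nu-U_\gamma^\mu$. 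Taking a supremizing sequence of such $x_0$ in $O$ gives the converse bound. Finally, under \ref{assumptionBoundedProcess_recurrent} one has $U_\gamma^\mu\ge-\lambda$ (from $\Delta U_\gamma^\mu=1-m\le 1$), and the same principle applied to a rescaling of $-U_\gamma^\nu$ (whose Laplacian is bounded above by $\|d\nu/d\gamma\|_\infty-1<\infty$) produces an upper bound on $U_\gamma^\nu$; hence $\sup_x(U_\gamma^\nu-U_\gamma^\mu)(x)<\infty$.

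The two principal technical obstacles will be (i) justifying rigorously that $(1-\Delta\psi)\,d\gamma$ defines a bona fide non-negative Radon probability measure from only the weak/viscosity inequality $\Delta\psi\le 1$ (a distributional positivity argument, invoking \ref{assumptionL} to move between the senses), and (ii) handling the possible lack of continuity of $U_\gamma^\mu$ when approximating $\delta_{x_0}$ for the lower bound, since $\mu$ is not assumed to have a continuous density; this may require either a mollification step or approaching the supremum along the dense set of continuity points of $U_\gamma^\nu-U_\gamma^\mu$.
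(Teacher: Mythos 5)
Your proof is correct and follows essentially the same route as the paper's: reduce via Proposition \ref{prop:dual_normalization_recurrent} to potentials with $\Delta\psi\le 1$ and mean zero, identify these (through \eqref{eqn:invariant_measure}) with the potentials $U^\sigma_\gamma$ of probability measures $\sigma$, and use the symmetry of $\mathcal{E}$ to rewrite the dual value as $\int_O\big(U^\nu_\gamma-U^\mu_\gamma\big)\,d\sigma$, which is bounded by the supremum on the right-hand side of \eqref{eqn:ergodic_duality}. The only substantive difference is that you make explicit the attainment of the supremum by approximating $\delta_{x_0}$ with measures $\sigma_\epsilon$ having continuous densities (a step the paper leaves implicit, and which is carried out there only in the proof of Corollary \ref{cor:Necessary_sufficient}), so your write-up is if anything slightly more complete on that point.
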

\begin{proof}

 	By Proposition \ref{prop:dual_normalization_recurrent}, for the case $S_t=t$ we  can restrict the dual potential to $\psi\in \mathcal{B}_D'$ with $
 		D(x)=1.
 	$
As a consequence of (\ref{eqn:invariant_measure}) we have that $\Delta \psi(x)=1-s(x)$ for $s=\frac{d\sigma}{d\gamma}$ for some probability measure $\sigma$.  We have thus found that the dual problem can be restricted to potential functions, $\psi=U^\sigma_\gamma$ for any probability measure $\sigma$.   Also, as $\Delta \psi \le 1$, the value process is always given in this case by $G_t^\psi=\psi(X_t)-t$.

 	We then have that for each $\psi=U^\sigma_\gamma$,
 	\begin{align*}
 		\int_O \psi(x)\nu(dx)-\mathbb{E}^{\mathbb{P}^\mu}\big[G^\psi_0\big]=&\ \int_O U^\sigma_\gamma(x)\nu(dx)-\int_OU^\sigma_\gamma(x)\mu(dx)\\
 		=&\ \int_O\big(U^\nu_\gamma(x)-U^\mu_\gamma(x)\big)\sigma(dx)
		\\ \le & \  \sup_{x\in O}\Big\{U_\gamma^\nu(x)-U_\gamma^\mu(x)\Big\}.
 	\end{align*}
	 From this \eqref{eqn:ergodic_duality} follows.

 	 Finally, given \ref{assumptionBoundedProcess_recurrent}, we have that $U^\mu_\gamma$ is bounded below and  $U^\nu_\gamma$ is bounded above  (as $U^\nu_\gamma \in C_b$) thus the supremum is bounded.
\end{proof} 	

 	Furthermore, the point where the maximum is attained on the righthand side of (\ref{eqn:ergodic_duality}) defines a halting point, which characterizes the stopping times that minimize the expected time.
This result first appears in \cite[Theorem 5.1]{lovasz1995efficient} and we give a short proof for completeness.
\begin{corollary}\label{cor:Necessary_sufficient}  We assume  {\normalfont\ref{itm:Feller}, \ref{assumptionPoincare_recurrent}, \ref{assumptionL},  \ref{assumptionMeasures_recurrent}}, and {\normalfont \ref{assumptionBoundedProcess_recurrent}}, and 
	we suppose $\bar{\mathbb{P}}\in \mathcal{T}(\mu,\nu)$ and $\bar{x}\in O$. Then we have optimality of $\bar{\mathbb{P}}$ and $\bar{x}$ in {\normalfont (\ref{eqn:ergodic_duality})}
	if and only if   $\bar{\mathbb{P}}$-almost surely the local time of $X_t$ at $\bar{x}$ before $T$ is $0$. $\bar{\mathbb{P}}$ almost surely.
\end{corollary}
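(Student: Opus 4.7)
The plan is to identify the optimal dual potential from Theorem~\ref{thm:ergodic_duality} with the (formal) Green's function at $\bar{x}$, and then interpret the duality gap in terms of occupation times/local time at $\bar{x}$. As a starting point, recall from the proof of Theorem~\ref{thm:ergodic_duality} that the dual potentials for the cost $S_t = t$ are parametrized by probability measures $\sigma$ on $O$ via $\psi = U^\sigma_\gamma$, and the supremum over $\sigma$ reduces to the pointwise supremum over $\bar{x}\in O$, formally corresponding to $\sigma = \delta_{\bar{x}}$ (which however fails to have a bounded density).

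The first step is therefore to regularize. Fix $\bar{x}\in O$ and take probability measures $\sigma^\epsilon$ with densities $s^\epsilon := d\sigma^\epsilon/d\gamma\in C_b(O)$ that concentrate at $\bar{x}$ as $\epsilon\to 0$. Set $\psi^\epsilon := U^{\sigma^\epsilon}_\gamma$, so by \eqref{eq:Usigma-gamma} we have $\Delta\psi^\epsilon = 1 - s^\epsilon$. For any $\bar{\mathbb{P}}\in\mathcal{T}(\mu,\nu)$, applying the generator identity \eqref{eqn:generator} to $\psi^\epsilon$ and using $X_0\sim\mu$, $X_T\sim\nu$ gives
\begin{align*}
 \int_O \psi^\epsilon\, d(\nu-\mu) = \mathbb{E}^{\bar{\mathbb{P}}}\Big[\int_0^T\Delta\psi^\epsilon(X_t)\,dt\Big] = \mathbb{E}^{\bar{\mathbb{P}}}[T] - \mathbb{E}^{\bar{\mathbb{P}}}\Big[\int_0^T s^\epsilon(X_t)\,dt\Big].
\end{align*}

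The second step is to pass to the limit $\epsilon\to 0$. By standard potential theory under \ref{assumptionL} and \ref{assumptionBoundedProcess_recurrent}, $\psi^\epsilon \to U^{\delta_{\bar{x}}}_\gamma$ pointwise away from $\bar x$ in a way that yields $\int_O \psi^\epsilon\, d(\nu-\mu) \to U^\nu_\gamma(\bar{x})-U^\mu_\gamma(\bar{x})$ (using $d\nu/d\gamma\in C_b(O)$ from \ref{assumptionMeasures_recurrent} and the self-adjointness $\int\psi^\epsilon\,d\nu = \int U^\nu_\gamma\,d\sigma^\epsilon$). For the remaining term, the occupation-time representation
\begin{align*}
 \int_0^T s^\epsilon(X_t)\, dt \longrightarrow L^{\bar{x}}_T \quad \bar{\mathbb{P}}\text{-almost surely (or in expectation)},
\end{align*}
defines (or identifies) the local time of $X$ at $\bar{x}$ on the interval $[0,T]$ as a nonnegative random variable. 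Combining these limits yields the key identity
\begin{align*}
\mathbb{E}^{\bar{\mathbb{P}}}[T] - \bigl(U^\nu_\gamma(\bar{x})-U^\mu_\gamma(\bar{x})\bigr) = \mathbb{E}^{\bar{\mathbb{P}}}[L^{\bar{x}}_T] \geq 0.
\end{align*}

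The third step extracts the corollary from this identity. The inequality $\geq$ already re-proves the duality gap in \eqref{eqn:ergodic_duality}. If $\bar{\mathbb{P}}$ minimizes and $\bar{x}$ maximizes in \eqref{eqn:ergodic_duality}, then by Theorem~\ref{thm:ergodic_duality} the left-hand side equals $0$, so $\mathbb{E}^{\bar{\mathbb{P}}}[L^{\bar{x}}_T]=0$, and nonnegativity of local time forces $L^{\bar{x}}_T = 0$ $\bar{\mathbb{P}}$-a.s. Conversely, if $L^{\bar{x}}_T=0$ $\bar{\mathbb{P}}$-a.s., then $\mathbb{E}^{\bar{\mathbb{P}}}[T] = U^\nu_\gamma(\bar{x})-U^\mu_\gamma(\bar{x})$; since the primal $\inf$ dominates the first quantity and the dual $\sup$ dominates the second, and they are equal by Theorem~\ref{thm:ergodic_duality}, both $\bar{\mathbb{P}}$ and $\bar{x}$ must be optimal.

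The main obstacle is the rigorous justification of $\int_0^T s^\epsilon(X_t)\, dt \to L^{\bar{x}}_T$ in the generality of the paper, which requires that the process $X$ admits a well-defined local time at points (automatic for reflecting/Riemannian Brownian motion but not in full abstraction). The statement should therefore be understood in the setting where the ergodic examples of the appendix (Brownian motion on a bounded smooth domain or a closed Riemannian manifold) apply; in those settings the occupation-density formula gives both convergence of $\psi^\epsilon$ to the Green's function and convergence of the occupation integrals to the local time, closing the argument.
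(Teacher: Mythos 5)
Your proposal is correct and follows essentially the same route as the paper's proof: regularize $\delta_{\bar{x}}$ by densities $s^\epsilon$, apply the generator identity to $\psi^\epsilon = U^{\sigma^\epsilon}_\gamma$ to get $\mathbb{E}^{\bar{\mathbb{P}}}[T] = \int \psi^\epsilon\, d(\nu-\mu) + \mathbb{E}^{\bar{\mathbb{P}}}[\int_0^T s^\epsilon(X_t)\,dt]$, pass to the limit identifying the last term as the local time at $\bar{x}$, and read off optimality from Theorem \ref{thm:ergodic_duality}. The technical caveat you flag about when occupation integrals converge to a genuine local time is also left implicit in the paper, so you are not missing anything the authors supply.
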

\begin{proof}
	We let $\psi^\epsilon=U^{\sigma^\epsilon}_\gamma$  as defined in \eqref{eq:Usigma-gamma} for $\sigma^\epsilon$ a probability measure with $\frac{d\sigma^\epsilon}{d\gamma}=s^\epsilon\in C_b(O)$, approximating $\delta_{\bar{x}}$ as $\epsilon\rightarrow 0$.
	 Then we use the definition of $\psi^\epsilon$ and the generator to obtain
	\begin{align*}
		\mathbb{E}^{\bar{\mathbb{P}}}\big[{T}\big]=&\ \mathbb{E}^{\bar{\mathbb{P}}}\Big[\int_0^{{T}} \big(\Delta \psi^\epsilon(X_t)+s^\epsilon(X_t)\big)dt\Big]\\
		=&\ \int_O \psi^\epsilon(x)\nu(dx)-\int_O \psi^\epsilon(x)\mu(dx)+\mathbb{E}^{\bar{\mathbb{P}}}\Big[\int_0^{{T}} s^\epsilon(X_t)dt\Big].
	\end{align*}
	Taking the limit as $\epsilon\rightarrow 0$ we have
	\begin{align*}
		\mathbb{E}^{\bar{\mathbb{P}}}\big[{T}\big]=&\ U_\gamma^\nu(\bar{x})-U_\gamma^\mu(\bar{x})+\lim_{\epsilon\rightarrow 0}\mathbb{E}^{\bar{\mathbb{P}}}\Big[\int_0^{{T}} s^\epsilon(X_t)dt\Big].
	\end{align*}
	The final term can be identified as the local time of $X_t$ at $\bar{x}$ for $t\leq T$.
	It follows from  Theorem \ref{thm:ergodic_duality} that for the optimizers $\bar{\mathbb{P}}$ and $\bar{x}$, we have zero local time.  Conversely, if the local time is $0$ then equality holds in (\ref{eqn:ergodic_duality}) so that optimality of $\bar{\mathbb{P}}$ and $\bar{x}$ follows.
\end{proof}

\section{Path Monotonicity}

 		The value function and dynamic programming principle is closely related the path-monotonicity principle of \cite{beiglboeck2017optimal}, analogous to the relationship between convex functions and cyclic-monotonicity.  Indeed, the dual attainment and verification results of our paper recover the result that the support of the minimizing stopping time satisfies the path monotonicity principle. More precisely, the set 
		$$R=\{(\omega,t);\ \psi(X_t(\omega))=G_t^\psi(\omega)- S_t (\omega) \}$$ 
		satisfies the path-monotonicity property; Definition {\normalfont 1.5} of \cite{beiglboeck2017optimal}.   The following is an extension of \cite[Theorem B.1]{GKP-Monge}, where it was proved for the case $X_t=B_t$ the Brownian motion. 

 		To show that $R$ satisfies the path-monotonicity property, we prove that there is no a stop-go pair in the sense of Definition 1.4 of \cite{beiglboeck2017optimal}. For this, we suppose there  is a path $\omega_1$ that continues optimally at $(\omega_1,t_1)$ (i.e.,\ $(\omega_1,t_1)\in R^<$ in the notation of \cite{beiglboeck2017optimal}), that is, there is a stopping-time $\sigma\in \mathcal{S}_t$, with   $\sigma\not=0$, so that
 		\begin{align*}
 			\mathbb{E}^{\mathbb{P}^\mu}\big[S_{t_1+\sigma}\big|\mathcal{F}_{t_1}\big](\omega_1)
 			=&\ -G_{t_1}^\psi(\omega_1)+\mathbb{E}^{\mathbb{P}^\mu}\big[\psi(X_{t_1+\sigma})\big|\mathcal{F}_{t_1}\big](\omega_1),
 		\end{align*}
 		 and another pair $(\omega_2,t_2)\in R$ that stops optimally so that
 		\begin{align*}
 			S_{t_2}(\omega_2)=&\ -G_{t_2}^\psi(\omega_2)+\psi(X_{t_2}(\omega_2)), 
 		\end{align*}
		and $$X_{t_1}(\omega_1)=X_{t_2}(\omega_2).$$
 		On the other hand, from the definition of $G^\psi_t$ we have the inequalities
 		\begin{align*}
 			S_{t_1}(\omega_1)\geq& -G^\psi_{t_1}(\omega_1)+\psi(X_{t_1}(\omega_1))
 		\end{align*}
 		and
 		\begin{align*}
 			\mathbb{E}^{\mathbb{P}^\mu}\big[S_{t_2+\sigma}\big|\mathcal{F}_{t_2}\big](\omega_2)
 			\geq& -G^\psi_{t_2}(\omega_2)+\mathbb{E}^{\mathbb{P}^\mu}\big[\psi(X_{t_2+\sigma})\big|\mathcal{F}_{t_2}\big](\omega_2).
 		\end{align*}
 		Notice that  from the Markov property of $X_t$ and $X_{t_1}(\omega_1)=X_{t_2}(\omega_2)$, 		
\begin{align*}
 \mathbb{E}^{\mathbb{P}^\mu}\big[\psi(X_{t_1+\sigma})\big|\mathcal{F}_{t_1}\big](\omega_1) -
 \psi(X_{t_1}(\omega_1)) = 
 \mathbb{E}^{\mathbb{P}^\mu}\big[\psi(X_{t_2+\sigma})\big|\mathcal{F}_{t_2}\big](\omega_2) - 
 \psi(X_{t_2}(\omega_2)).
\end{align*}
		Combining all these
		we get that
 		$$
 		\mathbb{E}^{\mathbb{P}^\mu}\big[S_{t_1+\sigma}\big|\mathcal{F}_{t_1}\big](\omega_1)+S_{t_2}(\omega_2)\leq S_{t_1}(\omega_1)+\mathbb{E}^{\mathbb{P}^\mu}\big[S_{t_2+\sigma}\big|\mathcal{F}_{t_2}\big](\omega_2).
 		$$
 		Since $\sigma\not=0$, this shows that $(\omega_1,t_1)$ and $(\omega_2,t_2)$ cannot be a stop-go pair,
		which implies the path-monotonicity principle for $R$.

\bibliography{OptimalStoppingSubmartingale}
\bibliographystyle{plain}

\end{document}